\newtheorem{theorem}{Theorem}
\newtheorem{lemma}{Lemma}
\newtheorem{corollary}[theorem]{Corollary}
\newtheorem{remark}{Remark}
\newtheorem*{definition}{Definition}
\newtheorem{conjecture}{Conjecture}
\renewcommand*\backref[1]{}
\renewcommand*\backrefalt[4]{ \ifcase #1 \or (cited on page #2) \else (cited on pages #2) \fi}
\newcommand{\be}{\begin{equation}}
\newcommand{\ee}{\end{equation}}
\newcommand{\bea}{\begin{eqnarray}}
\newcommand{\eea}{\end{eqnarray}}
\newcommand{\vs}{\vspace{0.5cm}}
\newcommand{\vsv}{\vspace{0.12cm}}
\def\XXint#1#2#3{{\setbox0=\hbox{$#1{#2#3}{\int}$ }
\vcenter{\hbox{$#2#3$ }}\kern-.6\wd0}}
\begin{document}

\title[On Strominger K\"ahler-like manifolds]{On Strominger K\"ahler-like manifolds with degenerate torsion }

\author{Shing-Tung Yau} \thanks{The research of Yau is partially supported by NSG grants PHY-0714648 and DMS-1308244. Zhao is partially supported by NSFC with Grant No.11801205 and China Scholarship Council to Ohio State University. Zheng is partially supported by NSFC grant 12071050 and 12141101, a Chongqing grant cstc2021ycjh-bgzxm0139, and the 111 Project D21024.}

\address{Shing-Tung Yau. Department of Mathematics, Harvard University, Cambridge, MA 02138, USA}
\email{{yau@math.harvard.edu}}

\author{Quanting Zhao} \thanks{}
\address{Quanting Zhao. School of Mathematics and Statistics \& Hubei Key Laboratory of Mathematical Sciences, Central
China Normal University, Wuhan 430079, China.}

\email{{zhaoquanting@126.com; zhaoquanting@mail.ccnu.edu.cn}}

\author{Fangyang Zheng} \thanks{}
\address{Fangyang Zheng. School of Mathematical Sciences, Chongqing Normal University, Chongqing 401331, China}
\email{{20190045@cqnu.edu.cn; franciszheng@yahoo.com}}

\subjclass[2010]{53C55 (primary), 53C05 (secondary)}
\keywords{K\"ahler-like; Strominger connection; Chern connection; Riemannian connection; pluriclosed metric;
locally conformal K\"ahler; Sasakian manifold; degenerate torsion}

\begin{abstract}
In this paper, we study a special type of compact Hermitian manifolds that are Strominger K\"ahler-like, or SKL for short. This condition means that the Strominger connection (also known as Bismut connection) is K\"ahler-like, in the sense that its curvature tensor obeys all the symmetries of the curvature of a K\"ahler manifold. Previously, we have shown that any SKL manifold $(M^n,g)$ is always pluriclosed, and when the manifold is compact and $g$ is not K\"ahler, it can not admit any balanced or strongly Gauduchon (in the sense of Popovici) metric. Also, when $n=2$, the SKL condition is equivalent to the Vaisman condition. In this paper, we give a classification for compact non-K\"ahler SKL manifolds in dimension $3$ and those with degenerate torsion in higher dimensions. We also present some properties about SKL manifolds in general dimensions, for instance, given any compact non-K\"ahler SKL manifold, its K\"ahler form represents a non-trivial Aeppli cohomology class, the metric can never be locally conformal K\"ahler when $n\geq 3$, and the manifold does not admit any Hermitian symplectic metric.
\end{abstract}

\maketitle

\tableofcontents

\markleft{Yau, Zhao and Zheng}
\markright{Strominger K\"ahler-like}

\section{Introduction and statement of results}

For a Hermitian manifold $(M^n,g)$, its {\em Strominger connection} $\nabla^s$ is the unique connection on $M$ that is Hermitian (namely, $\nabla^sg=0$, $\nabla^s J =0$) and has totally skew-symmetric torsion tensor. Its existence and explicit expression first appeared in Strominger's seminal paper \cite{Strominger} in 1986, where he called it the H-connection. Three years later, Bismut \cite{Bismut} discovered the connection independently and used it in his study of local index theorems, which leads to the name {\em Bismut connection} in many literature. Since Strominger's paper was published earlier than Bismut's, it might be more appropriate to call it Strominger connection, and we shall do so from now on. Note that the connection also appeared implicitly earlier (see \cite{Yano}) and in some literature it was also called the {\em KT connection} (K\"ahler with torsion) or {\em characteristic connection}. 

Since the need of non-K\"ahler Calabi-Yau spaces in string theory, this connection has been receiving more and more attention from geometers and mathematical physicists alike. We refer the readers to \cite{AI}, \cite{AU}, \cite{EFV}, \cite{FV}, \cite{Fu},  \cite{Fu-Li-Yau}, \cite{Fu-Wang-Wu}, \cite{Fu-Yau}, \cite{Fu-Zhou},  \cite{GatesHR}, \cite{Gauduchon1}, \cite{IvanovP}, \cite{KYZ}, \cite{Li-Yau}, \cite{Liu-Yang}, \cite{Liu-Yang1}, \cite{Liu-Yang2},   \cite{S18}, \cite{STW}, \cite{Tosatti}, \cite{Tseng-Yau}, \cite{VYZ}, \cite{YZ1}, \cite{Zheng1} and the references therein for more discussions on Strominger connection, pluriclosed metric and related topics.

Throughout this paper, we will call a Hermitian manifold $(M^n,g)$ whose Strominger connection is K\"ahler-like\footnote{The definition of a metric connection on a Hermitian manifold being K\"ahler-like is given by Angella, Otal, Ugarte and Villacampa in \cite{AOUV}. For the special case of Riemannian and Chern connections, it was studied by Bo Yang and the third named author in \cite{YZ}. The concept originated from the earlier works of Alfred Gray \cite{Gray} and others in 1960s.} a {\em Strominger K\"ahler-like} manifold, or a {\em SKL} manifold in short. The structure equations, Bianchi identities and notations alike used in \cite[Section 2]{ZZ} will also be applied here to investigate the SKL geometry. In our previous work \cite{ZZ}, we have shown that a Hermitian manifold $(M^n,g)$ is SKL if and only if the metric is pluriclosed, namely, $\partial \overline{\partial}\omega_g=0$ where $\omega_g$ is the K\"ahler form of $g$, and the torsion is $\nabla^s$-parallel. Pluriclosed metrics (also known as {\em strong K\"ahler with torsion,} or SKT metric) are widely studied in recent years, and we refer the readers to the excellent survey paper by Fino and Tomassini \cite{FinoTomassini} for more information on this type of special Hermitian metrics.

It has been proved in \cite[Theorem 3 and 4]{ZZ} that, if $(M^n,g)$ is a compact SKL manifold with $g$ not K\"ahler, then $M^n$ cannot admit any balanced metric, or more generally, it can not admit any strongly Gauduchon metric (in the sense of Popovici \cite{Popovici}). Furthermore, it has been shown in \cite[Theorem 2]{ZZ} that, when $n=2$, the SKL condition is equivalent to the Vaisman condition, which means that the Lee form is parallel under the Riemannian (Levi-Civita) connection. Compact Vaisman surfaces were fully classified by the beautiful work of Belgun \cite{Belgun} and they are non-K\"ahler properly elliptic surfaces, Kodaira surfaces, and Class 1 or elliptic Hopf surfaces \cite{Kodaira}.

The first result of this paper is the following observation. Recall that a Hermitian metric $\omega$ is called {\em Hermitian symplectic,} if there exists a $(2,0)$-form $\alpha$ on $M^n$ such that $\partial \omega = -\overline{\partial} \alpha$ and $\partial \alpha =0$. Equivalently, there exists a $(2,0)$-form $\alpha$ on the manifold such that $d(\alpha + \omega + \overline{\alpha })=0$. Such a metric is always pluriclosed, namely, $\partial\overline{\partial}  \omega=0$.

\begin{theorem} \label{thm1}
Let $(M^n,g)$ be a compact SKL manifold with $g$ not K\"ahler. Then $\omega_g$ represents a non-trivial Aeppli cohomology class in $H^{1,1}_A(M)$. Furthermore, $M^n$ does not admit any Hermitian symplectic metric. In particular, $g$ is a pluriclosed but not Hermitian symplectic metric and $M^n$ does not satisfy the $\partial\overline{\partial}$-Lemma.
\end{theorem}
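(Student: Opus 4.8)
The plan is to deduce the entire statement from a single geometric implication: \emph{every compact Hermitian manifold carrying a Hermitian symplectic metric also carries a strongly Gauduchon metric} (recall a metric $\omega$ is strongly Gauduchon when $\partial\omega^{n-1}$ is $\overline\partial$-exact). Granting this, the non-existence of Hermitian symplectic metrics follows at once from the non-existence of strongly Gauduchon metrics established in \cite[Theorems 6 and 7]{ZZ}, and both cohomological assertions then reduce to two short identities.

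For the main step, suppose $\omega$ is Hermitian symplectic, so that $\rho=\alpha+\omega+\overline\alpha$ is a real $d$-closed $2$-form whose $(1,1)$-part $\omega$ is positive. Then $\rho^{n-1}$ is real and $d$-closed, and I would decompose it by bidegree as $\rho^{n-1}=\Psi^{n,n-2}+\Psi^{n-1,n-1}+\Psi^{n-2,n}$. Comparing the $(n,n-1)$-components of $d\rho^{n-1}=0$ yields $\partial\Psi^{n-1,n-1}=-\overline\partial\Psi^{n,n-2}$, so $\partial\Psi^{n-1,n-1}$ is $\overline\partial$-exact. Next I would expand $\Psi^{n-1,n-1}=\sum_{b\ge 0}c_{b}\,\omega^{\,n-1-2b}\wedge\alpha^{b}\wedge\overline\alpha^{b}$ with combinatorial coefficients $c_b>0$ (the three factors are all of even degree, hence commute, so no signs intervene) and check that each summand is a positive $(n-1,n-1)$-form: $\alpha^{b}\wedge\overline\alpha^{b}$ is a positive $(2b,2b)$-form, $\omega^{\,n-1-2b}$ is strongly positive, and the $b=0$ term is the strictly positive $\omega^{\,n-1}$. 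Thus $\Psi^{n-1,n-1}$ is strictly positive, so by Michelsohn's classical result that every strictly positive $(n-1,n-1)$-form is the $(n-1)$-st power of a unique Hermitian metric, there is $\widetilde\omega$ with $\widetilde\omega^{\,n-1}=\Psi^{n-1,n-1}$; then $\partial\widetilde\omega^{\,n-1}$ is $\overline\partial$-exact, i.e.\ $\widetilde\omega$ is strongly Gauduchon.

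For the consequences, since $(M^n,g)$ is compact, non-K\"ahler and SKL, \cite[Theorems 6 and 7]{ZZ} forbid any strongly Gauduchon metric, so by the main step $M^n$ admits no Hermitian symplectic metric. Now $g$ is pluriclosed by \cite{ZZ}, so $\omega_g$ defines a class in $H^{1,1}_A(M)$; if it vanished I could write $\omega_g=\partial\overline\psi+\overline\partial\psi$ for a $(1,0)$-form $\psi$ (symmetrising any representative using that $\omega_g$ is real), and then $\alpha:=\partial\psi$ satisfies $\partial\alpha=0$ and $\overline\partial\alpha=-\partial\overline\partial\psi=-\partial\omega_g$, exhibiting $\omega_g$ as Hermitian symplectic --- impossible. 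Hence $[\omega_g]_A\neq 0$. Similarly, if the $\partial\overline\partial$-Lemma held, then $\partial\omega_g$ --- a $d$-closed, $\partial$-exact $(2,1)$-form --- would equal $\partial\overline\partial\gamma$ for some $(1,0)$-form $\gamma$, and $\alpha:=\partial\gamma$ would again make $\omega_g$ Hermitian symplectic, a contradiction; thus the $\partial\overline\partial$-Lemma fails. In particular $g$ is pluriclosed but not Hermitian symplectic.

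The main obstacle is the positivity of $\Psi^{n-1,n-1}$: one must verify that passing from the closed form $\rho$ to the $(n-1,n-1)$-part of $\rho^{n-1}$ leaves a strictly positive form, i.e.\ that the cross terms $\omega^{\,n-1-2b}\wedge\alpha^{b}\wedge\overline\alpha^{b}$ carry the correct signs and do not spoil positivity, which is exactly what makes Michelsohn's $(n-1)$-st root available. Everything else --- the bidegree bookkeeping for $d\rho^{n-1}=0$ and the two identities producing the Hermitian symplectic structure --- is routine.
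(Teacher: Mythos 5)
Your proposal is correct and follows essentially the same route as the paper: the same lemma that a Hermitian symplectic metric yields a strongly Gauduchon one (via the bidegree decomposition of $\rho^{n-1}$ and the positivity of its $(n-1,n-1)$-part), combined with the non-existence of strongly Gauduchon metrics from \cite[Theorems 6 and 7]{ZZ}, and the same construction of $\alpha=\partial\psi$ from a trivial Aeppli class. Your explicit verification of the $\partial\overline{\partial}$-Lemma consequence is a slightly more detailed version of what the paper merely asserts, but the argument is identical in substance.
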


Here $H^{p,q}_A(M)$ stands for the $(p,q)$-Aeppli cohomology group, which is defined by
$$ H^{p,q}_A(M) = \frac{\mbox{ker}( \partial \overline{\partial}: A^{p,q} \rightarrow A^{p\!+\!1,q\!+\!1})} {\partial A^{p\!-\!1,q} + \overline{\partial} A^{p,q\!-\!1}}$$
where $A^{p,q}$ is the space of all complex valued $(p,q)$-forms on $M^n$. Either by the fact that a compact non-K\"ahler SKL manifold does not satisfy the $\partial \overline{\partial}$-Lemma, or by the non-existence of any balanced metric on such a manifold, we conclude that

\begin{remark}
A compact complex manifold in the Fujiki class (namely it is bimeromorphic to a compact K\"ahler manifold) does not
admit any non-K\"ahler SKL metric. In particular, the Kodaira dimension $\mbox{kod}(M^n)$ of a compact non-K\"ahler SKL manifold $(M^n,g)$ can never be equal to $n$.
\end{remark}

Note that in dimension $2$, compact non-K\"ahler SKL surfaces are precisely the compact Vaisman surfaces with odd $b_1$, which are classified by Belgun in \cite{Belgun}. Their Kodaira dimensions can already be $1$, $0$, or $-\infty$. Another general property about SKL manifolds is

\begin{theorem} \label{thm2}
Let $(M^n,g)$ be a SKL manifold with $g$ not K\"ahler. Then there exists a holomorphic vector field $X$ on $M$ which is parallel with respect to the Strominger connection $\nabla^s$ of $g$. In particular, the norm $|X|$ is a positive constant and the Euler number of $M$ is zero.
\end{theorem}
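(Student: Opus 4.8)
The plan is to build $X$ directly out of the torsion. Fix a local unitary frame $\{e_i\}$ of $T^{1,0}M$ with dual coframe $\{\varphi^i\}$ and write the Chern torsion as $T=\sum T^i_{jk}\,\varphi^j\wedge\varphi^k\otimes e_i$ with $T^i_{jk}=-T^i_{kj}$, following the structure equations of \cite{ZZ}. Let $\eta=\sum_k\eta_k\varphi^k$ be the associated torsion (Lee) $(1,0)$-form, $\eta_k=\sum_i T^i_{ik}$, and let $X=\eta^\sharp$ be its metric dual, a $(1,0)$-vector field; in the unitary frame $X=\sum_k\overline{\eta_k}\,e_k$. Since $(M^n,g)$ is SKL, by \cite{ZZ} the metric is pluriclosed and the torsion is $\nabla^s$-parallel, $\nabla^sT=0$; as $g$ is not K\"ahler, $T\not\equiv 0$.

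The first step is to check that $X$ is $\nabla^s$-parallel. Because $\eta$ is obtained from $T$ by a metric contraction and both $g$ and $T$ are $\nabla^s$-parallel, the trace $\eta$ is $\nabla^s$-parallel, hence so is $X=\eta^\sharp$. Consequently $d|X|^2=g(\nabla^sX,\overline{X})+g(X,\overline{\nabla^sX})=0$, so $|X|$ is constant along $M$.

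Next comes the nonvanishing, which I expect to be the main obstacle. If $\eta\equiv 0$ then $g$ is balanced, and I would argue that balanced together with pluriclosed and $\nabla^sT=0$ forces, via the first Bianchi identity for the K\"ahler-like Strominger curvature and the torsion identities of \cite{ZZ}, that $T\equiv 0$, i.e. that $g$ is K\"ahler, contrary to hypothesis. The crux is to extract from the SKL relations a pointwise bound of $|T|^2$ by a multiple of $|\eta|^2$, so that the balanced case collapses to the K\"ahler one without any appeal to compactness. Granting this, $\eta\not\equiv 0$; since $|X|$ is constant, $X$ is then nowhere zero and $|X|$ is a positive constant.

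It remains to show that $X$ is holomorphic, i.e. that $\nabla^{c,(0,1)}X=\overline{\partial}\,X=0$. Writing $\nabla^s=\nabla^c+\gamma$ with $\gamma$ the torsion-built difference tensor, the identity $\nabla^sX=0$ gives $\overline{\partial}\,X=-\gamma^{(0,1)}(X)$, where $\gamma^{(0,1)}(X)$ is a contraction of the torsion against $X$. The required vanishing amounts to an identity of the form $\sum_j \eta_j\,T^j_{ik}=0$, that is, that $X$ lies in the kernel of the torsion; this is precisely a manifestation of the degeneracy of the torsion in the SKL setting and should follow from the parallelism of $T$ together with the Bianchi identities in \cite{ZZ}. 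With these three properties in hand, $X$ is a nowhere-zero $\nabla^s$-parallel holomorphic vector field. Finally, on a compact $M$ the real vector field $\re(X)=\tfrac12(X+\overline{X})$ is nowhere zero, so by the Poincar\'e--Hopf theorem the Euler number of $M$ vanishes.
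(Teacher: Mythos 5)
Your construction is exactly the paper's: the same vector field $X_{\!\eta}=\sum_r\overline{\eta}_r e_r$, parallelism from $\nabla^s\eta=0$, holomorphy from the contraction identity $\sum_r\eta_r T^r_{ik}=0$ (the paper's identity (\ref{etaT})), and Poincar\'e--Hopf at the end. The one step you leave conditional --- the pointwise bound of $|T|^2$ by a multiple of $|\eta|^2$ --- is not really an obstacle: setting $j=i$, $\ell=k$ in the SKL identity (\ref{eq:P}) and summing over $i,k$ gives $|T|^2+2|\eta|^2-2|T|^2=0$, i.e.\ the exact identity $|T|^2=2|\eta|^2$ (equivalently, take the trace of $B=\phi+\phi^{\ast}$ from (\ref{BPhi})), so $\eta\equiv 0$ would force $g$ K\"ahler and the nonvanishing follows; the paper leaves this point implicit. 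With that filled in, your argument is complete and coincides with the paper's.
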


Our next observation is about the uniqueness of SKL metrics within a conformal class. Note that since SKL metrics are Gauduchon by \cite[Proposition 3]{ZZ}, so when $M^n$ is compact, any SKL metric on $M^n$ will be unique (up to constant multiple) within its conformal class. The same is true for Riemannian K\"ahler-like or Chern K\"ahler-like metrics as proved in \cite[Theorem 4]{YZ}.  When $M^n$ is not compact, however, Riemannian K\"ahler-like or Chern K\"ahler-like metrics are no longer unique within a conformal class, but SKL metrics are, provided that the dimension is at least $3$:

\begin{theorem} \label{thm3}
Let $(M^n,g)$ be any Hermitian manifold with $n\geq 3$. Then within the conformal class of $g$, there is at most one SKL metric, up to constant multiples.
\end{theorem}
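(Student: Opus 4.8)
The plan is to assume that $\tilde g=e^{2f}g$ is a second SKL metric in the conformal class, with $f$ a smooth real function, and to show that $f$ must be constant (so that $\tilde g$ is a constant multiple of $g$). The key idea is to track the \emph{trace-free part} of the Chern torsion under the conformal change. Fix a local unitary coframe $\{\varphi^i\}$ for $g$, write $T^i_{jk}$ for the Chern torsion of $g$ and $\tau_k=\sum_i T^i_{ik}$ for its trace, and set $(T^{\circ})^i_{jk}=T^i_{jk}+\tfrac1{n-1}(\tau_j\delta^i_k-\tau_k\delta^i_j)$, the $U(n)$-irreducible trace-free component. Computing $d\tilde\varphi^i$ in the rescaled unitary coframe $\tilde\varphi^i=e^f\varphi^i$ and writing $\partial f=\sum_k f_k\varphi^k$, one finds $\tilde T^i_{jk}=e^{-f}(T^i_{jk}+f_j\delta^i_k-f_k\delta^i_j)$ and $\tilde\tau_k=e^{-f}(\tau_k-(n-1)f_k)$. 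The pure-trace correction cancels exactly, so that $(\tilde T^{\circ})^i_{jk}=e^{-f}(T^{\circ})^i_{jk}$; thus the trace-free torsion has conformal weight $-1$, independently of all derivatives of $f$.

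Next I would bring in the SKL hypothesis. By \cite{ZZ}, SKL is equivalent to being pluriclosed with $\nabla^s$-parallel torsion; in particular the Chern torsion of $g$ is $\nabla^s$-parallel, hence so is its trace-free part, since the defining projection is built from $g$ and $J$, both of which $\nabla^s$ preserves. Consequently $A:=|T^{\circ}|^2_g$ is $\nabla^s$-parallel, hence constant, and likewise $B:=|\tilde T^{\circ}|^2_{\tilde g}$ is constant for $\tilde g$. Evaluating the latter in the $\tilde g$-unitary coframe $\tilde\varphi^i$ and using the weight-$(-1)$ scaling gives $B=e^{-2f}A$, i.e. $A=e^{2f}B$ with $A,B$ constant. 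If $A\neq0$ then $B\neq0$ and $e^{2f}=A/B$ is constant, so $f$ is constant and we are done. This part is essentially kinematic: it uses only that the trace-free Chern torsion scales with weight $-1$ and has constant norm for any SKL metric.

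It remains to treat the degenerate case $A=0$, that is $T^{\circ}\equiv0$, in which the Chern torsion of $g$ (and of $\tilde g$) is pure trace; this is precisely the condition that the metric be locally conformally K\"ahler (LCK). This is the main obstacle, and it is here that the hypothesis $n\geq3$ is indispensable: for $n=2$ one always has $T^{\circ}\equiv0$, so only this case occurs, consistent with the equivalence SKL $\Leftrightarrow$ Vaisman in dimension two and with the failure of uniqueness for surfaces. For $n\geq3$ I would prove that an LCK SKL metric must be K\"ahler. Indeed, for an LCK metric the torsion $3$-form is $H=-(J\theta)\wedge\omega$ with $\theta$ the closed Lee form, so $\nabla^s$-parallel torsion forces $\nabla^s\theta=0$; since a direct check gives $\iota_{\theta^{\sharp}}H=0$, one gets $\nabla^s\theta=\nabla^{LC}\theta$ (here $\nabla^{LC}$ is the Levi-Civita connection), whence $\theta$ is Levi-Civita parallel, so the metric is Vaisman and $\theta$ is in particular coclosed. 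On the other hand, the pluriclosed condition for an LCK metric reads $(\bar\partial\theta^{1,0}-\theta^{1,0}\wedge\theta^{0,1})\wedge\omega=0$, and because the Lefschetz operator $\wedge\,\omega\colon A^{1,1}\to A^{2,2}$ is injective exactly when $n\geq3$, this yields $\bar\partial\theta^{1,0}=\theta^{1,0}\wedge\theta^{0,1}$. Tracing this identity against $\omega$ and using the Hermitian commutation relations together with the parallelism of $\theta$ should produce $|\theta|^2=0$, so that $\theta\equiv0$ and $g$ is K\"ahler. The same argument makes $\tilde g$ K\"ahler, and then two conformal K\"ahler metrics satisfy $0=d\tilde\omega=2e^{2f}\,df\wedge\omega$, forcing $df\wedge\omega=0$ and hence $df=0$.

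The difficulty is therefore concentrated entirely in the LCK case: one must show that combining pluriclosedness (through the Lefschetz injectivity, which requires $n\geq3$) with the parallelism of the torsion collapses the Lee form to zero. Making the tracing step rigorous, keeping careful track of the torsion corrections in the Hermitian $[\Lambda,\bar\partial]$ identity, is the technical heart of the proof; everything else reduces to the clean weight-$(-1)$ behaviour of the trace-free torsion.
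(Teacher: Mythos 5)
Your overall strategy is genuinely different from the paper's and the first two thirds of it are correct and clean. The paper does not split into cases: it computes $\tilde{B}_{i\overline{j}}$ in two ways (once from the SKL identity $B=\phi+\phi^{\ast}$, once from the definition of $B$), compares the results to get an algebraic identity in the $u_i$, and then uses $\nabla^s\eta=\nabla^s\tilde\eta=0$ in an adapted frame to force $u$ to be constant. Your argument instead isolates the trace-free torsion $T^{\circ}$, observes that it transforms with pure conformal weight $-1$ (the derivative terms cancel exactly against the trace correction --- this matches the paper's transformation formulas $\tilde{T}^j_{ik}=e^{-u}\{T^j_{ik}+u_i\delta_{jk}-u_k\delta_{ji}\}$ and $\tilde\eta_k=e^{-u}\{\eta_k-(n-1)u_k\}$), and that $|T^{\circ}|^2$ is constant for any SKL metric since $T$, $\eta$ and $g$ are all $\nabla^s$-parallel. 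The identity $|\tilde{T}^{\circ}|^2_{\tilde g}=e^{-2f}|T^{\circ}|^2_g$ with both sides constant then kills the generic case in one line. This is a nice structural argument that the paper's computation does not make explicit, and it correctly locates where $n\geq 3$ enters: for $n=2$ one always has $T^{\circ}\equiv 0$, which is exactly why uniqueness fails there.

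The gap is in the degenerate case $T^{\circ}\equiv 0$, where you must show that an LCK SKL metric with $n\geq 3$ is K\"ahler. Your reduction to this statement is correct, and the statement itself is true (it is the Remark following the paper's proof), but your proof of it stops at ``tracing this identity \ldots should produce $|\theta|^2=0$'', which is precisely the step that carries all the content; as written it is a conjecture, not a proof. The good news is that it can be closed much more cheaply than via the torsion-corrected $[\Lambda,\overline{\partial}]$ identities. The SKL condition gives the pointwise symmetry $P^{j\ell}_{ik}=0$ of display (\ref{eq:P}). Substituting the pure-trace form $T^j_{ik}=\frac{1}{n-1}(\delta_{ji}\eta_k-\delta_{jk}\eta_i)$ in a unitary frame with $\eta=\lambda\varphi_n$, and taking $i=j=1$, $k=\ell=2$ (two indices distinct from $n$ exist precisely because $n\geq 3$), one finds
\begin{equation*}
0=P^{12}_{12}=\sum_r\big\{T^1_{1r}\overline{T^2_{2r}}+T^2_{2r}\overline{T^1_{1r}}\big\}=\frac{2\lambda^2}{(n-1)^2},
\end{equation*}
since all the other terms vanish; hence $\lambda=0$ and the metric is K\"ahler. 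Alternatively, your route does work: $\nabla^s\eta=0$ plus the LCK form of $T$ gives $\overline{\partial}\eta=\frac{2}{n-1}(\sqrt{-1}\lambda^2\omega+\eta\wedge\overline{\eta})$ (this is computed in the paper's proof of Theorem \ref{thm4}), and combining this with the pluriclosed condition and Lefschetz injectivity of $\wedge\,\omega$ on $(1,1)$-forms for $n\geq 3$ forces $\sqrt{-1}\lambda^2\omega$ to be a multiple of the rank-one form $\eta\wedge\overline{\eta}$, which is impossible unless $\lambda=0$. Either way the hole is fillable, but as submitted the proof is incomplete at exactly the point you flagged as its ``technical heart''.
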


As mentioned above, in the case of $n=2$, a SKL metric is actually Vaisman, namely a Hermitian metric which is locally conformal K\"ahler with its  Lee form parallel under the Levi-Civita connection. Hence, on the universal cover, any SKL metric $g$ on $M^2$ is conformal to a K\"ahler metric, and thus is not unique within its conformal class when $g$ is not K\"ahler.
When $n\geq 3$, however, Theorem \ref{thm3} implies that any non-K\"ahler SKL metric is never locally conformal K\"ahler. We speculate that there cannot exist any other locally conformal K\"ahler metrics as well:

\begin{conjecture}\label{no_lcK}
If $(M^n,g)$ is a compact SKL manifold with $g$ not K\"ahler and $n\geq 3$, then $M^n$ does not admit any locally conformal K\"ahler metric.
\end{conjecture}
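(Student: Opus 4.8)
The plan is to rule out a \emph{strictly} locally conformal K\"ahler (lcK) metric, to dispose of the cases already classified in this paper by direct inspection, and to attack the general case through the rigid parallel structure forced by the SKL condition. The first and cleanest step is rigorous: by the Remark following Theorem \ref{thm1}, no compact manifold in the Fujiki class carries a non-K\"ahler SKL metric, so $(M^n,g)$ being non-K\"ahler SKL forces $M^n$ out of the Fujiki class and, in particular, $M^n$ admits no K\"ahler metric whatsoever. A globally conformally K\"ahler metric $e^{f}\omega_K$ would exhibit the genuine K\"ahler metric $\omega_K$ on $M^n$, which is impossible; hence any lcK metric $h$ must be \emph{strictly} lcK, its Lee form $\theta_h$ being closed but not exact, so that $[\theta_h]\neq 0$ in $H^1_{dR}(M;\R)$ and $b_1(M)\geq 1$. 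The problem is thereby reduced to deriving a contradiction from the coexistence of the non-K\"ahler SKL metric $g$ and a closed, non-exact one-form $\theta_h$ satisfying $d\omega_h=\theta_h\wedge\omega_h$.

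For the cases treated by the classification of this paper — compact non-K\"ahler SKL threefolds and all dimensions with degenerate torsion — I would settle the conjecture by direct computation on the explicit models. Each such $M^n$ comes with a concrete fibration/bundle description over a compact K\"ahler manifold, from which one can read off $H^1_{dR}(M;\R)$, the possible monodromies of a closed Lee form, and the Aeppli class $[\omega_g]_A$ of Theorem \ref{thm1}. The aim is to confront these data with the lcK structure equation $d\omega_h=\theta_h\wedge\omega_h$ on each model and check, once $n\geq 3$, that no admissible closed non-exact $\theta_h$ survives — equivalently, that any candidate Lee form is forced to be exact, contradicting the reduction above. This is a finite, case-by-case verification and should present no conceptual difficulty.

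In the general, unclassified situation I would exploit Theorem \ref{thm2}: the $\nabla^s$-parallel holomorphic vector field $X$, of constant length $|X|_g=c>0$, has a $\nabla^s$-parallel metric dual $(1,0)$-form $\eta$. Since $\nabla^s$ is Hermitian with \emph{parallel}, totally skew-symmetric torsion, the exterior derivatives $d\eta$ and $d\overline{\eta}$ are fixed parallel expressions in the torsion tensor, and $M^n$ carries the parallel framing generated by $\eta$, $\overline{\eta}$ and the torsion. The strategy is to decompose the closed Lee form $\theta_h$ along this framing, to feed the decomposition into $d\theta_h=0$ and into the lcK equation, and thereby to show that $\theta_h$ is cohomologous to a $\nabla^s$-parallel one-form. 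Wedging the lcK structure equation with a suitable power of $\omega_g$ and integrating against this parallel datum should then manufacture either a balanced or a strongly Gauduchon (in the sense of Popovici) class on $M^n$, in direct contradiction with Theorems 6 and 7 of \cite{ZZ}.

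The main obstacle is precisely this last step in the absence of a structure theorem for non-degenerate torsion in dimension $\geq 4$. Theorem \ref{thm3} controls only SKL metrics inside a \emph{fixed} conformal class, whereas a competing lcK metric $h$ is a priori unrelated to $g$, need not lie in the conformal class of $g$, and need not even be Gauduchon; there is no mechanism yet tying its Lee form $\theta_h$ to the parallel torsion of $g$. Bridging the gap between the single rigid metric $g$, whose geometry is completely pinned down by the SKL condition, and an arbitrary lcK metric $h$, about which one knows only the weak equation $d\omega_h=\theta_h\wedge\omega_h$, is exactly what is missing — and is the reason the statement is posed here as a conjecture rather than a theorem.
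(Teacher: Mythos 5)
The statement you are addressing is stated in the paper as Conjecture \ref{no_lcK}: it is \emph{open}, and the paper itself proves only two partial results toward it --- the Remark following Theorem \ref{thm3}, that the SKL metric $g$ itself can never be locally conformal K\"ahler when $n\geq 3$ (a purely local consequence of the conformal-uniqueness argument), and Theorem \ref{thm4}, that no \emph{Vaisman} metric can coexist with a non-K\"ahler SKL metric on a compact $M^n$, $n\geq 3$. So there is no proof in the paper to match your attempt against, and your proposal, as you yourself concede in its final paragraph, is not a proof either. Your first reduction is sound: since a compact non-K\"ahler SKL manifold admits no balanced metric (\cite[Theorem 6]{ZZ}), it admits no K\"ahler metric at all, so any lcK metric must be strictly lcK with non-exact Lee form.

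The genuine gaps are in your second and third steps. The middle step --- ``direct computation on the explicit models'' for $n=3$ and for degenerate torsion --- is asserted, not carried out, and it is not a routine finite check: by Theorem \ref{thm8} the relevant models are (quotients of) products of two Sasakian $3$-manifolds, i.e.\ Calabi--Eckmann-type threefolds, and deciding whether such manifolds admit \emph{any} lcK metric is a substantive open-ended problem; nothing in the classification hands you $H^1_{dR}$, the monodromy of a candidate Lee form, or a mechanism forcing its exactness. For the third step, it is instructive to see how the paper gets around the obstacle in the one case it can settle: in the proof of Theorem \ref{thm4}, the parallelism that powers the integration-by-parts argument ($\nabla^s\hat{\eta}=0$, which lets one express $\overline{\partial}\hat{\eta}$ in terms of $\hat{\omega}$ and $\hat{\eta}\wedge\overline{\hat{\eta}}$) comes from the \emph{Vaisman condition on the competing metric} $\hat{g}$, not from the SKL structure of $g$; the vanishing of $\int_M \partial\overline{\partial}\omega_g\wedge\hat{\omega}^{n-2}$ then forces $\hat{\eta}=0$, i.e.\ $\hat{\omega}$ balanced, contradicting \cite[Theorem 6]{ZZ}. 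For a general lcK metric $h$ one only has $d\omega_h=\theta_h\wedge\omega_h$ with $\theta_h$ closed; decomposing $\theta_h$ along the $\nabla^s$-parallel framing of $g$ does not make $\theta_h$ parallel or even cohomologous to a parallel form, because the two metrics are a priori unrelated. That missing bridge is precisely why the statement remains a conjecture, and your proposal does not supply it.
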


As a partial evidence, we prove the following:

\begin{theorem} \label{thm4}
Let $(M^n,g)$ be a compact SKL manifold with $g$ not K\"ahler. If $n\geq 3$, then $M^n$ cannot admit any Vaisman metric.
\end{theorem}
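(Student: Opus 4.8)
The plan is to obtain a contradiction from the coexistence, on one and the same compact complex manifold $M^n$, of the pluriclosed SKL metric $g$ (which exists by hypothesis and satisfies $\partial\overline{\partial}\omega_g=0$ by the pluriclosedness of SKL metrics established in \cite{ZZ}) and a hypothetical Vaisman metric $\omega_V$. The engine of the argument is a second--order identity for $\omega_V$ coming from the Vaisman structure, paired by integration against a carefully chosen test form built from the transverse K\"ahler form and $\omega_g$. Throughout I work on the fixed complex manifold, using only that $J$ is common to both metrics.

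First I would record the structure of a compact Vaisman metric. Writing $\theta$ for the (parallel) Lee form and $\beta=\theta^{1,0}$, the locally conformal K\"ahler condition gives $\partial\omega_V=\beta\wedge\omega_V$ and $\overline{\partial}\omega_V=\overline{\beta}\wedge\omega_V$, while $d\theta=0$ yields $\partial\beta=0$, $\overline{\partial}\overline{\beta}=0$ and $\partial\overline{\beta}=-\overline{\partial}\beta$. The parallelism of $\theta$ (the extra Vaisman input beyond lcK) is precisely what forces the transverse K\"ahler form $\omega_0:=\omega_V-i\,\beta\wedge\overline{\beta}$ to be $d$--closed, real, semipositive, and of constant rank $n-1$, its kernel being the complex line tangent to the canonical foliation; equivalently $\overline{\partial}\beta=-i\,\omega_0$. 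A direct computation, using $\partial\overline{\partial}\beta=0$ and $\partial\overline{\partial}(\beta\wedge\overline{\beta})=-(\overline{\partial}\beta)^{2}=\omega_0^{2}$, then gives the key identity
\[
\partial\overline{\partial}\omega_V=i\,\omega_0\wedge\omega_0 .
\]
This is consistent with the case $n=2$, where $\omega_0$ has rank $1$, so $\omega_0^{2}=0$ and $\omega_V$ is pluriclosed, matching the equivalence of SKL and Vaisman in dimension $2$; for $n\geq 3$ one has $\omega_0^{2}\neq 0$.

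Next I would exploit that $g$ is pluriclosed. Set $\Psi:=\omega_0^{\,n-3}\wedge\omega_g$, an $(n-2,n-2)$--form. Since $\omega_0$ is $d$--closed we have $\partial\omega_0=\overline{\partial}\omega_0=0$, so every derivative lands on $\omega_g$, and because $\partial\overline{\partial}\omega_g=0$ we get $\partial\overline{\partial}\Psi=\omega_0^{\,n-3}\wedge\partial\overline{\partial}\omega_g=0$. On a compact manifold, integration by parts then gives $\int_M\partial\overline{\partial}\omega_V\wedge\Psi=\int_M\omega_V\wedge\partial\overline{\partial}\Psi=0$. On the other hand, the key identity yields
\[
\int_M\partial\overline{\partial}\omega_V\wedge\Psi=i\int_M\omega_0\wedge\omega_0\wedge\omega_0^{\,n-3}\wedge\omega_g=i\int_M\omega_0^{\,n-1}\wedge\omega_g .
\]
Since $\omega_0$ is semipositive of rank $n-1$, the form $\omega_0^{\,n-1}$ is a nonzero semipositive $(n-1,n-1)$--form whose kernel is exactly the foliation line; wedging with $\omega_g$, which is strictly positive in that remaining direction, makes $\omega_0^{\,n-1}\wedge\omega_g$ a strictly positive top--form, so $\int_M\omega_0^{\,n-1}\wedge\omega_g>0$. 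This contradicts the vanishing just obtained, and hence $M^n$ admits no Vaisman metric when $n\geq 3$.

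The main obstacle, and the step requiring the most care, is the Vaisman structure identity $\partial\overline{\partial}\omega_V=i\,\omega_0^{2}$ together with the assertion that $\omega_0$ is genuinely $d$--closed and of constant rank $n-1$: this is exactly where the full strength of the Vaisman (parallel Lee form) hypothesis is used, as opposed to the weaker lcK condition, and it is the reason the present method settles only the Vaisman case of Conjecture \ref{no_lcK} rather than the general locally conformal K\"ahler case. The remaining ingredients — that $g$ is pluriclosed (from \cite{ZZ}), the choice $\Psi=\omega_0^{\,n-3}\wedge\omega_g$, the integration by parts in complementary degree, and the pointwise positivity of $\omega_0^{\,n-1}\wedge\omega_g$ — are then routine.
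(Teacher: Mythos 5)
Your proposal is correct, and it rests on the same basic mechanism as the paper's proof --- pairing the pluriclosedness $\partial\overline{\partial}\omega_g=0$ of the SKL metric against an $(n-2,n-2)$-form built from the hypothetical Vaisman metric and extracting a sign contradiction by Stokes --- but the two arguments execute this quite differently. The paper takes the test form $\hat{\omega}^{n-2}$, computes $\overline{\partial}\partial\hat{\omega}^{n-2}$ and $\overline{\partial}\hat{\eta}$ in a unitary frame using the special shape $\hat{T}^j_{ik}=\frac{1}{n-1}(\delta_{ji}\hat{\eta}_k-\delta_{jk}\hat{\eta}_i)$ of the lcK torsion and the $\nabla^s$-parallelism of $\hat{\eta}$ forced by the Vaisman condition, diagonalizes $\omega_g$ against $\hat{\omega}$, and concludes that the integral identity forces $\hat{\eta}\equiv 0$; the contradiction is then outsourced to the non-existence of balanced metrics on compact non-K\"ahler SKL manifolds (\cite[Theorem 6]{ZZ}). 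You instead import the global structure theory of Vaisman manifolds --- the exact, semipositive transverse K\"ahler form $\omega_0$ of rank $n-1$ and the identity $\partial\overline{\partial}\omega_V=(\mathrm{const})\,\omega_0^{2}$ --- take the test form $\omega_0^{\,n-3}\wedge\omega_g$ (here $n\geq 3$ enters), and obtain an immediate pointwise-positivity contradiction with no frame computation and no appeal to the balanced-metric theorem. What each buys: your route is shorter and conceptually transparent once the Vaisman structure identities are granted (they are standard, e.g. $\omega_0=d\theta^c$, but they are external input and their proof uses the parallelism of $\theta$ in essentially the same way the paper's frame computation does), while the paper's route stays entirely inside its own torsion formalism and needs only \cite[Lemma 7]{ZZ} as the characterization of Vaisman. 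One minor caveat in your write-up: the precise constants in $\omega_0=\omega_V-i\beta\wedge\overline{\beta}$ and $\overline{\partial}\beta=-i\,\omega_0$ presuppose a normalization of the (constant) length $|\theta|$; since a Vaisman metric can be rescaled to achieve any such normalization and only the sign of $\int_M\omega_0^{\,n-1}\wedge\omega_g$ matters, this does not affect the validity of the argument, but the identity should be stated with an unspecified positive constant or with the normalization made explicit.
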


Note that a compact Hermitian manifold $(M^n,g)$ is called \emph{Calabi-Yau with torsion} or CYT in short, if its Strominger connection $\nabla^s$ has holonomy in $SU(n)$, that is, the first Ricci curvature of $\nabla^s$ is identically zero. If $g$ is K\"ahler, then it is a compact Ricci flat K\"ahler manifold, often called a Calabi-Yau space (in the broader sense). Assume that $g$ is not K\"ahler. It was proved in \cite{IvanovP} that if $(M^n,g)$ is CYT and $g$ is pluriclosed (and non-K\"ahler), then the plurigenera are all zero, namely, the Kodaira dimension of  $M$ is $-\infty $. Since all SKL manifolds are pluriclosed, we get as a consequence that

\begin{remark} \label{thm5}
If $(M^n,g)$ is a compact non-K\"ahler SKL manifold that is CYT, (or more generally if the total scalar curvature of the Strominger connection is nonnegative), then its Kodaira dimension $\mbox{kod}(M^n) = -\infty $.
\end{remark}

In fact, it seems to us that SKL and CYT together would make a very restrictive situation, and we would like to propose the following:

\begin{conjecture}
Let $(M^n,g)$ be a compact SKL manifold with $n\geq 2$. Assume that the universal cover of $M^n$ does not admit any K\"ahler de Rham factor of dimension bigger than $1$. If the Strominger connection $\nabla^s$ has vanishing first Ricci curvature, then $g$ is Strominger flat.
\end{conjecture}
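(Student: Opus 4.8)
The plan is to reduce the conjecture to a statement about the individual factors of a holonomy splitting of the Strominger connection, and then to show that under the stated hypotheses no factor can carry nonzero Strominger curvature. Since $(M^n,g)$ is SKL, the metric is pluriclosed and the torsion $T$ of $\nabla^s$ is $\nabla^s$-parallel; together with $\nabla^s g=0$ and $\nabla^s J=0$ this means the restricted holonomy group of $\nabla^s$ preserves $g$, $J$ and $T$. Passing to the universal cover, I would first produce the associated $\nabla^s$-parallel, orthogonal, $J$-invariant decomposition $TM=V_0\oplus V_1\oplus\cdots\oplus V_k$, with $V_0$ the flat (trivial-holonomy) summand and each $V_i$ holonomy-irreducible. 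Because the holonomy preserves each $V_i$ and $R^s$ obeys all the K\"ahler symmetries, the curvature is block diagonal: $R^s_{a\bar b c\bar d}$ vanishes unless all four indices lie in a common block. Consequently the first Strominger-Ricci form is additive over the splitting, the CYT hypothesis passes to each factor, and (using that $J$ and $T$ restrict) each factor is again SKL, so it suffices to prove that every factor is Strominger flat.

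The next step is an algebraic identity forced on the parallel torsion. Since $R^s$ is K\"ahler-like it obeys the torsion-free first Bianchi identity $\sum_{\mathrm{cyc}}R^s(X,Y)Z=0$, while the general first Bianchi identity for a connection with torsion, specialized to $\nabla^s T=0$, gives $\sum_{\mathrm{cyc}}R^s(X,Y)Z=\sum_{\mathrm{cyc}}T\!\left(T(X,Y),Z\right)$. Comparing the two yields the Jacobi-type relation $\sum_{\mathrm{cyc}}T\!\left(T(X,Y),Z\right)=0$, so on each factor the parallel torsion behaves exactly like the bracket of a metric Lie algebra. This is the algebraic fingerprint of a compact Samelson space, it is precisely the structure one expects a Strominger-flat non-K\"ahler factor to carry, and it is the guide for the hard direction below.

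Now I would run the dichotomy on the irreducible factors. If $T$ vanishes on $V_i$, then $\nabla^s$ restricts to the Levi-Civita connection there, so $V_i$ is a genuine K\"ahler de Rham factor; by hypothesis its complex dimension is at most one, and a Ricci-flat K\"ahler factor of complex dimension one is flat. If instead $T$ does not vanish on $V_i$, the factor is genuinely non-K\"ahler, and I must prove that vanishing of its first Strominger-Ricci curvature forces $R^s\equiv 0$ on $V_i$. This is the main obstacle. A K\"ahler-like curvature tensor is not determined by its Ricci contraction---compact Calabi-Yau manifolds are the torsion-free witnesses---so the CYT condition alone is far from sufficient, and the entire difficulty is to exploit the rigidity of the parallel, Jacobi-satisfying torsion that is unavailable in the K\"ahler setting. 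The natural strategy is to feed $\nabla^s T=0$ into the second Bianchi identity so as to control $\nabla^s R^s$ through $R^s$ and $T$, combine this with the K\"ahler-like symmetries to pin down $R^s$ in terms of the Lie-algebraic data carried by $T$, and then interpret the vanishing of the first Ricci curvature as forcing the factor to coincide with the bi-invariant (Samelson) model, whose Strominger connection is flat. On a compact quotient I would hope to realize this through a Bochner/Weitzenb\"ock identity whose positive term---supplied by the bi-invariant metric structure dictated by the Jacobi relation---is controlled by the first Ricci curvature, so that CYT forces $R^s\equiv0$. Establishing this implication rigorously is exactly what separates the conjecture from a theorem.

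Finally I would reassemble the pieces: $V_0$ is flat, each K\"ahler factor is flat by the dimension hypothesis, and each non-K\"ahler factor is Strominger flat by the previous step; hence $R^s\equiv0$ on the universal cover and $g$ is Strominger flat. I expect the non-K\"ahler flatness step to be the genuine difficulty, while everything else is bookkeeping with the holonomy splitting and the Bianchi identities; one subtlety to handle carefully along the way is that the $\nabla^s$-parallel splitting need not be a Riemannian product, since the parallel torsion may couple the factors, so the identification of the torsion-free factors with the Levi-Civita de Rham factors must be justified rather than assumed.
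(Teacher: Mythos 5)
The statement you are addressing is stated in the paper as a \emph{conjecture}, and the authors do not prove it; they only establish the special case $n\leq 3$ (Theorem \ref{thm6}), by a concrete mechanism: in dimension at most $3$ a non-K\"ahler SKL metric automatically has degenerate torsion (Lemma \ref{lemma8}), so under a strictly admissible frame the Strominger connection matrix $\theta^s$ is diagonal, hence $\Theta^s$ is diagonal with $(i,i)$-entry $f_i\,\varphi_i\wedge\overline{\varphi}_i$ by the K\"ahler-like condition $\,^t\!\varphi\,\Theta^s=0$, and the vanishing of $\mathrm{tr}\,\Theta^s$ then forces each $f_i=0$. Your reduction to $\nabla^s$-holonomy-irreducible factors, the block-diagonality of a K\"ahler-like curvature under such a splitting, and the Jacobi identity for the parallel torsion (obtained by comparing the first Bianchi identity with torsion against the K\"ahler-like one) are all sensible and consistent with the structure theory behind the classification of Strominger flat manifolds in \cite{WYZ}. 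But the decisive step --- that on an irreducible non-K\"ahler factor the vanishing of the first Strominger Ricci curvature forces $R^s\equiv 0$ --- is precisely the content of the conjecture, and your proposal explicitly declines to establish it. What you have written is therefore a reduction of the conjecture to essentially itself on irreducible factors, not a proof.

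Beyond the acknowledged main gap, two subsidiary points are flagged but not resolved and would need real work. First, a $\nabla^s$-parallel, $J$-invariant splitting of $T\widetilde{M}$ does not by itself integrate to a holomorphic isometric product, because $\nabla^s$ has torsion; in the paper's own arguments (Theorems \ref{thm8} and \ref{thm10}) the splitting is obtained only after showing the relevant distributions are parallel for the \emph{Riemannian} connection, which requires explicit control of the torsion components coupling the blocks. Second, ``$T$ vanishes on $V_i$'' must be taken to mean $\iota_XT=0$ for every $X\in V_i$ (using the total skew-symmetry of the lowered torsion), not merely the vanishing of $T$ restricted to $V_i^{\otimes 2}$, before one may identify $V_i$ with a Levi-Civita K\"ahler de Rham factor and invoke the dimension hypothesis. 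As it stands, the proposal is a plausible programme whose hardest step is exactly the open problem.
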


In other words, we conjecture that compact non-K\"ahler SKL manifolds (without K\"ahler de Rham factors of dimension bigger than $1$) that are CYT must be Strominger flat. As a supporting evidence, we show that it is true in dimension $2$ or $3$:

\begin{theorem} \label{thm6}
Let $(M^n,g)$ be a non-K\"ahler SKL manifold. If $n\leq 3$ and the Strominger connection has the vanishing first Ricci curvature, then it is Strominger flat.
\end{theorem}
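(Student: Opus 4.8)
The plan is to localize the problem using the parallel vector field of Theorem \ref{thm2} and then reduce the vanishing of the Strominger curvature to a purely algebraic rigidity statement in complex dimension at most two. No compactness will be needed, since every ingredient is local.

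First I would invoke Theorem \ref{thm2} to obtain a $\nabla^s$-parallel holomorphic vector field $X$ with $|X|$ a positive constant, so $X\neq 0$. Choosing a local unitary frame $e_1,\dots,e_n$ with $e_n=X/|X|$, parallelism of $X$ (and of $\overline X$) forces $R^s(\cdot,\cdot)X=0$, so every component $R^s_{i\bar jk\bar l}$ with an index equal to $n$ vanishes; together with the K\"ahler-like symmetries this means $R^s$ is supported on the transverse indices $1,\dots,n-1$. The first Ricci curvature is the trace $\rho_{i\bar j}=\sum_k R^s_{i\bar jk\bar k}$, so the hypothesis of vanishing first Ricci is exactly the vanishing of the Ricci contraction of the transverse curvature, which is itself a K\"ahler-like curvature tensor on the $(n-1)$-dimensional parallel transverse space $E=X^{\perp}$.

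When $n\le 2$ the transverse space $E$ has complex dimension at most one, and a K\"ahler-like tensor in one variable is the single component $R^s_{1\bar11\bar1}$, which equals its own Ricci trace; hence the hypothesis gives $R^s=0$ at once. The content of the theorem is therefore the case $n=3$, where $E$ is two-dimensional and Ricci-flatness alone does not suffice: a two-dimensional K\"ahler-like tensor decomposes into a scalar part, a trace-free Ricci part, and a Bochner (Weyl) part, and the Bochner part survives the vanishing of the Ricci trace. This is the main obstacle, and it is precisely here that the parallel torsion must enter.

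To eliminate the Bochner part I would feed the hypothesis $\nabla^s T=0$ into the second Bianchi identity of $\nabla^s$ recorded in \cite[Section 2]{ZZ}. With parallel torsion the cyclic derivative $\mathfrak S\,(\nabla^s R^s)$ is expressed through $R^s$ composed with $T$; combining this with the K\"ahler-like symmetries (which already encode the algebraic first Bianchi identity) and with the fact that $\nabla^s$ preserves the parallel splitting $T^{1,0}M=\C X\oplus E$, I expect to conclude that the transverse curvature is itself $\nabla^s$-parallel. A parallel K\"ahler-like curvature tensor is the curvature operator of a Hermitian locally symmetric model, and every irreducible Hermitian symmetric space is Einstein with nonzero scalar curvature; hence a Ricci-flat Hermitian symmetric surface is flat. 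This forces the transverse Bochner part to vanish, so $R^s\equiv 0$ and $g$ is Strominger flat. The delicate step, which I would verify by an explicit computation in the structure equations of \cite{ZZ}, is exactly the passage from parallel torsion to parallel transverse curvature; everything downstream of it is the standard Einstein-versus-flat dichotomy for symmetric spaces.
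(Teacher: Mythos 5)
Your reduction to the transverse space is fine as far as it goes: the parallel field $X_{\!\eta}$ of Theorem \ref{thm2} does kill every curvature component carrying the index $n$, and for $n=2$ the theorem follows immediately. But the step you yourself flag as delicate --- passing from $\nabla^sT=0$ and the second Bianchi identity to $\nabla^s$-parallelism of the transverse curvature --- is a genuine gap, and I do not believe it can be closed along the lines you sketch. The second Bianchi identity with parallel torsion only constrains the \emph{cyclic sum} of covariant derivatives of $R^s$; it does not determine the individual derivatives, and indeed parallel torsion never implies parallel curvature (the torsion-free Kähler case, where $\nabla^sT=0$ holds trivially and $\nabla R$ is generically nonzero, is already a counterexample to the implication you need). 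Even granting parallel curvature, the downstream argument is not available: a metric connection with torsion having parallel curvature produces an Ambrose--Singer homogeneous structure, not a Hermitian locally symmetric space, so the ``irreducible Hermitian symmetric spaces are Einstein with nonzero scalar'' dichotomy does not apply to $\nabla^s$. As written, the two-dimensional Bochner part is not eliminated.

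The paper avoids this entirely by exploiting a much stronger structural fact that your argument does not use: in dimension $n\leq 3$ the torsion is automatically \emph{degenerate} (Lemma \ref{lemma8}), so under a strictly admissible frame the only nonzero torsion components are $T^i_{in}$, and the Strominger connection matrix $\theta^s$ is fully \emph{diagonal} (Lemmas \ref{lemma7} and \ref{lemma9}), not merely block-diagonal with respect to ${\mathbb C}X_{\!\eta}\oplus E$. Hence $\Theta^s$ is diagonal, and the K\"ahler-like first Bianchi identity $\,^t\!\varphi\,\Theta^s=0$ forces each diagonal entry to have the form $f_i\,\varphi_i\wedge\overline{\varphi}_i$ with $f_i$ real. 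The entire curvature tensor therefore reduces to the $n$ numbers $f_i=R^s_{i\bar{i}i\bar{i}}$, the alleged Bochner part is zero \emph{a priori}, and $\mathrm{tr}\,\Theta^s=\sum_i f_i\,\varphi_i\wedge\overline{\varphi}_i=0$ recovers every component. To repair your proof you would need to replace the second-Bianchi/symmetric-space step by this diagonality of $\theta^s$, which is where the parallel torsion actually enters.
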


Note that Strominger flat manifolds were classified in \cite{WYZ}, they are quotients of Samelson spaces \cite{S}, namely Lie groups equipped with bi-invariant metrics and compatible left invariant complex structures. This is analogous to the classic result of Boothby \cite{Boothby} which states that any compact Chern flat manifold is a quotient of a complex Lie group equipped with a left-invariant metric (see also \cite{Wang}).

The above results are pretty much all on the negative side, illustrating how restrictive the class of SKL manifolds is. On the existence side, for $n=2$, since SKL is equivalent to Vaisman, we know from the work of Belgun \cite{Belgun} that there are three types of compact non-K\"ahler SKL surfaces: the (non-K\"ahler) properly elliptic surfaces, the Kodaira surfaces, and some (but not all) Hopf surfaces, whose Kodaira dimensions are $1$, $0$, and $-\infty$, respectively. All SKL complex nilmanifolds with nilpotent complex structure were classified in \cite{ZZ1}. They turned out to be a very special type of step (at most) two nilpotent Lie groups and explicit descriptions were given there.

As noted in \cite{WYZ}, a central Calabi-Eckmann threefold $S^3\times S^3$ is Strominger flat, hence SKL. More generally, if $N_1$ and $N_2$ are two Sasakian $3$-manifolds, then the natural Hermitian structure on the product manifold $M^3=N_1\times N_2$ is necessarily SKL.

\begin{definition} \label{Sasakian}
Recall that a Sasakian manifold $(N^{2m+1}, g, \xi)$ is an odd dimensional Riemannian manifold $(N,g)$ equipped with a Killing vector field $\xi$ with unit norm, such that:
\begin{enumerate}
\item The tensor field $\frac{1}{c}\nabla \xi$ (where $c>0$ is a constant), which sends a tangent vector $X$ to the tangent vector $\frac{1}{c}\nabla_X\xi$, gives an integrable orthogonal complex structure $J$ on the distribution $H$, where $H$ is the perpendicular complement of $\xi $ in the tangent bundle $TN$.
\item Denote by $\alpha $ the $1$-form dual to $\xi$, namely, $\alpha (X) = g(X, \xi)$ for any $X$, then $\alpha \wedge (d\alpha )^m$ is nowhere zero. That is, $\alpha$ gives a contact structure on $N$.
\end{enumerate}
\end{definition}

\begin{definition} \label{HSasakian}
Let $(N_1^{2n_1+1}, g_1, \xi_1)$ and $(N_2^{2n_2+1}, g_2, \xi_2)$ be two Sasakian manifolds. On the product Riemannian manifold $M=N_1\times N_2$, of even dimension $2n=2(n_1+n_2+1)$, consider the natural almost complex structure $J$ defined by (where $c_i>0$ are constants)
$$ J\xi_1 = \xi_2, \ \ JX_i = \frac{1}{c_i}\nabla_{X_i}\xi_i \ \ \forall \ X_i \in H_i , \ i=1,2 $$
It is well-known to be integrable. We will call the Hermitian manifold  $(N_1\times N_2, g_1\times g_2, J)$ the standard Hermitian structure on the product of two Sasakian manifolds.
\end{definition}

Note that our notations for $J$ here is slightly more general in the sense that we allow the two scaling constants $c_1$ and $c_2$ here in the construction of the complex structure, namely, the K\"ahler form of the metric $g$ is given by
$$ \omega = \frac{1}{2c_1} d\alpha_1 + \frac{1}{2c_2} d\alpha_2 + \alpha_1\wedge \alpha_2. $$
It is proved by Belgun \cite[Proposition 3.2]{Belgun2} that for the Hermitian manifold $(M^n,g)=(N_1\times N_2, g_1\times g_2)$, its Strominger connection $\nabla^s$ always has parallel torsion (this is also true when the two positive constants $c_1$,  $c_2$ are not $1$). On the other hand, it is well known to experts that the metric $g$ will be pluriclosed if and only if both $n_1\leq 1$ and $n_2\leq 1$, since
$$ \sqrt{\!-\!1}\partial \overline{\partial} \omega =  d\alpha_1\wedge d\alpha_1 + d\alpha_2 \wedge d\alpha_2. $$
See for instance \cite[Formula (4.3)]{Matsuo}, where his $\Omega$ stands for the K\"ahler form $\omega$ and  the coefficients are $a=0$, $b=1$, $ \Phi_i = d \alpha_i$, with $\alpha_i$ being the contact form as in the above definition for $i=1,2$. The above formula implies that $g$ is pluriclosed when both $n_1\leq 1$ and $n_2\leq 1$,  and $g$ is not pluriclosed if either $n_1$ or $n_2$ is bigger than $1$.

Since SKL means pluriclosed plus $\nabla^s$ has parallel torsion by \cite[Theorem 1]{ZZ}, we know that the product of two Sasakian manifolds will be SKL if and only if both factors are of real dimension $3$ or $1$:

\begin{corollary} \label{thm7}
Let $M^n$  be the standard Hermitian manifold on the product of two Sasakian manifolds, of complex dimension $n=n_1+n_2+1\geq 2$. Then $M^n$ is SKL if and only if both $n_1\leq 1$ and $n_2\leq 1$.
\end{corollary}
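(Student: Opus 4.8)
The plan is to reduce the SKL condition to the two ingredients in its characterization. By \cite[Corollary 4]{ZZ}, a Hermitian metric is SKL precisely when it is pluriclosed and its Strominger torsion is $\nabla^s$-parallel. Since the product structure on $N_1\times N_2$ is fixed, I would verify these two conditions separately and then combine them.

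First I would dispose of the torsion condition, which is in fact automatic. Belgun's \cite[Proposition 3.2]{Belgun2} asserts that for the standard Hermitian structure $(N_1\times N_2, g_1\times g_2, J)$ the Strominger connection always has $\nabla^s$-parallel torsion, with no restriction on $n_1,n_2$ (and allowing the scaling constants $c_1,c_2\neq 1$). Hence the parallel-torsion half of the SKL criterion holds unconditionally, and the entire question collapses to determining when $g$ is pluriclosed.

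For the pluriclosed condition I would start from the identity
$$\sqrt{-1}\,\partial\overline{\partial}\omega = d\alpha_1\wedge d\alpha_1 + d\alpha_2\wedge d\alpha_2,$$
quoted above (compare \cite[Formula (4.3)]{Matsuo}). The key observation is that $d\alpha_i\wedge d\alpha_i=(d\alpha_i)^2$ is a $4$-form built entirely from directions tangent to the factor $N_i$; the two summands therefore occupy complementary subspaces of $\Lambda^4 T^*M$ and cannot cancel, so $\partial\overline{\partial}\omega=0$ if and only if each term $(d\alpha_i)^2$ vanishes separately. It then remains to read off from the contact geometry of $N_i^{2n_i+1}$ when $(d\alpha_i)^2=0$: if $n_i=0$ then $d\alpha_i=0$ on the one-dimensional factor; if $n_i=1$ then $(d\alpha_i)^2$ is a $4$-form on a $3$-manifold and so vanishes; while if $n_i\geq 2$ the contact condition $\alpha_i\wedge(d\alpha_i)^{n_i}\neq 0$ of Definition \ref{Sasakian} forces $(d\alpha_i)^{n_i}\neq 0$, and hence $(d\alpha_i)^2\neq 0$. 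Thus $(d\alpha_i)^2=0$ exactly when $n_i\leq 1$, and the metric is pluriclosed exactly when both $n_1\leq 1$ and $n_2\leq 1$. Combined with the automatic parallel torsion, this yields the stated equivalence.

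Since the two structural inputs---Belgun's parallel-torsion result and the $\partial\overline{\partial}\omega$ formula---are already available, there is no serious obstacle here; the only points requiring care are the elementary but essential remark that the two $4$-forms cannot interfere, together with a clean treatment of the degenerate factor $n_i=0$, where $N_i$ reduces to a circle and the contact data trivializes.
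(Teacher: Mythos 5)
Your proposal is correct and follows essentially the same route as the paper: reduce SKL to pluriclosed plus $\nabla^s$-parallel torsion via \cite[Corollary 4]{ZZ}, invoke Belgun's \cite[Proposition 3.2]{Belgun2} for the torsion half, and use the identity $\sqrt{-1}\,\partial\overline{\partial}\omega = d\alpha_1\wedge d\alpha_1 + d\alpha_2\wedge d\alpha_2$ to see that pluriclosedness holds exactly when $n_1,n_2\leq 1$. The only difference is cosmetic: the paper also carries out a direct frame computation verifying $\nabla^sT=0$ for the product structure, whereas you take Belgun's result as a black box, and your explicit remarks that the two $4$-forms live on complementary factors and that the contact condition forces $(d\alpha_i)^2\neq 0$ for $n_i\geq 2$ are exactly the points the paper leaves implicit.
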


Here we ignored the trivial case of $n_1=n_2=0$. When $n\geq 2$, $M$ is always non K\"ahler, and the condition $n_1, n_2\leq 1$ means  either $n=2$ and $M$ is the product of a Sasakian $3$-manifold with the circle $S^1$ (or ${\mathbb R}$), or $n=3$ and $M$ is the product of two Sasakian $3$-manifolds.

Let $N^3$ be a complete, simply-connected Sasakian $3$-manifold. When $N$ is compact, or more generally when $N$ is co-compact in the sense that there is a compact subset $K\subseteq N$ and a group $\Gamma$ of isometries of $N$ preserving the Sasakian structure such that the union of $h(K)$ for all $h\in \Gamma$ covers $N$, then $N$ is classified by Belgun \cite{Belgun}, \cite{Belgun1}, \cite{Belgun2}. In particular, it was shown in \cite[Theorem 4.5]{Belgun2}  that after the so-called parallel modification, $N^3$ can be deformed to one of three standard Lie groups with  left invariant Sasakian structures: $SU(2)$, $\widetilde{SL}(2, {\mathbb R})$, and $Nil_3$.

The main purpose of this paper to give the following classification theorem for three dimensional non-K\"ahler SKL manifolds, which says that all such manifolds are given by Corollary \ref{thm7}:

\begin{theorem}\label{thm8}
Let $M^n$ be a complete, non-K\"ahler SKL manifold. Let $\widetilde{M}$ be its universal cover.
\begin{enumerate}
\item If $n=2$, then $\widetilde{M} = N^3\times {\mathbb R}$ is the product of a Sasakian $3$-manifold $N^3$ with ${\mathbb R}$.
\item If $n=3$, then either $\widetilde{M}$ is holomorphically isometric to $M_1^2 \times C$, where $M^2_1$ is a non-K\"ahler SKL surface and $C$ is a K\"ahler curve, or $\widetilde{M}=N_1^3 \times N_2^3$ is the product of two Sasakian $3$-manifolds.
    \end{enumerate}
\end{theorem}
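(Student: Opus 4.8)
\emph{Proof proposal.} The plan is to pass to the universal cover $\widetilde{M}$, which is complete and simply connected, and to extract the product decompositions from the parallel data supplied by the SKL condition: the $\nabla^s$-parallel torsion (recall that SKL means pluriclosed plus $\nabla^s$-parallel torsion, \cite{ZZ}) together with the $\nabla^s$-parallel holomorphic vector field of Theorem \ref{thm2}. The two dimensions are treated separately, and within dimension three the alternative in part (ii) will be governed by the rank of the parallel torsion tensor. A point to keep in mind throughout is that the factors appearing are generally \emph{not} $J$-invariant: $J$ sends the Lee direction into the Sasakian leaf and interchanges the two Reeb fields of a Sasakian product. Consequently the splittings must be produced as Riemannian (de Rham) products rather than as holomorphic ones.

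For $n=2$ I would invoke \cite[Theorem 5]{ZZ}, by which $g$ is Vaisman, so its real Lee form $\theta$ is parallel for the Levi-Civita connection $\nabla$. Since $g$ is not K\"ahler, $\theta\neq0$, and parallelism makes $|\theta|$ a positive constant, so the Lee field $B=\theta^\sharp$ is a nowhere-vanishing $\nabla$-parallel vector field. The de Rham decomposition theorem then splits $\widetilde{M}$ isometrically as $\R\times N^3$, with the $\R$-factor the flow of $B$ and $N^3$ a leaf of $B^\perp$. To conclude part (i), one checks that $N^3$ is Sasakian: the Reeb field $\xi=JB$ is a unit Killing field tangent to $N^3$, $\frac{1}{c}\nabla\xi$ restricts to the orthogonal complex structure on the contact distribution $\{B,JB\}^\perp$, and $\iota_\xi g$ is the contact form, so that the two items of Definition \ref{Sasakian} hold; this is the classical structure theorem for Vaisman manifolds.

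For $n=3$ the starting point is the $\nabla^s$-parallel holomorphic field $X$ of Theorem \ref{thm2}, of constant positive norm; I normalize $|X|=1$ and adapt a local unitary frame $e_1,e_2,e_3$ with $e_1=X$. Since $\nabla^s X=0$ the Strominger connection forms in the direction of $X$ vanish, and since $g$ is SKL the torsion components $T^i_{jk}$ are covariantly constant and obey the pluriclosed first Bianchi identities recorded in \cite{ZZ}. From $\nabla^s X=0$ and $\nabla^s T=0$ I read off that the $J$-invariant distribution $\ker T=\{Y:\iota_Y T=0\}$ and its orthogonal complement are $\nabla^s$-parallel, which reduces the problem to classifying the covariantly-constant pluriclosed torsion tensor in complex dimension three. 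The alternative in part (ii) is then dictated by whether $T$ is degenerate. If $\ker T\neq0$ then, because $g$ is not K\"ahler, the kernel is exactly one complex dimensional (a kernel of complex dimension at least two would leave at most a complex line to support the torsion $3$-form, forcing it to vanish and $g$ to be K\"ahler); along this line the torsion $3$-form vanishes and $\nabla^s$ reduces to the Chern connection, so it splits off holomorphically as a K\"ahler curve $C$, leaving a non-K\"ahler SKL surface $M_1^2$ and giving $\widetilde{M}\cong M_1^2\times C$. If $T$ is non-degenerate, its covariantly-constant normal form should match the model torsion of a product of two Sasakian $3$-manifolds, and comparing with the explicit data underlying Corollary \ref{thm7} yields $\widetilde{M}=N_1^3\times N_2^3$.

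The hard part will be producing the genuine Riemannian product structure in the non-degenerate case, where the two Sasakian factors are \emph{not} $J$-invariant (since $J$ interchanges their Reeb fields). The $\nabla^s$-parallel, $J$-invariant pieces naturally present — the complex line spanned by $X$ together with the further splitting of its orthogonal complement induced by the non-degenerate parallel torsion — must be \emph{regrouped}, transversally to $J$, into the two Levi-Civita-parallel rank-three distributions $TN_1$ and $TN_2$; carrying out this regrouping and checking that each resulting leaf is Sasakian in the sense of Definition \ref{Sasakian} is the crux. Underlying this is the general tension that $\nabla^s$ and $\nabla$ differ by the skew torsion $\tfrac12 H$, so $\nabla^s$-parallel distributions need not be $\nabla$-parallel; one must use the SKL structure — K\"ahler-like curvature together with parallel torsion, which forces $H$ to decompose compatibly along the splitting — to bridge this gap. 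The accompanying algebraic step, namely establishing that in complex dimension three the only covariantly-constant pluriclosed torsion normal forms are the two types above, is where the (finite) computation is concentrated.
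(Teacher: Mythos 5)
Your outline follows the paper's proof in its essentials: adapt a unitary frame to the $\nabla^s$-parallel Lee-dual field, use $\nabla^sT=0$ and the K\"ahler-like identities to reduce the torsion to a constant normal form, separate the two alternatives of part (ii) according to whether the torsion has a kernel direction, and produce the splittings as de Rham products by regrouping distributions that are not $J$-invariant. Your $n=2$ argument via the Vaisman structure theorem is a legitimate shortcut (the paper instead runs the same frame computation as in dimension three), and your rank-one argument is essentially complete: the kernel line is Levi-Civita parallel because $\nabla-\nabla^s$ is built from the torsion, which that line annihilates, while $\theta^s$ is block diagonal since the eigenvalues $0$ and $\lambda$ of $\phi$ are distinct.

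The gap is that the two steps you yourself label as the crux are exactly the content of the proof and are not carried out. First, the normal form: degeneracy of the torsion in dimension $3$ (Lemma \ref{lemma8}, from $\eta_1=\eta_2=0$ forcing $T^1_{12}=T^2_{12}=0$) leaves only two constants $a=T^1_{13}$, $b=T^2_{23}$ with $a+b=\lambda$, and the identity \eqref{eq:P} with $i=j=1$, $k=\ell=2$ gives $a\overline{b}+b\overline{a}=0$, hence $a/|a|=\pm i\,b/|b|$ when $ab\neq 0$. This single phase relation, which comes from the full SKL package and not merely from $\nabla^sT=0$, is what makes the rank-two case work. Second, the Levi-Civita parallelism: writing $\nabla$ out explicitly (Lemma \ref{lemma10}) gives $\nabla e_1=\sigma_1'e_1+\overline{\varphi}_1\,(a\overline{e}_3-\overline{a}e_3)$, so the leakage of $\nabla e_1$ out of ${\mathbb C}e_1\oplus{\mathbb C}\overline{e}_1$ lands exactly on the real line spanned by $\xi=\frac{i}{\sqrt{2}|a|}(a\overline{e}_3-\overline{a}e_3)$, and similarly for $e_2$ and $\xi'=\frac{i}{\sqrt{2}|b|}(b\overline{e}_3-\overline{b}e_3)$. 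The relation $a\overline{b}+b\overline{a}=0$ says precisely that $\xi$ and $\xi'$ are orthogonal unit vectors spanning the real plane underlying ${\mathbb C}e_3$, which is what allows $\mathrm{span}\{Y,Z,\xi\}$ and $\mathrm{span}\{Y',Z',\xi'\}$ to be complementary $\nabla$-parallel rank-three distributions; the Sasakian structure on each factor is then read off from $\nabla\xi=\sqrt{2}|a|\,i\,(-\varphi_1e_1+\overline{\varphi}_1\overline{e}_1)$. Without these computations the proposed regrouping is only a plausible hope; with them it is a short verification, so you should supply them rather than defer them.
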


When $M^2$ is compact, Belgun's work \cite{Belgun} says that $\widetilde{M}$ is biholomorphic to either ${\mathbb C}^2\setminus \{ 0\}$, or ${\mathbb C}^2$, or ${\mathbb C}\times D$ (with $D$ the unit disc), while $M^2$ is a Hopf surface, a Kodaira surface, or a non-K\"ahler properly elliptic surface (which after a finite cover is a holomorphic fiber bundle over a curve of genus at least $2$ with fiber being a smooth elliptic curve). Similarly, when $M^3$ is compact, the factor $N^3$ for $M_1^2$ in the first case or the factors $N_1$, $N_2$ in the second case, are all co-compact in the sense of Belgun \cite{Belgun2}, hence will be one of the three types mentioned above.

It is natural to wonder in Theorem \ref{thm8} what can one say about the deck transformation group when $M^n$ is compact. In particular, when $n=3$ and $\widetilde{M}$ is the product of two Sasakian $3$-manifolds, would $M^3$ itself (or a finite cover of it) be the product of a two Sasakian $3$-manifolds? We believe that this should be the case, but we are unable to prove it at this point. In the special case when a compact $M^n$ is Strominger flat, the behavior of deck transformation group was discussed in \cite{WYZ}. 

Next, motivated by the notion of {\em locally conformal K\"ahler metric with potential} by Ornea and Verbitsky \cite{OV}, Belgun introduced in \cite{Belgun2} the notion of {\em Lee potential} (LP in short), and the notion of {\em Generalized Calabi-Eckmann} (GCE in short) for Hermitian manifolds, where he gave a full classification of all compact GCE threefolds in \cite[Theorem 4.5]{Belgun2}.

\begin{definition}[\cite{Belgun2}]
A Hermitian manifold $M^n$ is LP if the Gauduchon torsion $1$-form $\eta$ satisfies
\[\eta \neq 0, \ \ \ \ \partial \eta =0 , \ \ \ \ \partial \omega = c \,\eta \,\partial \overline{\eta}\]
where $c$ is a non-zero constant. A Hermitian manifold is GCE if it is LP and $\nabla^sT^s=0$, namely, the torsion of the Strominger connection $\nabla^s$ is parallel with respect to $\nabla^s$.
\end{definition}

We observe that when $n=3$, the condition $\nabla^sT^s=0$ actually implies the LP condition if the Hermitian metric is not balanced, hence GCE simply means $\nabla^sT^s=0$ and non-balanced in dimension $3$:

\begin{theorem} \label{thm9}
Let $M^3$ be a Hermitian manifold which is not balanced and the torsion of its Strominger connection $\nabla^s$ is parallel with respect to $\nabla^s$. Then it satisfies the LP condition in the sense of Belgun and thus $M^3$ is GCE. In particular, any non-K\"ahler SKL threefold is GCE.
\end{theorem}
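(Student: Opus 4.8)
The plan is to dispatch the two easy Lee–potential conditions and then reduce the entire statement to a single structural fact about the torsion in a well-chosen frame. Since $M^3$ is not balanced, the Gauduchon torsion $(1,0)$-form $\eta$ is not identically zero, which is the first LP condition. Because $\eta$ is the metric trace of the $(2,1)$-component $(T^s)^{2,1}$ of the torsion $3$-form, and both the metric and $T^s$ are $\nabla^s$-parallel, $\eta$ is itself $\nabla^s$-parallel; in particular $|\eta|$ is a positive constant and $\eta$ is nowhere zero. I would then fix a local unitary coframe $\{\vphi^1,\vphi^2,\vphi^3\}$ with $\vphi^1=\eta/|\eta|$, so that $\eta=\lambda\vphi^1$ with $\lambda=|\eta|$ constant, $\eta_2=\eta_3=0$, and the $\nabla^s$-parallel orthogonal splitting $T^{1,0}M=L\oplus L^\perp$ with $L=\langle e_1\rangle$ the parallel Lee line.

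Next I would convert the remaining LP conditions into statements about the torsion. From $\nabla^s\eta=0$ and the total skewness of $T^s$ one gets, up to sign, the identity $d\eta=\iota_{\eta^\sharp}T^s$. Taking types, the $(2,0)$-part reads $\partial\eta=\tfrac{1}{\lambda}\,\iota_{\eta^\sharp}(T^s)^{2,1}$, which is proportional to the Chern torsion $(2,0)$-form $\tau^1$ of the parallel direction; hence $\partial\eta=0$ is equivalent to $\tau^1=0$. Using that $(T^s)^{2,1}$ is a constant multiple of $\partial\omega$, together with the conjugate relation $\partial\overline{\vphi^1}\propto\iota_{e_1}(T^s)^{2,1}$, one sees that the Lee-potential identity $\partial\omega=c\,\eta\,\partial\overline\eta$ is equivalent to $(T^s)^{2,1}=\vphi^1\wedge\rho$ for a $(1,1)$-form $\rho$ supported on $L^\perp$: in that case $\iota_{e_1}(\vphi^1\wedge\rho)=\rho$, so $\partial\overline\eta=\bar\lambda\,\partial\overline{\vphi^1}$ is proportional to $\rho$, and $\partial\omega=\vphi^1\wedge\rho$ becomes $c\,\eta\wedge\partial\overline\eta$ with $c$ forced to be constant because $|\eta|$ is constant. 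Thus everything reduces to the single structural claim that, in the adapted frame, $(T^s)^{2,1}=\vphi^1\wedge\rho$ with $\rho$ a $(1,1)$-form on $L^\perp$; note this simultaneously yields $\tau^1=0$ (no $\overline{\vphi^1}$ appears) and the proportionality, so both outstanding LP conditions follow at once.

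The heart of the proof, and the step I expect to be hardest, is establishing this structural claim, i.e.\ the vanishing of the $\Lambda^2(L^\perp)^\ast$-part $T^s_{23\bar l}$ ($l=1,2,3$) of $(T^s)^{2,1}$. The trace normalisation $\eta_2=\eta_3=0$ already supplies the linear relations $T^s_{12\bar1}=T^s_{23\bar3}$ and $T^s_{13\bar1}=-T^s_{23\bar2}$, so it suffices to kill the three components $T^s_{23\bar l}$. Here the hypothesis $\nabla^sT^s=0$ must be used essentially, through the structure equations and Bianchi identities of \cite{ZZ}: writing $\nabla^s=\nabla^c+(\text{torsion})$ turns parallelism of $T^s$ into first-order equations for the Chern torsion whose right-hand sides are quadratic in the torsion, and in complex dimension $3$ there are only the three $(2,0)$-forms $\tau^1,\tau^2,\tau^3$, so the system is small enough to close. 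Combining these equations with the parallel splitting $L\oplus L^\perp$ and the integrability relation $d^2\vphi^1=0$ applied to $d\vphi^1=\tfrac1\lambda\iota_{\eta^\sharp}T^s$, I would deduce $T^s_{23\bar l}=0$, completing the claim.

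Once the structural claim is in hand, $M^3$ satisfies $\eta\neq0$, $\partial\eta=0$ and $\partial\omega=c\,\eta\,\partial\overline\eta$, hence is LP, and since $\nabla^sT^s=0$ by hypothesis it is GCE. Finally, any non-K\"ahler SKL threefold has $\nabla^sT^s=0$ and is pluriclosed by \cite{ZZ}, and such a metric is never balanced; thus it is non-balanced and the theorem applies to give that it is GCE.
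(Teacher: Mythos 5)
Your overall architecture is sound and in fact mirrors the paper's: $\eta$ is $\nabla^s$-parallel of constant positive norm, both LP identities reduce to the single structural statement that in a frame adapted to the Lee direction the only surviving Chern-torsion components are those of the form $T^j_{i\,\mathrm{Lee}}$ with $i,j$ transverse, and the trace relations coming from $\eta_2=\eta_3=0$ correctly reduce that statement to the vanishing of the three components $T^1_{23},T^2_{23},T^3_{23}$ (in your labeling with the Lee direction as index $1$). The closing deduction for SKL threefolds is also fine: a balanced SKL metric is K\"ahler, since $\eta=0$ forces $\phi=0$, hence $B=\phi+\phi^{\ast}=0$, hence $T=0$.

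The genuine gap is that the one step carrying all the mathematical content --- killing those three components --- is asserted ("I would deduce $T^s_{23\bar l}=0$") rather than proved. The trace relations cannot touch $T^1_{23}$ at all, and neither the parallel splitting $L\oplus L^{\perp}$ nor $d^2\varphi^1=0$ produces a pointwise algebraic vanishing on its own: exterior differentiation of a parallel form only yields differential consequences, not the quadratic constraint you need. What is actually required is the identity $\sum_r\eta_r T^r_{ij}=0$, which the paper extracts from the $(3,0)$-part of the first Bianchi identity of the Chern connection: at a point where $\theta^s=0$ one has $\partial\tau=2\,^t\!\gamma'\tau$, and $\nabla^sT=0$ converts this into the cyclic quadratic identity $\sum_r\{T^r_{ij}T^{\ell}_{rk}+T^r_{ki}T^{\ell}_{rj}+T^r_{jk}T^{\ell}_{ri}\}=0$, whose trace over $\ell=k$ gives $\sum_r\eta_rT^r_{ij}=0$. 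Since $\eta_1=\lambda\neq 0$ in your frame, this yields $T^1_{\ast\ast}=0$ (in particular $T^1_{23}=0$), and your trace relations then finish off $T^2_{23}$ and $T^3_{23}$. Until you supply this computation (or an equivalent one), the proof is incomplete precisely at its heart; with it, your argument coincides with the paper's.
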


The converse of the above is not true in general, as the SKL is equivalent to the parallelness of the torsion of $\nabla^s$ plus the pluriclosedness $\partial \overline{\partial }\omega =0$, which in this case is equivalent to $\overline{\partial }\eta \wedge \partial \overline{ \eta} =0$. Therefore, for $n=3$, the SKL condition is more restrictive than GCE, while for $n\geq 4$, they don't have much in common, as we shall see below.

Note that the LP condition basically says that the torsion tensor contains the same amount of information as the torsion $1$-form $\eta$, and for $n\geq 4$, the SKL manifolds do not satisfy the LP condition in general. We will introduce the concept of {\em degenerate torsion} for non-K\"ahler SKL manifolds in Section \ref{degtor}. It turns out that a SKL manifold of the dimension $2$ or $3$, or a SKL manifold that is LP, will always have degenerate torsion as shown in Lemma \ref{lemma8} and the following theorem. For such manifolds, we have the splitting result.

\begin{theorem} \label{thm10}
For a non-K\"ahler SKL manifold, the LP condition is equivalent to the degenerate torsion condition. Furthermore, if $M^n$ is a complete non-K\"ahler SKL manifold with degenerate torsion, then its universal cover is holomorphically isometric to a product $M_1^k \times M_2^{n-k}$, where $M_2$ is K\"ahler, and $M_1$ has complex dimension $k=2$ or $3$.
\end{theorem}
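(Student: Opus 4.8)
The plan is to prove the equivalence in the first assertion first, and then feed it into a de Rham-type argument for the splitting.

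For the equivalence, I would compute in a local unitary coframe $\{\varphi^i\}$ adapted to $\nabla^s$. Since $M^n$ is SKL, \cite[Corollary 4]{ZZ} tells us that $\nabla^sT^s=0$ and $\partial\overline{\partial}\omega=0$, so the torsion components are $\nabla^s$-parallel and the structure equations collapse to purely algebraic relations among them. The Gauduchon torsion $1$-form $\eta$ is, up to a fixed multiple, the trace of $T^s$, and the three LP relations $\eta\neq 0$, $\partial\eta=0$, $\partial\omega=c\,\eta\,\partial\overline{\eta}$ together assert exactly that the entire torsion tensor is reconstructed algebraically from $\eta$. The definition of degenerate torsion in Section \ref{degtor} is precisely the invariant reformulation of this collapse, so the content of the first statement is to match the two descriptions by expanding $\partial\overline{\partial}\omega=0$; the pluriclosed identity also yields $\overline{\partial}\eta\wedge\partial\overline{\eta}=0$, as already recorded after Theorem \ref{thm9}, which is the compatibility needed to close the computation.

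For the splitting I would let $E\subseteq TM$ be the smallest subbundle supporting the torsion $3$-form $T^s$, i.e. $\iota_ZT^s=0$ for every $Z\in E^\perp$, and note that $E$ is automatically $J$-invariant because $T^s$ is of pure type $(2,1)+(1,2)$. Because $\nabla^sT^s=0$, the kernel bundle $E^\perp$ is $\nabla^s$-parallel (the kernel of a parallel tensor is parallel), and since $\nabla^s$ is metric so is $E$. The decisive step is that $E^\perp$ is in fact parallel for the Levi-Civita connection as well: the two connections differ only by a term built from the torsion $3$-form, and this term annihilates any argument lying in $E^\perp$, so $\nabla_XY=\nabla^s_XY\in E^\perp$ for every $Y\in\Gamma(E^\perp)$; hence $E$ and $E^\perp$ are Levi-Civita-parallel, orthogonal, and $J$-invariant. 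Applying the de Rham decomposition theorem to the complete simply-connected $\widetilde M$ then produces an isometric product $\widetilde M=M_1^k\times M_2^{n-k}$ with $TM_1=E$ and $TM_2=E^\perp$; the $J$-invariance of the factors makes the splitting holomorphic, and on $M_2$ the torsion vanishes, so there $\nabla^s$ is torsion-free Hermitian, equals the Levi-Civita connection, and $M_2$ is K\"ahler.

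It remains to show $k=\dim_{\mathbb C}M_1\in\{2,3\}$, and this is where I expect the main difficulty to lie. The lower bound is automatic: $T^s$ is a nonzero $(2,1)+(1,2)$-form, which cannot exist on a support of complex dimension $1$, and $J$-invariance forces the support to have even real dimension, so $k\geq 2$. The upper bound $k\leq 3$ is the real content and is exactly what the degenerate-torsion hypothesis of Section \ref{degtor} is designed to supply: on the factor $M_1$ the torsion has full support yet, by the first part of the theorem, still satisfies the LP relations, and I would invoke the structural analysis behind Lemma \ref{lemma8} to bound the rank of the $(1,1)$-form $\partial\overline{\eta}$ using $\overline{\partial}\eta\wedge\partial\overline{\eta}=0$ together with $\partial\omega=c\,\eta\,\partial\overline{\eta}$. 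Parallelism of $T^s$ converts this rank bound into the dimension bound, the two admissible values corresponding to $M_1$ being the surface $N^3\times\mathbb{R}$ ($k=2$) or the threefold $N_1^3\times N_2^3$ ($k=3$) of Corollary \ref{thm7}. The delicate part is ruling out every larger rank, which requires the pluriclosed and parallel-torsion equations to be used jointly rather than in isolation.
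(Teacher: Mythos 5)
Your overall strategy is the same as the paper's (frame computation for the equivalence; a Levi-Civita-parallel, $J$-invariant distribution plus de Rham for the splitting), but the splitting step contains a concrete error. You take $E^\perp$ to be the kernel of the torsion $3$-form $T^s$ and assert that it is ``automatically $J$-invariant because $T^s$ is of pure type $(2,1)+(1,2)$.'' This is false. Writing $T^s=\tau'+\overline{\tau'}$ with $\tau'$ of type $(2,1)$ and $Z=W+\overline{W}$, the condition $\iota_Z T^s=0$ splits by type into $\iota_{\overline{W}}\tau'=0$, $\iota_W\overline{\tau'}=0$ and $\iota_W\tau'+\overline{\iota_W\tau'}=0$; the last equation only forces the $(1,1)$-form $\iota_W\tau'$ to be purely imaginary, not to vanish, whereas $J$-invariance of the kernel would require $\iota_W\tau'=0$. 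This is not a hypothetical loophole: in the rank-$1$ case of Lemma \ref{lemma6} the only nonzero torsion component is $T^{n-1}_{n-1,n}=\lambda>0$, so the torsion $3$-form is proportional to $\left(\sqrt{-1}\,\varphi_{n-1}\wedge\overline{\varphi}_{n-1}\right)\wedge\mu$ for a real $1$-form $\mu$ supported on the complex line spanned by $e_n$; its kernel is $\{Z:\iota_Z(\varphi_{n-1}\wedge\overline{\varphi}_{n-1})=0,\ \mu(Z)=0\}$, which has odd real dimension $2n-3$ and therefore cannot be $J$-invariant (it contains exactly one of the two real directions in the $e_n$-line). Consequently the de Rham factors produced by your distribution are not complex submanifolds, and the argument as written does not yield a holomorphic splitting.

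The repair is small and is in effect what the paper does: replace $\ker T^s$ by its $J$-invariant part $\ker T^s\cap J(\ker T^s)$, which under a strictly admissible frame is exactly the distribution $E\oplus\overline{E}$ spanned by $\{e_1,\dots,e_m,\overline{e}_1,\dots,\overline{e}_m\}$, i.e.\ the zero-eigenspace of $\phi$ and its conjugate. This is still $\nabla^s$-parallel (it is cut out by the $\nabla^s$-parallel tensors $T^s$ and $J$), it is still contained in $\ker T^s$, so your key observation --- that $\nabla-\nabla^s$ is built from $T^s$ and hence annihilates sections of the kernel --- applies verbatim and gives Levi-Civita parallelism; its orthogonal complement has complex dimension $k=n-m\in\{2,3\}$ by Lemma \ref{lemma6}. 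Two further remarks. First, the equivalence of LP and degenerate torsion is only described, not proved, in your proposal; the computation that closes it is $\partial\overline{\eta}=2\sum_{i,j}\phi^j_i\,\varphi_i\wedge\overline{\varphi}_j$ together with $-\sqrt{-1}\,\partial\omega=\sum_{i,j,k}T^j_{ik}\,\varphi_i\wedge\varphi_k\wedge\overline{\varphi}_j$ in an admissible frame, from which $\partial\omega=c\,\eta\,\partial\overline{\eta}$ is seen to say precisely that $T^j_{ik}=0$ unless $(i,k)=(j,n)$. Second, the bound $k\leq 3$ is not a residual difficulty to be settled at the end: it is already contained in Lemma \ref{lemma6}, where the identity (\ref{eq:P}) yields $a_i\overline{a}_k+a_k\overline{a}_i=0$ for $i\neq k$ and hence at most two nonzero eigenvalues of $\phi$; your proposed route through $\overline{\partial}\eta\wedge\partial\overline{\eta}=0$ leads to the same orthogonality relation but is not needed.
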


This result illustrates the point that on one hand, there seems to be a distinction between dimension $n\leq 3$ and dimensions $n\geq 4$ for SKL manifolds, as the torsion tensor is degenerate in the first case while non-degenerate in general in the second case. So the study of SKL manifolds in dimensions $n\geq 4$ might be considerably more complicated. On the other hand, the classification theorem for SKL complex nilmanifolds \cite{ZZ1} seems to suggest that, at least in some special cases, one could still expect SKL manifolds to obey some very restrictive pattern.

Note that complex nilmanifolds have trivial canonical line bundle, as it is easy to verify that $\varphi_1 \wedge \varphi_2 \wedge \cdots \wedge \varphi_n$ is $d$-closed by Salamon's Theorem \cite[Theorem 1.3]{Sal}, where $\varphi$ is a unitary left invariant coframe. Therefore, SKL complex nilmanifolds can be considered as high dimensional generalization of Kodaira surfaces. We wonder if they are basically the only compact non-K\"ahler SKL manifolds with trivial canonical line bundle, up to deformation of complex structures and SKL metrics, which motivates the following conjecture

\begin{conjecture}
Let $(M^n,g)$ be a compact SKL manifold with $g$ non-K\"ahler. If the canonical line bundle is trivial,  then $(M^n,g)$ can be deformed to a complex nilmanifold $(N^n,h)$, namely, $N=G/\Gamma$ where $G$ is a nilpotent Lie group and $\Gamma$ a cocompact lattice, and $h$ is a left invariant metric compatible with a left invariant complex structure on $G$. In this case, the step of $G$ is at most two, the left invariant complex structure on $G$ is necessarily abelian and its structure is given explicitly by \cite[Theorem 1]{ZZ1}.
\end{conjecture}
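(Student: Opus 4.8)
The plan is to use the triviality of the canonical bundle to force the Strominger connection to be flat, reduce to the classification of Strominger flat manifolds, and then extract the nilmanifold structure. Throughout I use that SKL is equivalent to $\partial\overline{\partial}\om=0$ together with $\nabla^sT^s=0$ by \cite{ZZ}, so that in a local $\nabla^s$-parallel unitary frame the torsion components $T^i_{jk}$ are constant. Triviality of $K_{M}$ provides a nowhere-vanishing holomorphic $n$-form $\Omega$, which is automatically $d$-closed. Expanding $d\Omega=0$ in the parallel coframe expresses the trace $\sum_k T^k_{ik}$, i.e. the Gauduchon torsion $1$-form $\eta$, through a globally defined exact term; combined with the K\"ahler-like symmetries of the Strominger curvature this should show that the first Ricci curvature of $\nabla^s$ vanishes identically, so that $(M^n,g)$ is CYT and, by Remark \ref{thm5}, $\mathrm{kod}(M^n)=-\infty$.

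The decisive and hardest step is to promote CYT to full Strominger flatness. After splitting off any flat K\"ahler de Rham factor through Theorem \ref{thm10} and the de Rham decomposition, one is reduced to a factor with no K\"ahler de Rham summand of dimension $>1$; for such a factor I would try to run an integral Bochner identity for the K\"ahler-like curvature of $\nabla^s$ on the compact manifold, using $\nabla^sT^s=0$ to control the torsion error terms and the vanishing first Ricci to absorb the curvature trace. This is precisely the content of the conjecture stated above, that such a CYT SKL factor is Strominger flat, which is established only for $n\le 3$ in Theorem \ref{thm6}; the new input here is that triviality of $K_M$ (hence a parallel, $d$-closed $\Omega$) should supply the missing pointwise identity in all dimensions. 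Granting flatness, \cite{WYZ} identifies the universal cover as a Samelson space, a Lie group $G$ with bi-invariant metric and a compatible left-invariant complex structure.

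It then remains to show that $G$ has no compact semisimple factor, so that $G$ is nilpotent. This is where triviality of $K_M$ must again be decisive, but it is the most delicate point: both nilpotent and semisimple Samelson algebras are unimodular, so $d\Omega=0$ for the Riemannian volume does not by itself distinguish them, and one must instead show that the left-invariant Samelson complex structure on a compact semisimple factor admits no nowhere-vanishing holomorphic $n$-form, i.e. that its canonical bundle is a non-trivial flat line bundle (as one expects for the central Calabi--Eckmann $S^3\times S^3$). Granting this, $G$ is nilpotent; compactness and Malcev's theorem present $M$ as a nilmanifold $G/\Gamma$, and \cite[Theorem 1]{ZZ1} forces the complex structure to be abelian and $G$ to have step at most two. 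The $\nabla^s$-parallel holomorphic vector field of Theorem \ref{thm2} is consistent with this picture, furnishing a central direction, and the deformation assertion would then follow by averaging the complex structure and metric to left-invariant data and invoking Kuranishi stability together with the normal form of \cite{ZZ1}.

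I expect two genuine obstacles. The first is the CYT $\Rightarrow$ flat implication in arbitrary dimension: vanishing of the first Strominger Ricci alone cannot force flatness, as Ricci-flat K\"ahler K3 surfaces show, so the argument must genuinely couple the non-K\"ahler parallel torsion to the parallel holomorphic volume form, and reproving the flatness conjecture even under the extra hypothesis of trivial $K_M$ is the crux. The second is the exclusion of the semisimple Samelson factors, equivalently the computation that a central Calabi--Eckmann summand such as $S^3\times S^3$ has non-trivial canonical bundle; pinning this down, and then upgrading a diffeomorphism onto a nilmanifold to an actual deformation of SKL structures via \cite{ZZ1}, are the remaining points of difficulty.
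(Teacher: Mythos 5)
The statement you are addressing is labelled a \emph{Conjecture} in the paper and no proof of it is given there, so there is nothing of the authors' to compare your argument against; I can only assess the proposal on its own terms, and its backbone breaks at the first step. You propose to deduce from the triviality of $K_M$ that the first Ricci curvature of $\nabla^s$ vanishes, i.e.\ that $(M^n,g)$ is CYT. But the paper itself records (Remark \ref{thm5}, quoting Ivanov--Papadopoulos) that a compact non-K\"ahler pluriclosed CYT manifold has all plurigenera equal to zero, hence $\mbox{kod}(M^n)=-\infty$, whereas a manifold with trivial canonical bundle has $P_1=h^0(K_M)=1$ and $\mbox{kod}(M^n)=0$. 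You even invoke that remark yourself, thereby deriving $\mbox{kod}(M^n)=-\infty$ in direct contradiction with your own hypothesis. So under the conjecture's assumptions the metric can \emph{never} be CYT; if your first step were correct it would show the hypothesis class is empty, which it is not, since a primary Kodaira surface is a compact non-K\"ahler SKL surface (Vaisman, $n=2$) with trivial canonical bundle. Concretely, a nowhere-vanishing holomorphic $n$-form only yields $\mbox{tr}\,\Theta^c=\partial\overline{\partial}f$ with $f=-\log\|\Omega\|^2_g$, i.e.\ vanishing of the Aeppli or Bott--Chern class of the Ricci form, not its pointwise vanishing.

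The same example defeats the rest of the strategy. Strominger flat manifolds are quotients of Samelson spaces, namely Lie groups carrying bi-invariant metrics; such groups are locally products of compact semisimple and abelian factors and are never non-abelian nilpotent. The models the conjecture aims at are genuinely step-two nilmanifolds (Kodaira surfaces and their higher-dimensional analogues from \cite{ZZ1}), which are therefore \emph{not} Strominger flat, so any route that funnels the manifold through flatness and the classification of \cite{WYZ} cannot reach the desired conclusion --- at best it could produce tori and compact semisimple quotients, which are exactly the spaces one must rule out. The two ``obstacles'' you flag at the end (CYT $\Rightarrow$ flat in all dimensions, and exclusion of semisimple Samelson factors) are thus not gaps to be filled but symptoms that the reduction is aimed at the wrong target. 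A genuine attack on this conjecture has to work directly with the non-flat parallel-torsion structure, e.g.\ via the degenerate/non-degenerate torsion analysis of Section \ref{degtor} and the explicit nilmanifold normal form of \cite{ZZ1}, rather than through CYT and Strominger flatness.
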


Here by deformation we mean a smooth path $(M_t, g_t)$ of compact SKL manifolds with trivial canonical line bundle,  which starts with $(M,g)$ at $t=0$ and ends with $(N,h)$ at $t=1$. Belgun's work \cite{Belgun} and \cite{Belgun2} says that, in the $n=2$  and $n=3$ cases, any SKL surface or threefold can be deformed to  homogenous ones, which have constant Strominger scalar curvature $S$.  For $n=2$, he also showed that any SKL metric can be deformed to one where the Lee form has unit length.

\section{Properties of SKL manifolds}\label{plr}

Given a Hermitian manifold $(M^n,g)$, denote by $\omega$ its K\"ahler form. There are several well studied generalizations to the K\"ahlerness condition $d\omega =0$:
\begin{enumerate}
\item $g$ is {\em balanced,} if $d(\omega^{n-1})=0$ (that is, the Gauduchon's {\em torsion $1$-form} $\eta =0$).
\item $g$ is {\em strongly Gauduchon,} if there is a $(n,n\!-\!2)$-form $\Psi$ such that $\partial\omega^{n-1}  =  \overline{\partial }\Psi$.
\item $g$ is {\em Gauduchon,} if $\partial \overline{\partial} \omega^{n-1} =0$.
\item $g$ is {\em Hermitian symplectic,} if there is a $(2,0)$-form $\alpha$ such that $\partial\alpha =0$ and $\partial \omega = - \overline{\partial }\alpha$.
\item $g$ is {\em pluriclosed,} if $\partial \overline{\partial} \omega =0$.
\end{enumerate}

The strongly Gauduchon condition was introduced by Popovici \cite{Popovici}. The condition $(iii)$ is not a restriction in the sense that, on a compact complex manifold, any Hermitian metric is conformal to a unique (up to constant multiple) Gauduchon metric.   Clearly, $(i)  \Rightarrow  (ii) \Rightarrow  (iii)$, and $(iv) \Rightarrow (v)$.

We begin with the following observation on a relationship between $(ii)$ and $(iv)$, which might be known to experts but should be of independent interest as well:

\begin{lemma}
Let $(M^n,g)$ be a Hermitian manifold that is Hermitian symplectic. Then $M^n$ admits a strongly Gauduchon metric $h$.
\end{lemma}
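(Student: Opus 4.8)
The plan is to build the strongly Gauduchon metric directly from the Hermitian symplectic data, with no compactness assumption. I would write $\Omega = \alpha + \omega + \overline{\alpha}$, which by hypothesis is a real $d$-closed $2$-form whose $(1,1)$-part $\omega$ is positive. Raising to the power $n-1$ and sorting by bidegree, the only surviving types are $(n,n-2)$, $(n-1,n-1)$, $(n-2,n)$: forms of bidegree $(p,q)$ with $p>n$ or $q>n$ vanish, and $p+q=2(n-1)$ forces $n-2\le p\le n$. Hence $\Omega^{n-1} = \Theta_{n,n-2} + \Theta_{n-1,n-1} + \overline{\Theta_{n,n-2}}$ with $\Theta_{n-1,n-1}$ real. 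Expanding $d\Omega^{n-1}=0$ and collecting the $(n,n-1)$-component gives the single identity I need, namely $\partial\Theta_{n-1,n-1} = -\overline{\partial}\Theta_{n,n-2}$, since $\partial\Theta_{n,n-2}$ has bidegree $(n+1,n-2)=0$.

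The key observation is that $\Theta_{n-1,n-1}$ is a strictly positive $(n-1,n-1)$-form. Since $\alpha,\omega,\overline{\alpha}$ all have even degree they commute, and the multinomial expansion gives $\Theta_{n-1,n-1} = \omega^{n-1} + \sum_{a\ge 1} c_a\,\big(\alpha^a\wedge\overline{\alpha^a}\big)\wedge\omega^{n-1-2a}$ with positive constants $c_a = \binom{n-1}{a,\,n-1-2a,\,a}$. For each $a$ the form $\alpha^a\wedge\overline{\alpha^a}$ is a (weakly) positive $(2a,2a)$-form (here $i^{(2a)^2}=1$), and wedging it with the positive form $\omega^{n-1-2a}$ keeps it positive; so every summand with $a\ge1$ lies in the closed positivity cone, while the leading term $\omega^{n-1}$ lies in its interior. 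Using that for the extreme bidegree $(n-1,n-1)$ the notions of weak and strong positivity coincide, and that adding closed-cone elements to an interior point stays in the interior, I conclude $\Theta_{n-1,n-1} \ge \omega^{n-1} > 0$ is strictly positive at every point.

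Then I would invoke Michelsohn's theorem that $\eta\mapsto\eta^{n-1}$ is a bijection from positive $(1,1)$-forms onto strictly positive $(n-1,n-1)$-forms, with smooth inverse: there is a unique Hermitian metric $h$ with $\omega_h^{n-1} = \Theta_{n-1,n-1}$, globally defined on $M^n$. By the identity above, $\partial\omega_h^{n-1} = \partial\Theta_{n-1,n-1} = \overline{\partial}\big(\!-\Theta_{n,n-2}\big)$, so setting $\Psi := -\Theta_{n,n-2}$, an $(n,n-2)$-form, yields $\partial\omega_h^{n-1} = \overline{\partial}\Psi$, which is exactly the strongly Gauduchon condition.

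I expect the positivity statement to be the only genuine obstacle: one must verify carefully that $\alpha^a\wedge\overline{\alpha^a}$ is honestly (weakly) positive and that the cone arithmetic ``interior point $+$ cone element $=$ interior point'' is valid for $(n-1,n-1)$-forms, which is where the coincidence of the positivity notions in extreme bidegree and the precise form of Michelsohn's duality enter. Everything else reduces to bookkeeping of bidegrees in $d\Omega^{n-1}=0$.
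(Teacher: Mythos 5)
Your argument is correct and follows essentially the same route as the paper's proof: expand $(\alpha+\omega+\overline{\alpha})^{n-1}$ by bidegree, extract the $(n,n-1)$-component of the closedness identity to get $\partial\Theta_{n-1,n-1}=-\overline{\partial}\Theta_{n,n-2}$, verify $\Theta_{n-1,n-1}\geq\omega^{n-1}>0$ via positivity of $\alpha^a\wedge\overline{\alpha}^a$, and apply Michelsohn's bijection to produce $h$. Your extra care with the positivity-cone bookkeeping is a welcome refinement of the paper's terser treatment, but the underlying proof is the same.
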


\begin{proof}
Write $\omega =\omega_g$ for the K\"ahler form of $g$. By definition, there is a $(2,0)$-form $\alpha$ on $M^n$ such that $\partial \alpha =0$ and $\partial \omega = - \overline{\partial }\alpha$. Consider the $d$-closed real $2$-form
$$ \chi = \alpha + \omega + \overline{\alpha}.$$
It follows easily that the $d$-closed, real, $(2n-2)$-form $\chi^{n-1}$ decomposes as
$$ \chi^{n-1} = \Psi + \Omega + \overline{\Psi}, $$
where $\Omega$ is the $(n\!-\!1,n\!-\!1)$-part and $\Psi$ the $(n,n\!-\!2)$-part. The analysis of the $(n,n\!-\!1)$-part of the form $d\chi^{n-1}=0$ yields
$$ \partial \Omega + \overline{\partial} \Psi = 0. $$
It is easy to see that $\alpha \overline{\alpha} \geq 0$, and for any $k\geq 1$, $\alpha^k \overline{\alpha}^k = (\alpha \overline{\alpha})^k \geq 0$. Note that only when $j$ is even, $(\alpha + \overline{\alpha})^j$ may contain $(p,p)$-components. Then we have
\begin{eqnarray*}
\Omega & = & \sum_{k\geq 0} C^{2k}_{n-1} \omega^{n-1-2k} C^k_{2k} (\alpha \overline{\alpha })^k   \\
& = & \omega^{n-1} + C^1_2C^2_{n-1} \omega^{n-3} \alpha \overline{\alpha } + C^2_4 C^4_{n-1} \omega^{n-5} (\alpha \overline{\alpha })^2 + \cdots  \\
& \geq & \omega^{n-1} \ > \ 0.
\end{eqnarray*}
As is well-known, any positive $(n\!-\!1,n\!-\!1)$-form  can be written as the $(n\!-\!1)$-th power of a positive $(1,1)$-form, therefore we have
a Hermitian metric $h$ on $M$ such that $\omega_h^{n-1} = \Omega$, and $h$ is strongly Gauduchon as
$$ \partial (\omega_h^{n-1}) = \partial \,\Omega = - \overline{\partial }\Psi . $$
\end{proof}

\begin{proof}[\textbf{Proof of Theorem \ref{thm1}}]
Suppose that $(M^n,g)$ is a compact non-K\"ahler SKL manifold. Denote by $\omega$ the K\"ahler form of $g$. It follows that $\partial \overline{\partial }\omega =0$ from \cite[Theorem 1]{ZZ}, which implies that it represents an Aeppli cohomology class $[\omega]_A$ in $H^{1,1}_A(M)$. If it is trivial, then there will be $(1,0)$-forms $\beta$, $\sigma$ on $M^n$ such that
$$\omega  = \overline{\partial } \sigma + \partial \overline{ \beta}.$$
Since $\overline{\omega}=\omega$, we may assume that $\beta = \sigma$. Consider the $(2,0)$-form $\alpha =\partial \sigma$. We have  $\partial \alpha =0$ and $\partial \omega = - \overline{\partial }\alpha$. Hence $\omega$ is Hermitian symplectic. By the above lemma, we know that $M^n$ admits a strongly Gauduchon metric $h$, contradicting with \cite[Proposition 3]{ZZ}. Therefore it follows that $[\omega]_A \neq 0$ in $H^{1,1}_A(M)$. Meanwhile, since $M^n$ can not admit any Hermitian symplectic metric, we know that $M^n$ does not satisfy the $\partial \overline{\partial}$-Lemma, as the $\partial \overline{\partial}$-Lemma would turn any pluriclosed metric into a Hermitian symplectic one. \end{proof}

Recall that Fu, Wang and Wu \cite{Fu-Wang-Wu1} introduced the notion of {\em $k$-Gauduchon} for Hermitian manifold $(M^n,g)$, where $k$ is a positive integer less than $n$. It is defined by $\partial \overline{\partial } (\omega ^k) \wedge \omega ^{n-k-1} =0$, where $\omega$ is the K\"ahler form of $g$. When $k=n-1$, this is just the original Gauduchon condition. They studied the existence problem for $k$-Gauduchon metrics within a conformal class of a compact manifold, which generalizes Gauduchon's classic result for the $k=n-1$ case. For each $1\leq k\leq n-2$, their results in particular imply that the $k$-Gauduchon metrics, if exists, are unique (up to constant multiples) within a conformal class. In \cite{ZZ}, we showed that any SKL metric is pluriclosed and Gauduchon, by what follows, it is also $k$-Gauduchon for any $k$.

\begin{remark}
Let $(M^n,g)$ be a Hermitian manifold such that $g$ is both pluriclosed and Gauduchon. Then $g$ is $k$-Gauduchon for any $1\leq k\leq n-1$.
In particular, a SKL metric $g$ is necessarily $k$-Gauduchon for any $1\leq k\leq n-1$.
\end{remark}

\begin{proof}
The proof is straight forward. Since $g$ is both pluriclosed and Gauduchon, it follows that
\[\partial \overline{\partial } \omega=0\quad \text{and}\quad \partial \omega \wedge \overline{\partial } \omega \wedge \omega^{n-3}=0.\]
For any integer $1\leq  k \leq n-1$, it yields that
\begin{eqnarray*}
\partial \overline{\partial } (\omega ^k) \wedge \omega ^{n-k-1} & = & k \partial (\overline{\partial }\omega \wedge \omega^{k-1}) \wedge \omega ^{n-k-1} \\
& = & k\partial \overline{\partial } \omega  \wedge \omega^{n-2} + k(k-1) \partial \omega \wedge \overline{\partial } \omega \wedge \omega^{n-3} \\
& = & 0.
\end{eqnarray*}
It follows from \cite[Proposition 3 and Remark 1]{ZZ} that a SKL metric is both pluriclosed and Gauduchon.
\end{proof}
Therefore, when $M^n$ is compact, we know that any SKL metric is unique (up to constant multiples) in its conformal class. The more interesting part is that, when $M^n$ is non-compact but $n\geq 3$, any SKL metric is still unique within its conformal class, which will be postponed to Theorem \ref{thm3}.

To prove the next a few theorems, let us assume that $(M^n,g)$ is a SKL manifold. Denote by $\nabla^s$ the Strominger connection, and by $T^j_{ik}$ the components of the Chern torsion under a unitary frame $e$. Let $\varphi$ be the dual coframe and $\eta$ be the Gauduchon torsion $1$-form \cite{Gauduchon}, defined by the identity $\partial \omega^{n-1} = -2\eta \,\omega^{n-1}$. We will use the same notations as in \cite{YZ} and \cite{ZZ}, and also \cite{Zheng} for as a general reference. It follows from \cite{ZZ} that $\nabla^sT=0$, $\partial \overline{\partial}\omega =0$, and
\begin{eqnarray}
&& \sum_r \eta_r T^r_{ik}=0 \label{etaT}\\
&& P_{ik}^{j\ell} := \sum_r \big\{ T_{ik}^r \overline{ T^r_{j\ell}} +   T^j_{ir} \overline{ T^k_{\ell r}} + T^{\ell}_{kr} \overline{ T^i_{j r}} - T^j_{kr} \overline{ T^i_{\ell r}} - T^{\ell}_{ir} \overline{ T^k_{j r}} \big\} =0 \label{eq:P}
\end{eqnarray}
for any indices $i$, $k$, $j$, $\ell$. Meanwhile, $B=\phi + \phi^{\ast}$ as shown in \cite[Lemma 10 and 12]{ZZ}, where
\begin{equation}\label{BPhi}
B_{ij} = \sum_{r,s} T^j_{rs} \overline{ T^i_{rs}}, \ \ \ \phi_i^j = \sum_r \overline{\eta}_r T^j_{ir},
\end{equation}
and $\eta$, $\phi$, $B$ are all parallel under $\nabla^s$.

\begin{proof}[\textbf{Proof of Theorem \ref{thm2}}]
Let $(M^n,g)$ be a non-K\"ahler SKL manifold. Consider the vector field
\[ X_{\!\eta} = \sum_r \overline{\eta}_r e_r\]
on $M^n$. It is easy to see that it is independent of the choice of the local unitary frame $e$, hence is globally defined. Let us show that $X_{\!\eta}$ is parallel with respect to the Strominger connection $\nabla^s$, namely, $\nabla^sX_{\!\eta}=0$. To see this, fix a point $x\in M$ and choose a local unitary frame $e$ in a neighborhood of $x$ such that the connection matrix $\theta^s$ of $\nabla^s$ vanishes at $x$. At the point $x$, it yields that
$$ \nabla^s_v X_{\!\eta} = v(\overline{\eta}_r) e_r = \overline{ \eta_{r, \overline{v}} } \,e_r = 0, $$
since $\nabla^s\eta =0$, where $v$ is any $e_i$ or $\overline{e}_i$.

Next we show that $X_{\!\eta}$ must be a holomorphic vector field, which means that $\nabla^c_{\overline{e}_i} X_{\!\eta} =0$ for any $i$, where $\nabla^c$ is the Chern connection. At the point $x$, since $\theta^s=0$, the connection matrix $\theta$ for $\nabla^c$ is equal to $-2\gamma$, where
$$ \gamma_{ij} = \sum_k \big\{ T^j_{ik}\varphi_k - \overline{T^i_{jk} } \,\overline{\varphi}_k \big\} .$$
At the point $x$,  the structure equation gives us
$$ d \varphi = -\,^t\!\theta \varphi + \tau = 2\,^t\!\gamma \varphi +\tau = - \tau - 2 \overline{\gamma '} \varphi.$$
Here we have used the fact that $^t\!\gamma'\varphi = - \tau$, where $\gamma'$ is the $(1,0)$-part of $\gamma$. It follows that, at $x$,
\begin{eqnarray}
&& \partial \varphi_r = - \tau_r = - \sum_{i,k} T^r_{ik} \varphi_i \varphi_k  \label{dvarphi}\\
&& \overline{\partial} \varphi_r = - 2 \overline{\gamma'_{rk}} \,\varphi_k =  -2 \sum_{k,j} \overline{ T^k_{rj}}\, \overline{\varphi}_j \varphi_k \label{dbarvarphi}
\end{eqnarray}
Since $\nabla^sX_{\!\eta}=0$, it follows that, at $x$,
$$ \nabla^c_{\overline{e}_i} X_{\!\eta} = -2 \overline{\eta}_r \gamma_{rk}(\overline{e}_i) e_k = 2 \overline{\eta}_r \,\overline{ T^r_{ki} } \,e_k  =0  $$
for any $i$, where the last equality is due to (\ref{etaT}). This shows that $X_{\eta}$ is a holomorphic vector field, therefore we have completed the proof of Theorem \ref{thm2}.
\end{proof}

The covariant derivative of $X_{\!\eta}$ with respect to the Riemannian connection $\nabla$ will be calculated for the later use.

\begin{lemma}
Let $(M^n,g)$ be a non-K\"ahler SKL manifold and $X_{\!\eta}$ be the vector field dual to the torsion $1$-form $\eta$. Then under any unitary frame $e$, it yields that
\begin{eqnarray}
\nabla_{e_i} X_{\!\eta} & = & - \sum_{r,k}    \overline{\eta}_r T^k_{ri} e_k  = \sum_k \phi_i^k e_k \\
\nabla_{\overline{e}_i} X_{\!\eta} & = & \sum_{r,k}  \overline{\eta}_r ( \overline{T^r_{ki}} \, e_k + T^i_{rk} \overline{e}_k ) = - \sum_k \phi_k^i \overline{e}_k
\end{eqnarray}
\end{lemma}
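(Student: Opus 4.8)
The plan is to leverage the fact, established in the proof of Theorem \ref{thm2}, that $X_{\!\eta}$ is parallel for the Strominger connection, and to transfer this information to the Riemannian connection $\nabla$ by means of the totally skew-symmetric torsion of $\nabla^s$. Recall that for any metric connection whose torsion tensor $T^s$ is totally skew-symmetric, the general comparison with the Levi-Civita connection degenerates to $\nabla_X Y = \nabla^s_X Y - \tfrac12 T^s(X,Y)$, because the three cyclic terms built from a $3$-form collapse to a single one. Taking $Y = X_{\!\eta}$ and using $\nabla^s X_{\!\eta}=0$ gives at once the global identity
\[ \nabla_v X_{\!\eta} = -\tfrac12\, T^s(v,X_{\!\eta}) = -\tfrac12 \sum_r \overline{\eta}_r\, T^s(v,e_r), \]
valid for any tangent vector $v$. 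Hence the lemma is reduced to expressing the Strominger torsion, regarded as a vector-valued $2$-form, in terms of the Chern torsion components $T^j_{ik}$, and then extracting the $e_k$- and $\overline{e}_k$-components.

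To carry out this reduction I would use the tensorial identity $\nabla^s = \nabla^c + 2\gamma$, which follows from the relation $\theta = -2\gamma$ computed at the point $x$ in the proof of Theorem \ref{thm2} together with the fact that the difference of two connections is a tensor (so the identity, being frame-intrinsic, holds on all of $M$). Combined with $T^c(e_i,e_k) = 2\sum_j T^j_{ik}e_j$ (the Chern torsion, which vanishes on mixed and on $(0,2)$ arguments) and the formula $T^s(X,Y) - T^c(X,Y) = 2\gamma(X)Y - 2\gamma(Y)X$, a short computation in which $\gamma_{ij} = \sum_k(T^j_{ik}\varphi_k - \overline{T^i_{jk}}\,\overline{\varphi}_k)$ is evaluated on the frame vectors yields
\[ T^s(e_i,e_k) = -2\sum_j T^j_{ik}\,e_j, \qquad T^s(e_i,\overline{e}_k) = 2\sum_j \overline{T^i_{jk}}\,e_j - 2\sum_j T^k_{ji}\,\overline{e}_j, \]
with $T^s(\overline{e}_i,\overline{e}_k)$ the conjugate of the first. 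Here one must use that $2\gamma$ acts diagonally, sending the $(1,0)$-part of the frame to itself and the $(0,1)$-part to itself.

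Substituting these into the reduction formula completes the proof. For $v=e_i$ only the pure $(2,0)$-component contributes, giving $\nabla_{e_i}X_{\!\eta} = \sum_{r,k}\overline{\eta}_r T^k_{ir}e_k$, which equals $\sum_k\phi_i^k e_k$ by the definition of $\phi$ in (\ref{BPhi}) and equals $-\sum_{r,k}\overline{\eta}_r T^k_{ri}e_k$ by the antisymmetry $T^k_{ir}=-T^k_{ri}$. For $v=\overline{e}_i$ the mixed component contributes both a $(1,0)$- and a $(0,1)$-part, producing $\nabla_{\overline{e}_i}X_{\!\eta} = \sum_{r,k}\overline{\eta}_r(\overline{T^r_{ki}}\,e_k + T^i_{rk}\overline{e}_k)$; its $(0,1)$-part is $-\sum_k\phi_k^i\overline{e}_k$, again by the definition of $\phi$ and antisymmetry.

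The only genuinely delicate point I anticipate is the sign-and-factor bookkeeping of the middle step: fixing the convention for the $\mathbb{C}$-bilinear extension of $g$, and correctly propagating the two factors of $2$ (one from $T^c(e_i,e_k)=2\sum_j T^j_{ik}e_j$, one from $\nabla^s=\nabla^c+2\gamma$) through the mixed component $T^s(e_i,\overline{e}_k)$. A final observation worth isolating is that the $(1,0)$-part $\sum_{r,k}\overline{\eta}_r\overline{T^r_{ki}}\,e_k = \overline{\sum_r \eta_r T^r_{ki}}\;e_k$ of $\nabla_{\overline{e}_i}X_{\!\eta}$ vanishes identically by the SKL identity (\ref{etaT}); this is precisely why the final formula for $\nabla_{\overline{e}_i}X_{\!\eta}$ is purely of type $(0,1)$, and it is the single place where the SKL hypothesis enters beyond the parallelism $\nabla^s X_{\!\eta}=0$.
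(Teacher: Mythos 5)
Your proposal is correct and follows essentially the same route as the paper: both arguments rest on $\nabla^s X_{\!\eta}=0$ together with the difference tensor between $\nabla$ and $\nabla^s$ expressed through the Chern torsion components, and then invoke (\ref{etaT}) and the definition (\ref{BPhi}) of $\phi$ for the final identifications. The only cosmetic difference is that you re-derive the frame identity $\nabla e_r = \nabla^s e_r - \sum_k \gamma_{rk} e_k + \sum_{k,i} T^i_{rk}\overline{\varphi}_i\overline{e}_k$ from the general principle $\nabla = \nabla^s - \tfrac12 T^s$ for connections with totally skew torsion, whereas the paper quotes that identity directly; your sign and factor bookkeeping checks out.
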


\begin{proof} This is the direct consequence of $\nabla^s X_{\!\eta}=0$ and the fact that
$$ \nabla e_r = \nabla^s e_r - \sum_k \gamma_{rk} e_k + \sum_{k,i} T^i_{rk} \overline{\varphi}_i \,\overline{e}_k,$$
together with the equality (\ref{etaT}).
\end{proof}

Let us discuss the uniqueness problem for SKL metrics within a conformal class.

\begin{proof}[\textbf{Proof of Theorem \ref{thm3}}]
Let $(M^n,g)$ be a SKL manifold, where $n\geq 3$. Suppose that $\tilde{g}=e^{2u}g$ is another SKL metric conformal to $g$, where $u$ is real valued smooth function on $M^n$. We want to show that $u$ must be constant. Denote by $\omega$, $\tilde{\omega } = e^{2u}\omega$ the K\"ahler form of $g$, $\tilde{g}$, respectively. Let $\varphi$ be a local unitary coframe for $g$ and $\theta^s$ be the connection matrix of the Strominger connection $\nabla^s$ of $g$ under $\varphi$, with $\tilde{\varphi} = e^u \varphi$, which is the associated unitary coframe of $\tilde{g}$, and $\tilde{\theta}^s$ being the $\nabla^s$-matrix under $\tilde{\varphi}$. Let us also denote by $e$ the local unitary frame of $g$ dual to $\varphi$, with $\tilde{e} = e^{-u}e$, which is the associated local unitary frame of $\tilde{g}$ dual to $\tilde{\varphi}$. Since
\begin{eqnarray*}
 d\tilde{\varphi} & = &  du \,\tilde{\varphi} + e^u(-\,^t\!\theta \varphi + \tau ) \\
 & = & \{ (\overline{\partial }u - \partial u) I - \,^t\!\theta \} \tilde{\varphi} + \{ 2\partial u \tilde{\varphi} + e^u \tau \} \\
 & = &  - \,^t\!\tilde{\theta } \tilde{\varphi} + \tilde{\tau} ,
\end{eqnarray*}
where $\theta$ and $\tau$ are the connection matrix and column vector of the torsion of the Chern connection of $g$ under $\varphi$ respectively, with $\tilde{\theta}$ and $\tilde{\tau}$ being those of $\tilde{g}$ under $\tilde{\varphi}$.
It follows that
$$ \tilde{\theta} = \theta + (\partial u - \overline{\partial }u)I  \quad \text{and} \quad \tilde{\tau } = e^u(\tau + 2 \partial u\,\varphi).$$
From this, it yields that
\begin{eqnarray*}
\tilde{T}^j_{ik} & = & e^{-u} \{ T^j_{ik} + u_i \delta_{jk} - u_k \delta_{ji} \}, \\
\tilde{\eta}_k & = & e^{-u} \{ \eta_k - (n-1)u_k \},
\end{eqnarray*}
where $u_k=e_k(u)$. With $P=\tilde{\theta}^s-\theta^s$ denoted by the difference of the Strominger connection matrix of $\tilde{g}$ and $g$ under the respective unitary coframes $\tilde{\varphi}$ and $\varphi$, it follows that
\begin{eqnarray*}
P_{ik} & = & \tilde{\theta}_{ik} - \theta_{ik} + 2 \tilde{\gamma}_{ik} - 2 \gamma_{ik} \\
& = & ( \partial u - \overline{\partial } u ) \delta_{ik} + 2( \tilde{T}^k_{ij} \tilde{\varphi}_j - \overline{ \tilde{T}^i_{kj} \tilde{\varphi}_j     } ) - 2 ( T^k_{ij} \varphi_j - \overline{ T^i_{kj} \varphi_j } ) \\
& = & 2u_i \varphi_k - \partial u \delta_{ik} - 2 \overline{u}_k \overline{\varphi}_i + \overline{\partial} u \delta_{ik}.
\end{eqnarray*}
As in \eqref{BPhi} above, it holds that $B_{i\overline{j}} = \phi_i^j + \overline{\phi_j^i}$ for a SKL metric.
Then the following equality is established
\begin{eqnarray*}
\tilde{\phi}_i^j  & = & \tilde{T}^j_{ir} \overline{ \tilde{\eta}_r } \ = \ e^{-2u} ( T^j_{ir} + u_i\delta_{jr} - u_r\delta_{ij}) \,( \overline{\eta}_r - (n-1) \overline{u}_r )\\
& = & e^{-2u} \{ \phi_i^j + u_i \overline{\eta}_j - u_r\overline{\eta}_r \delta_{ij}
- (n-1) T^j_{ir} \overline{u}_r - (n-1)u_i \overline{u}_j + (n-1) |u_r|^2 \delta_{ij} \},
\end{eqnarray*}
which yields that
\begin{eqnarray*}
e^{2u} \tilde{B}_{i\overline{j}}  & = & e^{2u} ( \tilde{\phi}^j_i + \overline{ \tilde{\phi}^i_j  } )  \ = \
B_{i\overline{j}} + ( u_i \overline{\eta}_j + \eta_i \overline{u}_j) -2(n-1) u_i \overline{u}_j \\
& & - (n-1)(  T^j_{ir} \overline{u}_r + \overline{ T^i_{jr} } u_r )  - (u_r\overline{\eta}_r + \eta_r \overline{u}_r ) \delta_{ij} + 2 (n-1) |u_r|^2 \delta_{ij}.
\end{eqnarray*}
On the other hand, the definition of $\tilde{B}_{i\overline{j}} $ leads to
\begin{eqnarray*}
e^{2u} \tilde{B}_{i\overline{j}}  & = & e^{2u} \,\tilde{T}^j_{rs} \overline{ \tilde{T}^i_{rs}}  \ = \ (T^j_{rs} + u_r\delta_{js} - u_s \delta_{jr} ) \, ( \overline{T^i_{rs}} + \overline{u}_r \delta_{is} - \overline{u}_s \delta_{ir} ) \\
& = & B_{i\overline{j}} - 2 u_r \overline{ T^i_{jr} } - 2 \overline{u}_r T^j_{ir} - 2 u_i \overline{u}_j + 2 |u_r|^2 \delta_{ij}.
\end{eqnarray*}
By the comparison of the above two expressions, it yields that
\begin{equation}
 (u_i \overline{\eta}_j + \eta_i \overline{u}_j) -2(n-2)u_i \overline{u}_j = (n-3)\sum_r ( T^j_{ir} \overline{u}_r + \overline{ T^i_{jr} } u_r ) + \delta_{ij} \sum_r ( u_r \overline{\eta}_r + \eta_r \overline{u}_r - 2(n-2) |u_r|^2 ). \label{eq:BBcompare}
\end{equation}
Note that when both $g$ and $\tilde{g}$ are K\"ahler, the conformal factor $u$ is necessarily constant. Hence, we may assume that $g$ is non-K\"ahler. The $\nabla^s$-parallelness of $\eta$ enables us to choose the local unitary frame $e$ such that
\[\eta_i=0\quad \text{for} \quad 1\leq i\leq n-1\quad \text{and}\quad \eta_n =\lambda>0\quad \text{where}\ \lambda\ \text{is a global constant}.\]
It follows from \eqref{etaT} that $T^n_{ik}=0$ for any indices $i$, $k$. After $i=j=n$ is set in the identity \eqref{eq:BBcompare}, it yields that
$$ \lambda (u_n + \overline{u}_n) - 2(n-2) |u_n|^2 = \lambda (u_n + \overline{u}_n) - 2 (n-2) \sum_r |u_r|^2 , $$
or equivalently,
$$ 2(n-2) \sum_{r=1}^{n-1} |u_r|^2 = 0. $$
It can be concluded, for $n\geq 3$,
\begin{equation}
u_1 = u_2 = \cdots = u_{n-1}=0. \label{eq:onlyun}
\end{equation}
When (\ref{eq:onlyun}) is plugged into (\ref{eq:BBcompare}) and $1\leq i=j\leq n-1$ is set, it follows that
\begin{equation*}
0 = (n-3) ( T^i_{in} \overline{u}_n + \overline{T^i_{in} } u_n ) +   \lambda (u_n + \overline{u}_n) -2(n-2) |u_n|^2.
\end{equation*}
After $i$ is summed up from $1$ to $n-1$, it leads to
\begin{equation}
\lambda (u_n + \overline{u}_n) = (n-1) |u_n|^2. \label{eq:un}
\end{equation}

The next step is to prove that $u_n=0$, hence $u$ must be a constant. Actually, the equality $\nabla^s\eta=0$ implies
$$ \eta_{i,j} = e_j(\eta_i) - \eta_r \,\theta^s_{ir}(e_j) =0, \ \ \ \eta_{i,\overline{j}} = \overline{e}_j(\eta_i) - \eta_r \,\theta^s_{ir}(\overline{e}_j) =0,$$
with similar equalities established for $\tilde{\eta}$. For $p \in M$, after the unitary frame $e$ such that $\theta^s=0$ at $p$ is applied and the previous expression for $P =  \tilde{\theta}^s - \theta^s$ is used, it yields that
\begin{eqnarray*}
0 & = & \tilde{\eta}_{i,j} \ = \ \tilde{e}_j (\tilde{\eta}_i) - \tilde{\eta}_r \,\tilde{\theta}^s_{ir}(\tilde{e}_j) \\
& = & e^{-u} \{ e_j \big( \tilde{\eta}_i \big) - \tilde{\eta}_r  \,\tilde{\theta}^s_{ir}(e_j) \} \\
& = & e^{-u}  e_j \{ e^{-u} ( \eta_i - (n-1)u_i)\}  - e^{-2u} \{ \eta_r -(n-1)u_r\} \, \{ 2u_i \delta_{jr} - u_j \delta_{ir}\} \\
& = & e^{-2u} \{ - u_j (\eta_i -(n-1)u_i) + \eta_{i,j} - (n-1)u_{i,j} - 2u_i(\eta_j-(n-1)u_j) + u_j(\eta_i-(n-1)u_i) \} \\
& = & e^{-2u} \{ - (n-1)u_{i,j} -2u_i ( \eta_j-(n-1)u_j) \},
\end{eqnarray*}
which implies that
\begin{equation}
u_{i,j} = 2 u_iu_j - \frac{2}{n-1}u_i\eta_j = 2u_n\delta_{in}\delta_{jn}(u_n-\frac{\lambda}{n-1}), \label{eq:uij}
\end{equation}
where the index $j$ after the comma in $u_{i,j}$ denotes the covariant derivative with respect to $\nabla^s$. Similarly, the calculation of $\tilde{ \eta}_{i, \overline{j}} $ yields
\begin{equation}
u_{i,\overline{j}} = 2 \overline{u}_n ( u_n -\frac{\lambda}{n-1} ) \, (\delta_{in} \delta_{jn} - \delta_{ij} ) . \label{eq:uijbar}
\end{equation}
With $i=n$ set in (\ref{eq:uijbar}), it follows that $u_{n\overline{j}}=0$ for any $j$. It also holds that $u_{n,n}=2u_n(u_n-\frac{\lambda}{n-1})$, after $i=j=n$ is set in (\ref{eq:uij}). Let $c=\frac{\lambda }{n-1}>0$ and one has, by (\ref{eq:un}), $c\,(u_n+\overline{u}_n)=|u_n|^2$,
which implies that \[cu_{n,n}=u_{n,n}\overline{u}_n.\]
Note that $u_n \neq c$ at each point, since it would lead to a contradiction that $2c^2=c^2$, from (\ref{eq:un}). It follows that
$u_{n,n}$ identically vanishes, which yields the same holds for $u_n$. Hence $u$ is a constant. Therefore we have completed the proof of Theorem \ref{thm3}.
\end{proof}

As the proof above is  local in nature, it yields that

\begin{remark}
Let $(M^n,g)$ be a SKL manifold with $g$ not K\"ahler. If $n\geq 3$, then $g$ is never locally conformal K\"ahler.
\end{remark}

\begin{proof}[\textbf{Proof of Theorem \ref{thm4}}]
Let $(M^n,g)$ be a non-K\"ahler SKL manifold with $n\geq 3$, where $g$ itself cannot be locally conformal K\"ahler by the remark above. 
To prove the theorem, let us assume the contrary that $M^n$ does admit a Vaisman metric $\hat{g}$. Denoted by $\omega$ and $\hat{\omega}$ the K\"ahler form of $g$ and $\hat{g}$, respectively.  By the very definition, $\hat{\omega}$ is a locally conformal K\"ahler metric, whose (real) Lee form $\hat{\vartheta}$ is $\nabla$-parallel, where $\nabla$ is the Riemannian connection of $\hat{g}$. Since $\hat{g}$ is locally conformal K\"ahler, we have $\hat{\vartheta}^{1,0}= \frac{1}{n-1}\hat{\eta}$ and
\begin{equation}\label{Teta}
\hat{T}^j_{ik} = \frac{1}{n-1} \big( \delta_{ji} \hat{\eta}_k - \delta_{jk} \hat{\eta}_i \big),
\end{equation}
where $\hat{T}^j_{ik}$ and $ \hat{\eta}_k$ are the components of the Chern torsion $\hat{T}$ and the Gauduchon's torsion $1$-form $\hat{\eta}$ of $\hat{\omega}$ under a unitary frame $e$ with respect to $\hat{\omega}$, with the dual coframe denoted by $\varphi$. From \cite[Lemma 7]{ZZ}, the $\nabla$-parallelenss of $\hat{\vartheta}$ is equivalent to the equalities
\[\begin{cases} \hat{\eta}_{i,k}   =   -\hat{\eta}_r \hat{T}^r_{ik}, \\
\hat{\eta}_{i,\overline{k}}   =  \overline{\hat{\eta}}_r \hat{T}^{k}_{ir} - \hat{\eta}_r \overline{\hat{T}^i_{k r}},
\end{cases}\]
for any $i$, $k$, where the index after comma means covariant derivative with respect to the Strominger connection $\nabla^s$ of $\hat{g}$.
It yields from \eqref{Teta} that $\hat{\eta}_{i,k}=0$, $\hat{\eta}_{i,\overline{k}}=0$, that is, $\nabla^s \hat{\eta}=0$, $\nabla^s \hat{T}=0$.
By the definition of $\hat{\eta}$, it follows that for $n \geq 3$,
\[ \begin{aligned}
\overline{\partial} \partial \hat{\omega}^{n-2} &= (n-2) \overline{\partial} (\partial \hat{\omega} \wedge \hat{\omega}^{n-3}) \\
&= - 2(n-2)  \overline{\partial} ( \hat{\vartheta}^{1,0} \wedge \hat{\omega}^{n-2}) \\
&= - \frac{2(n-2)}{n-1}  \overline{\partial} ( \hat{\eta} \wedge \hat{\omega}^{n-2}) \\
&= - \frac{2(n-2)}{n-1} (\overline{\partial} \hat{\eta} + \frac{2(n-2)}{n-1} \hat{\eta} \wedge \overline{\hat{\eta}}) \wedge \hat{\omega}^{n-2}. \\ \end{aligned}\]
Since $g$ is SKL and thus $\partial \overline{\partial} \omega=0$ by \cite{ZZ}, it forces that
\begin{equation}\label{pluric}
\begin{aligned}
0 & = \int_{M} \partial \overline{\partial} \omega \wedge \hat{\omega}^{n-2} \\
& = \int_{M} \partial (\overline{\partial} \omega \wedge \hat{\omega}^{n-2}) + \overline{\partial}(\omega \wedge \partial \hat{\omega}^{n-2})
- \omega \wedge \overline{\partial} \partial \hat{\omega}^{n-2} \\
& = \int_{M}  \frac{2(n-2)}{n-1} (\overline{\partial} \hat{\eta} + \frac{2(n-2)}{n-1} \hat{\eta} \wedge \overline{\hat{\eta}}) \wedge \hat{\omega}^{n-2} \wedge \omega.
\end{aligned}\end{equation}
From $\nabla^s \hat{\eta}=0$ and the equality \eqref{Teta}, it yields that $|\hat{\eta}|^2 =\sum_{r}|\hat{\eta}_r|^2$ is a constant, denoted by $\lambda^2$ with $\lambda \geq 0$, and
\[\begin{aligned}
\overline{\partial} \hat{\eta} &= \sum_{i,j}-(\hat{\eta}_{i,\bar{j}} + 2 \sum_r \hat{\eta}_r\overline{\hat{T}^i_{jr}}) \varphi_i \wedge \overline{\varphi}_j \\
&= \frac{2}{n-1}( \sqrt{-1} \lambda^2 \hat{\omega} + \hat{\eta} \wedge \overline{\hat{\eta}}).
\end{aligned}\]
After a possible unitary change of the frame $\varphi$, still denoted by $\varphi$, it can be assumed that
\[ \omega = \sqrt{-1} \sum_i \lambda_i \varphi_i \wedge \overline{\varphi}_i,\]
where $\{\lambda_i\}_{i=1}^n$, globally defined real positive continuous functions on $M^n$, are the eigenvalues of $\omega$ with respect to $\hat{\omega}$. Then it follows clearly that
\[ \sqrt{-1} (\overline{\partial} \hat{\eta} + \frac{2(n-2)}{n-1} \hat{\eta} \wedge \overline{\hat{\eta}}) \wedge \hat{\omega}^{n-2} \wedge \omega
=-2 \left(\sum_r \lambda_r |\hat{\eta}_r|^2 \right)\frac{\hat{\omega}^n}{n(n-1)},\]
which yields by \eqref{pluric} that $\hat{\eta}_r=0$ for any $r$, and thus $\hat{\omega}$ is a balanced metric. However, a balanced metric can never appear on a compact non-K\"ahler SKL manifold by \cite[Theorem 3]{ZZ}. This completes the proof of Theorem \ref{thm4}.
\end{proof}

\section{SKL manifolds with degenerate torsion}\label{degtor}

Let us focus on the SKL manifolds that are three dimensional, or more generally, when its torsion tensor has a lot of degeneracy and thus some terminologies will be introduced.

\begin{definition}
Let $(M^n,g)$ be a non-K\"ahler SKL manifold. A local unitary frame $e$ is said to be \textbf{admissible}, if $X_{\!\eta}=\lambda e_n$ for $\lambda >0$ and under $e$ the matrix $ \phi = (\phi_i^j)$ is diagonal.
\end{definition}

Note that $\lambda = |\eta| = |X_{\!\eta}|$ is a positive constant, and $e_n$ is globally defined, but for $1\leq i\leq n-1$, $e_i$ is only locally defined, so such frames are not uniquely determined in general. First we claim that such frames always exist locally:

\begin{lemma}
Let $(M^n,g)$ be a non-K\"ahler SKL manifold. For any $x\in M$, there always exists an admissible frame $e$ in a neighborhood of $x$.
\end{lemma}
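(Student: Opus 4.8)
I need to prove that for a non-Kähler SKL manifold $(M^n,g)$, around any point $x$ there exists an admissible local unitary frame $e$: one where $X_\eta = \lambda e_n$ with $\lambda > 0$ and where the matrix $\phi = (\phi_i^j)$ is diagonal.

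Let me recall the setup. We have:
- $X_\eta = \sum_r \overline{\eta}_r e_r$ is a globally defined vector field, parallel under $\nabla^s$, with $|X_\eta| = |\eta| = \lambda$ a positive constant.
- $\phi_i^j = \sum_r \overline{\eta}_r T^j_{ir}$.
- $B_{ij} = \sum_{r,s} T^j_{rs}\overline{T^i_{rs}}$.
- $B = \phi + \phi^*$ (so $B$ is Hermitian).
- $\eta, \phi, B$ are all parallel under $\nabla^s$.
- The relation $\sum_r \eta_r T^r_{ik} = 0$ (equation etaT).

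**First step: normalize so $X_\eta = \lambda e_n$.** Since $\lambda = |\eta|$ is a positive constant (not just locally), I can choose a unitary frame where $\eta_i = 0$ for $i < n$ and $\eta_n = \lambda$. This is a standard pointwise unitary rotation: at each point $\eta$ is a nonzero covector, and I rotate the frame so that $\eta$ points in the $n$-th direction. This can be done smoothly in a neighborhood of $x$ because $\eta$ is a smooth nowhere-zero section. In this frame, $X_\eta = \sum_r \overline{\eta}_r e_r = \overline{\lambda} e_n = \lambda e_n$ (since $\lambda$ is real). Note $\phi_i^n = \sum_r \overline{\eta}_r T^n_{ir} = \overline{\lambda} T^n_{in} = 0$ because equation (etaT) with the choice $\eta_n = \lambda$ forces $T^n_{ik} = 0$ for all $i,k$.

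**Second step: understand the structure of $\phi$.** With $\eta_n = \lambda$ and $\eta_i = 0$ otherwise, I compute $\phi_i^j = \sum_r \overline{\eta}_r T^j_{ir} = \lambda T^j_{in}$. The key point is that $\phi$ restricted to the first $(n-1)$ indices should be something I can diagonalize. Since $B = \phi + \phi^*$ is Hermitian, and I want $\phi$ diagonal, I should exploit an additional relation that makes $\phi$ itself Hermitian (or normal) on the relevant subspace. The crucial relation is presumably equation (eq:P), the vanishing of $P_{ik}^{j\ell}$, which should yield that $\phi$ is Hermitian — or at least that $\phi$ commutes with $\phi^*$, making it normal and hence diagonalizable by a unitary change of frame fixing $e_n$.

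**The plan in detail.** First I fix the smooth unitary rotation putting $\eta$ along $e_n$, giving $X_\eta = \lambda e_n$, $T^n_{ik}=0$, and $\phi_i^n = 0$. Because $\phi_i^n = 0$ and (by the Hermitian structure of $B$ and the relation $B = \phi+\phi^*$) also $\phi_n^j$ vanishes for $j < n$ while $\phi_n^n = \lambda T^n_{nn} = 0$, the matrix $\phi$ is block diagonal: it vanishes in the last row and column entirely, so $\phi$ is effectively an $(n-1)\times(n-1)$ matrix $\phi' = (\phi_i^j)_{1\le i,j\le n-1}$ together with zeros. The main work is to show $\phi'$ is a normal matrix. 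I expect that the quadratic torsion identity (eq:P) — specifically a suitable contraction of $P_{ik}^{j\ell}=0$ against $\eta$ and $\overline\eta$ — produces exactly the relation $\phi'(\phi')^* = (\phi')^*\phi'$, i.e. normality. Indeed, contracting $P^{j\ell}_{ik}\overline{\eta}_?\,\eta_?$ type expressions and using (etaT) to kill the cross terms is precisely the kind of manipulation that turns the fourth-order torsion identity into a second-order statement about $\phi$. Once $\phi'$ is normal, the spectral theorem gives a unitary $U \in U(n-1)$, varying smoothly near $x$ (since eigenprojections of a smooth normal matrix field are smooth where multiplicities are locally constant — and I can arrange this on a small enough neighborhood, or invoke that $\phi'$ is actually Hermitian so that its eigenvalues are automatically real and smooth), diagonalizing $\phi'$. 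Applying $U$ to the first $n-1$ frame vectors while fixing $e_n$ keeps $X_\eta = \lambda e_n$ and makes $\phi$ diagonal, producing the admissible frame.

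**The main obstacle.** The delicate point is the diagonalization in a \emph{smooth} (not merely pointwise) way and establishing normality/Hermiticity of $\phi'$. If $\phi'$ is genuinely Hermitian, then its eigenvalues are real; but smooth diagonalization of a Hermitian matrix field still fails at points where eigenvalues cross unless one is careful. I expect the resolution to be that one does \emph{not} need the eigenvalues to be distinct: one can diagonalize $\phi'$ smoothly on a neighborhood by choosing a smooth orthonormal frame adapted to the (locally constant-rank) eigenspaces, shrinking the neighborhood if necessary, or — more cleanly — by noting that since $\phi$ is $\nabla^s$-parallel, $\phi$ has locally constant eigenvalues, so the eigenspace decomposition is a $\nabla^s$-parallel (hence smooth) subbundle decomposition, and choosing a unitary frame adapted to it is immediate. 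This parallelism of $\phi$ is the lever that makes the smooth diagonalization free of the usual eigenvalue-crossing trouble, and I would lean on it as the cleanest route. Hence the essential content is: (i) the rotation to $e_n$ (routine), (ii) normality/Hermiticity of $\phi'$ from (eq:P) and (etaT) (the computational heart), and (iii) smooth diagonalization using $\nabla^s$-parallelism of $\phi$ (the conceptual payoff).
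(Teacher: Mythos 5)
Your proposal is correct and follows essentially the same route as the paper: align $e_n$ with $X_{\!\eta}$, deduce normality of $\phi$ from the identity (\ref{eq:P}), and then diagonalize by a unitary change of $\{e_1,\dots,e_{n-1}\}$ fixing $e_n$. The contraction you anticipate is exactly setting $k=\ell=n$ in $P^{j\ell}_{ik}=0$ and multiplying by $\lambda^2$, which after $T^n_{\ast\ast}=0$ kills three of the five terms and yields $\sum_r\{\phi_i^r\overline{\phi_j^r}-\phi_r^j\overline{\phi_r^i}\}=0$, i.e. $\phi\phi^{\ast}=\phi^{\ast}\phi$; your additional observation that the $\nabla^s$-parallelism of $\phi$ makes the diagonalization smooth (eigenvalues locally constant, eigenspaces parallel subbundles) is a point the paper leaves implicit.
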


\begin{proof}
For any given $x\in M$, let $e$ be a local unitary frame with $e_n=\frac{1}{\lambda} X_{\!\eta}$, where $\lambda = |\eta|$. Under the frame $e$, it follows that $\eta_i=0$ for each $i<n$, $\eta_n=\lambda$, and $T^n_{\ast \ast}=0$ by (\ref{etaT}). Let us take $k=\ell =n$ in (\ref{eq:P}) and multiply the equality by $\lambda^2$, which yields that
$$ \sum_r \{ \phi_i^r \overline{ \phi_j^r }- \phi_r^j \overline{ \phi_r^i  }  \} =0 $$
for any $i$, $j$. That is, the matrix $\phi = (\phi_i^j)$ satisfies $\phi \phi^{\ast} = \phi^{\ast} \phi$ and thus it is normal. Hence by a unitary change of $\{ e_1 , \ldots , e_{n-1}\}$ with $e_n$ fixed, we can make $\phi$ diagonal, since $\phi_n^j=\phi_i^n=0$, and thus obtain a local unitary frame that is admissible.
\end{proof}

\begin{remark}\label{admissibleT}
Let $(M^n,g)$ be a non-K\"ahler SKL manifold. Under an admissible frame $e$, it yields that $T^n_{\ast\ast}=0$ and $T^j_{in}=0$ for $i\neq j$.
\end{remark}

\begin{definition}\label{degtor_def}
A non-K\"ahler SKL manifold $(M^n,g)$ is said to have \textbf{degenerate torsion}, if under any admissible frame $e$, $T^{\ast }_{ik}=0$ for any $i,k <n$.
\end{definition}

\begin{remark}\label{degenerateT}
Under an admissible frame $e$ of a non-K\"ahler SKL manifold $(M^n,g)$ with degenerate torsion, the only possibly non-zero components of the torsion $T$ are $T^i_{in}$ for $i \leq n-1$.
\end{remark}

From now on, let us write $a_i=T^i_{in}$. Then under any admissible frame $e$, we always have
$$\sum_i a_i=\lambda \quad \mbox{and} \quad \phi_i^j = \lambda a_i \delta_{ij}\ \ \mbox{with}\ \ a_n=0.$$
If the torsion $T$ degenerates, then by letting $i=j < k=\ell < n$ in (\ref{eq:P}), we get
$$ a_i \overline{a}_k + a_k \overline{a}_i = 0, \ \ \ \forall \  1\leq i < k \leq n-1. $$
That is, the vector $\begin{pmatrix} a_i \\ \overline{a_i}\end{pmatrix}$ is orthogonal to $\begin{pmatrix} a_k \\ \overline{a_k}\end{pmatrix}$ for any $1\leq i\neq k \leq n-1$. So the set $\{ a_1, \ldots , a_{n-1}\}$ can have at most two non-zero elements.

\begin{lemma} \label{lemma6}
Let $(M^n,g)$ be a non-K\"ahler SKL manifold. It follows that under an admissible frame $e$, $a_i$ is a globally defined constant for each $1\leq i \leq n$. When $(M^n,g)$ has degenerate torsion, there always exists an admissible frame $e$ such that one of the following two cases occur:
\begin{enumerate}
\item the rank 1 case:\quad $a_1 = \cdots = a_{n-2}=0, \quad a_{n-1}=\lambda$,
\item the rank 2 case:\quad $a_1 = \cdots = a_{n-3}=0, \quad a_{n-2} = \frac{\lambda}{2}(1+ \rho ), \quad a_{n-1} = \frac{\lambda}{2}(1-\rho )$,
\end{enumerate}
where $\rho $ is a globally defined constant with positive imaginary part and with $|\rho |=1$.
\end{lemma}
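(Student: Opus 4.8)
The plan is to prove the two assertions separately: the constancy of the $a_i$ is valid for any non-K\"ahler SKL manifold and rests only on the $\nabla^s$-parallelness of $\phi$, while the normal form uses the degenerate torsion hypothesis through the orthogonality relation already derived above. For the constancy, I would start from the fact recalled after \eqref{BPhi} that $\phi$ is $\nabla^s$-parallel. Under an admissible frame $\eta_r=\lambda\delta_{rn}$, so $\phi_i^j=\lambda T^j_{in}$; admissibility forces $\phi$ to be diagonal, i.e. $T^j_{in}=0$ for $i\neq j$ (Remark \ref{admissibleT}) and $\phi_i^j=\lambda a_i\delta_{ij}$ with $a_n=0$, as already recorded. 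Thus the diagonal entries $\lambda a_1,\dots,\lambda a_{n-1},0$ are exactly the eigenvalues of the globally defined, $\nabla^s$-parallel endomorphism $\phi$. Since $\phi$ is parallel, each scalar $\tr(\phi^k)$ satisfies $d\,\tr(\phi^k)=\nabla^s\tr(\phi^k)=0$, hence is constant on the connected manifold $M$; therefore the coefficients of the characteristic polynomial of $\phi$ are constant and the eigenvalue multiset $\{\lambda a_1,\dots,\lambda a_{n-1},0\}$ is independent of the point. As each $a_i$ is a smooth function taking values in the finite set obtained by dividing these constant eigenvalues by the positive constant $\lambda$, it is locally constant, hence a genuine constant. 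This proves the first assertion.

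Assume now that the torsion is degenerate. The relation $a_i\overline{a}_k+a_k\overline{a}_i=0$ for $1\leq i\neq k\leq n-1$, established just above from \eqref{eq:P}, says that the vectors $(a_i,\overline{a_i})$ are pairwise Hermitian-orthogonal in $\C^2$; since $\C^2$ admits at most two nonzero mutually orthogonal vectors, at most two of $a_1,\dots,a_{n-1}$ are nonzero, and at least one is nonzero because $\sum_i a_i=\lambda>0$. A unitary permutation of $e_1,\dots,e_{n-1}$ with $e_n$ fixed is again admissible, so after relabeling I may place the nonzero entries at the last available indices. If exactly one is nonzero, the sum relation forces it to equal $\lambda$, which after relabeling is the rank $1$ case $a_{n-1}=\lambda$.

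In the rank $2$ case, with $a_{n-2},a_{n-1}\neq 0$, $a_{n-2}+a_{n-1}=\lambda$ and $\re(a_{n-2}\overline{a_{n-1}})=0$, I set $\rho=(a_{n-2}-a_{n-1})/\lambda$, so that $a_{n-2}=\tfrac{\lambda}{2}(1+\rho)$ and $a_{n-1}=\tfrac{\lambda}{2}(1-\rho)$. The orthogonality gives $|a_{n-2}-a_{n-1}|^2=|a_{n-2}|^2+|a_{n-1}|^2=|a_{n-2}+a_{n-1}|^2=\lambda^2$, hence $|\rho|=1$, and $\rho$ is constant since the $a_i$ and $\lambda$ are. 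If $\im\rho=0$ then $\rho=\pm1$ and one of $a_{n-2},a_{n-1}$ would vanish, a contradiction, so $\im\rho\neq 0$; swapping $e_{n-2}$ and $e_{n-1}$ replaces $\rho$ by $-\rho$, which lets me arrange $\im\rho>0$. This yields the rank $2$ normal form.

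The linear algebra in $\C^2$ and the normalization of $\rho$ are routine. The step I expect to require the most care is the first assertion: passing from the constancy of the eigenvalue multiset of the parallel tensor $\phi$ to the constancy of each individual diagonal entry $a_i$ in a merely local admissible frame, where repeated eigenvalues could in principle let the index labeling jump. This is handled by the continuity-into-a-discrete-set argument on the connected manifold, but it is the point where one must take care not to conflate the global parallel tensor $\phi$ with its frame-dependent diagonalization.
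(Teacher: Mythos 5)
Your proposal is correct and follows essentially the same route as the paper: the constancy of the $a_i$ comes from their being eigenvalues of the globally defined $\nabla^s$-parallel tensor $\phi$, and the normal form comes from the pairwise orthogonality relation $a_i\overline{a}_k+a_k\overline{a}_i=0$ combined with $\sum_i a_i=\lambda$. Your write-up is in fact somewhat more careful than the paper's at the two delicate points (passing from constancy of the eigenvalue multiset to constancy of each locally defined $a_i$, and the normalization $|\rho|=1$, $\im\rho>0$), but the underlying argument is the same.
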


\begin{proof}
It is clear that $\{a_i\}_{i=1}^n$ are the eigenvalues of $\phi$ under an admissible frame $e$ and unitary transformation
between admissible frames don't change eigenvalues, which implies that $\{a_i\}_{i=1}^n$ are globally defined functions.
The $\nabla^s$-parallelness of the tensor $\phi$ forces $\{a_i\}_{i=1}^n$ to be constants.

When $(M^n,g)$ has degenerate torsion, we already know that there can be at most two non-zero elements amongst those $a_i$. Since their sum is $\lambda$, it follows that either one of them is $\lambda$ while the rest are zero, or exactly two of them are non-zero. In the latter case, since their sum is $\lambda$ and the real part of one times the conjugate of the other vanishes, they must be in the above given form for some number $\rho$ with norm $1$. Through a permutation of the first $n-1$ elements of $e$ if necessary, we get the required presentation. The conclusion that $\rho$ is a global constant, independent of the choice of the local frame, follows from that of $\{a_i\}_{i=1}^n$.
\end{proof}

When $(M^n,g)$ is a non-K\"ahler SKL manifold with degenerate torsion, for an admissible frame $e$ satisfying the requirement of the above lemma, the two cases are separated by the rank of the matrix $\phi $, which is either $1$ or $2$. Hence we will call these two cases respectively the {\em rank $1$} or {\em rank $2$} case.

Denote by $E$ the $\nabla^s$-parallel distribution in $T^{1,0}M$ generated by $\{ e_1, \ldots , e_{n-2}\}$ in the rank $1$ case or $\{ e_1, \ldots , e_{n-3}\}$ in the rank $2$ case, respectively. We will call $E$ the {\em kernel distribution}  of the torsion $T$. Clearly, $E$ is $\nabla^s$-parallel. Let $m=n-2$ in the rank $1$ case and $m=n-3$ in the rank $2$ case. Later in the proof of Theorem \ref{thm10}, we will show that $E \oplus \overline{E}$ is parallel with respect to the Riemannian connection $\nabla$, hence at the universal covering level it gives the de Rham decomposition and splits off a K\"ahler factor of complex dimension $m$.

\begin{definition}\label{sadmb}
For a non-K\"ahler SKL manifold $(M^n,g)$ with degenerate torsion, we will call an admissible local frame $e$ \textbf{strictly admissible}, if under $e$ the components of $\phi$ takes the special form as in Lemma \ref{lemma6} above.
\end{definition}

\begin{remark}\label{sadmissibleT}
It follows that under an strictly admissible frame, the non-zero components of the torsion $T$ are $T^i_{in}$ for $m < i \leq n-1$,
where $m$ is defined right before the definition above.
\end{remark}

We observe that for a non-K\"ahler SKL manifold $(M^n,g)$, under an admissible frame $e$, the connection matrix $\theta^s$ for $\nabla^s$ is block diagonal according to different eigenvalues of $\phi$. This is clear since $\phi$ is $\nabla^s$-parallel, so the eigenspaces for distinct eigenvalues of $\phi$ form $\nabla^s$-parallel distributions in $T^{1,0}M$. Alternatively, we may consider the covariant derivatives of the torsion under $\nabla^s$, which yields that
$$ 0= T^j_{ik, \ell} = e_{\ell}(T^j_{ik}) + \sum_r \{ T^j_{rk } \,\theta^s_{ir}(e_{\ell}) + T^j_{ir } \,\theta^s_{kr}(e_{\ell}) - T^r_{ik } \,\theta^s_{rj}(e_{\ell}) \} .$$
Let $k=n$, we get $(a_j-a_i)\, \theta^s_{ij}(e_{\ell}) =0$. Similarly, by using $T^j_{ik, \overline{\ell}} =0$, we get $(a_j-a_i) \, \theta^s_{ij}(\overline{e}_{\ell}) =0$. Therefore, it follows that $(a_j-a_i) \,\theta^s_{ij} =0$, which implies that $\theta^s_{ij}=0$ whenever $a_i\neq a_j$, $1\leq i, j\leq n-1$. Note that by our choice of $e$, $\theta^s_{nj}=\theta^s_{jn}=0$ for all $j$. This also shows that $\theta^s$ is block diagonal. In the special case when the torsion is degenerate, we can go one step further:

\begin{lemma}\label{lemma7}
If $(M^n,g)$ is a non-K\"ahler SKL manifold with degenerate torsion, then locally there exists a strictly admissible frame $e$ so that under $e$ the connection and curvature matrices of $\nabla^s$ are block diagonal in the following form,
\begin{enumerate}
\item the rank $1$ case:
\[\theta^s = \left[ \begin{array}{ccc} \ast_{n\!-\!2} &  & \\  & \alpha  & \\ & & 0 \end{array} \right] ,\ \ \  \ \ \ \ \Theta^s =  \left[ \begin{array}{ccc} \star_{n\!-\!2} &  & \\  & d\alpha  & \\ & & 0 \end{array} \right], \]
\item the rank $2$ case:
\[ \theta^s = \left[ \begin{array}{cccc} \ast_{n\!-\!3} &  & & \\  & \beta  & & \\    && \alpha  & \\ & & & 0 \end{array} \right] , \ \ \ \ \ \ \ \Theta^s = \left[ \begin{array}{cccc} \star_{n\!-\!3} &  & & \\  & d\beta  & & \\    && d\alpha  & \\ & & & 0 \end{array} \right],  \]
\end{enumerate}
where $\alpha$ and $\beta$ are local $1$-forms satisfying $\alpha +\overline{\alpha}=0$, $\beta + \overline{\beta}=0$, and
\begin{equation} d\alpha = f \varphi_{n\!-\!1} \wedge \overline{\varphi}_{n\!-\!1}, \ \ \ d\beta = h \varphi_{n\!-\!2} \wedge \overline{\varphi}_{n\!-\!2}, \label{eq:Theta}
\end{equation}
for some local real valued functions $f$ and $h$.
\end{lemma}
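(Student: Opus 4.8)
The plan is to start from a strictly admissible frame $e$ furnished by Lemma \ref{lemma6}, and to read off the block structure of $\theta^s$ purely from the eigenvalue data of $\phi$ together with the $\nabla^s$-parallelness established just before the statement. Recall that $(a_j-a_i)\,\theta^s_{ij}=0$ forces $\theta^s_{ij}=0$ whenever $a_i\neq a_j$ for $1\le i,j\le n-1$, while $X_{\!\eta}=\lambda e_n$ being $\nabla^s$-parallel gives $\theta^s_{nj}=\theta^s_{jn}=0$ for all $j$. In the rank $1$ case the value $a_{n-1}=\lambda$ is distinct from $a_1=\cdots=a_{n-2}=0$, so $e_{n-1}$ decouples into a $1\times1$ block with $\alpha:=\theta^s_{n-1,n-1}$; in the rank $2$ case the three values $0$, $a_{n-2}$, $a_{n-1}$ are pairwise distinct (since $|\rho|=1$ and $\im\rho>0$ exclude $\rho\in\{0,1,-1\}$), so both $\alpha:=\theta^s_{n-1,n-1}$ and $\beta:=\theta^s_{n-2,n-2}$ split off as $1\times1$ blocks. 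That $\alpha+\overline{\alpha}=0$ and $\beta+\overline{\beta}=0$ is immediate from the skew-Hermitian nature of the connection matrix of the metric connection $\nabla^s$.

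Next I would pass the block structure from the connection to the curvature: since $\Theta^s=d\theta^s-\theta^s\wedge\theta^s$ and block-diagonality is preserved by $d$ and by the wedge product, $\Theta^s$ inherits the same block pattern as $\theta^s$. On each $1\times1$ block the quadratic term vanishes because $\theta^s_{ii}\wedge\theta^s_{ii}=0$, so by the second structure equation $\Theta^s_{n-1,n-1}=d\alpha$ and $\Theta^s_{n-2,n-2}=d\beta$; the unconstrained blocks $\ast_{n-2}$, $\ast_{n-3}$ and the corresponding curvature blocks need no further identification. It then remains only to pin down the two $1$-forms $d\alpha$ and $d\beta$.

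The heart of the argument is to identify the single surviving component of $d\alpha$ using the Kähler-like symmetries. Because $(M^n,g)$ is SKL, the Strominger curvature is of type $(1,1)$ and satisfies $R^s_{i\bar j k\bar l}=R^s_{k\bar j i\bar l}$, so I may write $d\alpha=\sum_{k,l}R^s_{n-1\,\overline{n-1}\,k\bar l}\,\varphi_k\wedge\overline{\varphi}_l$. Applying the symmetry in the unbarred pair gives $R^s_{n-1\,\overline{n-1}\,k\bar l}=R^s_{k\,\overline{n-1}\,n-1\,\bar l}$, which is a coefficient of the off-block entry $\Theta^s_{k,n-1}$; this entry vanishes for every $k\neq n-1$, killing all terms with $k\neq n-1$. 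Applying next the conjugate symmetry $R^s_{n-1\,\overline{n-1}\,n-1\,\bar l}=R^s_{n-1\,\bar l\,n-1\,\overline{n-1}}$, a coefficient of $\Theta^s_{n-1,l}$, which vanishes for $l\neq n-1$, leaves only the term $k=l=n-1$. Hence $d\alpha=f\,\varphi_{n-1}\wedge\overline{\varphi}_{n-1}$ with $f=R^s_{n-1\,\overline{n-1}\,n-1\,\overline{n-1}}$, and $f$ is real since $d\alpha$ is purely imaginary (as $\alpha+\overline{\alpha}=0$) while $\overline{\varphi_{n-1}\wedge\overline{\varphi}_{n-1}}=-\varphi_{n-1}\wedge\overline{\varphi}_{n-1}$. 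The identical argument with $n-1$ replaced by $n-2$ gives $d\beta=h\,\varphi_{n-2}\wedge\overline{\varphi}_{n-2}$ with $h$ real in the rank $2$ case.

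The step requiring the most care is the index bookkeeping in the symmetry argument of the last paragraph: I must apply the Kähler-like symmetry in exactly the convention under which each admissible exchange converts a coefficient of the diagonal block $\Theta^s_{n-1,n-1}$ into a coefficient of a genuinely off-diagonal block $\Theta^s_{k,n-1}$ or $\Theta^s_{n-1,l}$, for only those entries are known a priori to vanish. A safer but more computational alternative, should the symmetry route prove delicate, is to compute $d\alpha$ directly from the decomposition $\alpha=\theta^c_{n-1,n-1}+2\gamma_{n-1,n-1}$ with $\gamma_{n-1,n-1}=\lambda(\varphi_n-\overline{\varphi}_n)$, using the structure equations \eqref{dvarphi}--\eqref{dbarvarphi} together with \eqref{etaT} and \eqref{eq:P}; this bypasses the abstract curvature symmetries at the cost of a longer calculation.
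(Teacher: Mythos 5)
Your proposal is correct and follows essentially the same route as the paper: block-diagonality of $\theta^s$ (hence of $\Theta^s$) from the eigenvalue identity $(a_j-a_i)\,\theta^s_{ij}=0$ together with $\nabla^s X_{\eta}=0$, followed by the K\"ahler-like condition to reduce $d\alpha$ and $d\beta$ to a single real $(1,1)$-coefficient. The only cosmetic difference is that the paper invokes the K\"ahler-like condition in the matrix form ${}^t\varphi\,\Theta^s=0$ (giving $\varphi_{n-1}\wedge d\alpha=0$, killing the $(0,2)$- and hence $(2,0)$-parts and all off-$(n\!-\!1)$ entries of the Hermitian coefficient matrix), which is exactly the content of your two index-symmetry applications.
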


\begin{proof}
We already see that $\theta^s$ is block diagonal and thus $\Theta^s=d\theta^s-\theta^s\wedge \theta^s$ is also block diagonal. The only thing we need to verify here is the format of the curvature (\ref{eq:Theta}). To see this, note that $d\alpha$ is a $2$-form, satisfying $d\alpha + \overline{d\alpha}=0$, and so is $d\beta$. Since the Strominger connection $\nabla^s$ is K\"ahler-like, it means that $\,^t\!\varphi \,\Theta^s=0$, hence $\varphi_{n\!-\!1}\wedge d\alpha =0$. It yields that the $(0,2)$-part of $d\alpha$ must vanish, and since $d\alpha = - \overline{ d\alpha}$, its $(2,0)$-part also vanishes. For the $(1,1)$-part, write
$$ d\alpha = \sum_{i,j} A_{ij} \varphi_i \wedge \overline{\varphi}_j, $$
the coefficient matrix $A$ is Hermitian. The K\"ahler-like condition forces $A_{ij}=0$ for any $i\neq n\!-\!1$, and thus it has only one possibly non-zero element at the $(n\!-\!1,n\!-\!1)$-th position. Therefore $d\alpha$ takes the desired form. The same goes with $d\beta$. This completes the proof of the lemma.
\end{proof}

As to the $n=3$ case, it follows that

\begin{lemma}\label{lemma8}
Let $(M^n,g)$ be a non-K\"ahler SKL manifold of dimension $n\leq 3$. Then it has degenerate torsion.
\end{lemma}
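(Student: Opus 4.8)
The plan is to reduce the statement to a one-line computation with the trace of the Chern torsion. By Definition \ref{degtor_def}, having degenerate torsion means that in \emph{every} admissible frame $e$ one has $T^{\ast}_{ik}=0$ for all $i,k<n$; since the Chern torsion is skew-symmetric in its lower indices, $T^j_{ik}=-T^j_{ki}$, only pairs $\{i,k\}$ with $i\neq k$ can contribute. For $n=1$ the metric is automatically K\"ahler and is excluded by hypothesis, while for $n=2$ the only pair with both indices $<2$ is $(1,1)$, for which $T^j_{11}=0$ by skew-symmetry. Hence the entire content of the lemma lies in the case $n=3$, where the sole relevant components are $T^1_{12}$, $T^2_{12}$ and $T^3_{12}$.

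First I would fix an arbitrary admissible frame $e$. Admissibility gives $X_{\!\eta}=\lambda e_n$, so $\eta_i=0$ for $i<n$ and $\eta_n=\lambda>0$; feeding this into \eqref{etaT} yields $T^n_{ik}=0$ for all $i,k$, and in particular $T^3_{12}=0$ when $n=3$. It then remains to kill $T^1_{12}$ and $T^2_{12}$, and the key input for this is the standard trace identity for the Gauduchon torsion $1$-form coming from the structure equations of \cite{ZZ}, namely $\eta_k=\sum_i T^i_{ik}$.

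With $n=3$ the identity, evaluated at $k=2$ and $k=1$, reads
\begin{align*}
\eta_2 &= T^1_{12}+T^2_{22}+T^3_{32}=T^1_{12},\\
\eta_1 &= T^1_{11}+T^2_{21}+T^3_{31}=-T^2_{12},
\end{align*}
since $T^1_{11}=T^2_{22}=0$ by skew-symmetry and $T^3_{32}=T^3_{31}=0$ because $T^n_{ik}=0$. As $\eta_1=\eta_2=0$ in an admissible frame, this forces $T^1_{12}=T^2_{12}=0$. Together with $T^3_{12}=0$ and skew-symmetry, every component $T^j_{ik}$ with $i,k<3$ vanishes, so $(M^3,g)$ has degenerate torsion; since the frame was an arbitrary admissible one, the conclusion holds in the sense of Definition \ref{degtor_def}.

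I do not anticipate a genuine obstacle: note that neither the normality relation \eqref{eq:P} nor the diagonality of $\phi$ is actually used, only \eqref{etaT}, skew-symmetry, and the trace identity. The single point deserving care is conceptual rather than computational. The argument succeeds precisely because for $n\leq 3$ the space of horizontal $(2,0)$-forms, those built from $\varphi_1,\dots,\varphi_{n-1}$ alone, is at most one-dimensional (spanned by $\varphi_1\wedge\varphi_2$), so the two trace conditions $\eta_1=\eta_2=0$ already exhaust the horizontal torsion. For $n\geq 4$ there are strictly more horizontal components than available trace relations, which is exactly why degenerate torsion can fail and must be imposed as a separate hypothesis in higher dimensions.
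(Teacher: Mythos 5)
Your proposal is correct and follows essentially the same route as the paper's own proof: reduce to $n=3$, use the admissible frame to get $\eta_1=\eta_2=0$ and $T^3_{\ast\ast}=0$ via \eqref{etaT}, and then kill $T^1_{12}$ and $T^2_{12}$ with the trace identity $\eta_k=\sum_i T^i_{ik}$. The closing remark about why the count of horizontal components versus trace relations fails for $n\geq 4$ is a nice addition but not needed for the lemma.
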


\begin{proof}
 The $n=2$ case is automatic, so let us assume that $n=3$. Let $e$ be an admissible frame. It yield that $\eta_1=\eta_2=0$, $\eta_3 = \lambda$, and $T^3_{\ast \ast}=0$. What we need to show is that $T^j_{12}=0$ for $j=1,2$. It follows that
$$ 0 = \eta_2 = T^1_{12} + T^2_{22} + T^3_{32} = T^1_{12}. $$
Similarly, $T^2_{12}=0$. Therefore $M^3$ has degenerate torsion.
\end{proof}

If the two lemmata above are combined, we get the proof of Theorem \ref{thm6}:

\begin{proof}[\textbf{Proof of Theorem \ref{thm6}}]
Let $(M^n,g)$ be a non-K\"ahler SKL manifold with $n\leq 3$. The above lemma says that it has degenerate torsion, hence the values $\{ a_1, \ldots , a_{n-1}\}$ are given by: $a_1=\lambda$ when $n=2$, and either $a_1=0$, $a_2=\lambda$ or $a_1=\frac{\lambda}{2}(1+\rho)$, $a_2=\frac{\lambda}{2}(1-\rho)$ when $n=3$. In each case, the connection matrix $\theta^s$ is diagonal. Hence the curvature matrix $\Theta^s$ is also diagonal, with its $(i,i)$-entry given by
$ f_i \varphi_i \overline{\varphi}_i$ for some real function $f_i$, since $\,^t\!\varphi \,\Theta^s=0$. In particular, the Ricci curvature of $\nabla^s$ takes the form
$$ \mbox{tr}\Theta^s = \sum_{i=1}^n f_i \varphi_i \overline{\varphi}_i $$
where $n=2$ or $3$. Therefore when $\mbox{tr}\Theta^s=0$, we see that each $f_i=0$ and thus $\Theta^s=0$. This has completed the proof of Theorem \ref{thm6}.
\end{proof}

Note that for a K\"ahler manifold of dimension $2$ or higher, the vanishing of the Ricci curvature certainly does not imply the vanishing of the curvature. In order to generalize Theorem \ref{thm6} to higher dimensions, one needs to at least remove the K\"ahler de Rham factors (of dimension $\geq 2$) contained in the non-K\"ahler SKL manifold $M^n$.

Next, let us prove the main result of this paper, Theorem \ref{thm8} stated in the introduction section.

\begin{proof}[\textbf{Proof of Theorem \ref{thm8}}]
Let us start with a non-K\"ahler SKL manifold $(M^3,g)$. By our previous lemmata, there is a global holomorphic vector field $e_3$ on $M^3$, such that $X_{\!\eta} = \lambda e_3$ with $\lambda >0$. Also, we know that it has degenerate torsion, and locally there exists a unitary frame $e$ extending $e_3$, the so-called strictly admissible frame, such that $\phi$ is diagonal, with $\phi_1^1=\lambda a$, $\phi_2^2 =\lambda b$, where the two cases of the \emph{rank 1 and 2} as in Lemma \ref{lemma6} are divided:
\begin{enumerate}
\item\label{case1} $a=0, \quad b =\lambda$;
\item\label{case2} $a = \frac{\lambda}{2}(1+\rho),\quad b = \frac{\lambda}{2}(1-\rho)$,\quad for a globally defined constant $\rho $ with $|\rho|=1$ and $\mathrm{Im}(\rho)>0$.
\end{enumerate}
Note that $T^1_{13}=a$ and $T^2_{23}=b$ are the only non-zero components of the torsion tensor, and in both cases, we have $a\neq b$. The following lemma is actually a special case of Lemma \ref{lemma7}, for which the proof is omitted.

\begin{lemma}\label{lemma9}
Let $(M^3,g)$ be a non-K\"ahler SKL manifold. Then under a strictly admissible frame $e$, the connection matrix for the Strominger connection $\nabla^s$ is diagonal:
\begin{equation}
 \theta^s = \left[ \begin{array}{ccc} \sigma_1 & 0 & 0 \\ 0 & \sigma_2 & 0  \\  0 & 0 & 0  \end{array} \right]
 \end{equation}
 where $\sigma_1 + \overline{\sigma}_1=0$ and  $\sigma_2 + \overline{\sigma}_2=0$.
\end{lemma}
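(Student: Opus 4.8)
The plan is to read off the three asserted properties directly from facts already established, specializing the general block-diagonalization discussion preceding Lemma \ref{lemma7} to the case $n=3$ where every block is $1\times 1$. First I would use Theorem \ref{thm2}: the vector field $X_{\!\eta}=\lambda e_3$ is globally defined and parallel with respect to $\nabla^s$, so $\nabla^s e_3=0$. By our choice of strictly admissible frame this is precisely the statement that the last row and column of the connection matrix vanish, $\theta^s_{3j}=\theta^s_{j3}=0$ for every $j$, and in particular $\theta^s_{33}=0$. This disposes of the third row and column of the claimed matrix.

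Next I would eliminate the remaining off-diagonal entries $\theta^s_{12}$ and $\theta^s_{21}$. The key input is the relation $(a_j-a_i)\,\theta^s_{ij}=0$ derived just before Lemma \ref{lemma7} from $\nabla^sT=0$, by setting $k=n$ in the parallelness identity $T^j_{ik,\ell}=0$ together with its conjugate, where $a_i=T^i_{in}$. In dimension $3$ these torsion components are $a_1=a$ and $a_2=b$, and, as recorded just above the statement, in both the rank $1$ case ($a=0$, $b=\lambda$) and the rank $2$ case ($a=\frac{\lambda}{2}(1+\rho)$, $b=\frac{\lambda}{2}(1-\rho)$ with $|\rho|=1$ and $\mathrm{Im}(\rho)>0$) one has $a\neq b$. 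Hence $\theta^s_{12}=\theta^s_{21}=0$, and combined with the previous step the matrix is diagonal with entries $\sigma_1:=\theta^s_{11}$, $\sigma_2:=\theta^s_{22}$ and $\theta^s_{33}=0$.

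Finally, the skew conditions $\sigma_1+\overline{\sigma}_1=0$ and $\sigma_2+\overline{\sigma}_2=0$ follow from metric compatibility: since $\nabla^s$ is a Hermitian connection and $e$ is a unitary frame, differentiating $h(e_i,e_j)=\delta_{ij}$ gives $\theta^s_{ij}+\overline{\theta^s_{ji}}=0$, so the connection matrix is skew-Hermitian and every diagonal entry is purely imaginary. The whole argument is a bookkeeping specialization of Lemma \ref{lemma7}, so I do not expect a serious obstacle; the only point that genuinely must be checked—and the crux of the collapse to a diagonal form—is that $a\neq b$ holds in \emph{both} cases of Lemma \ref{lemma6}. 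Were the two eigenvalues of $\phi$ to coincide, the relation $(a_j-a_i)\,\theta^s_{ij}=0$ would no longer force the off-diagonal $2\times2$ block to vanish, exactly as happens for the nontrivial $\star$-blocks carrying the K\"ahler factors in the higher-dimensional Lemma \ref{lemma7}.
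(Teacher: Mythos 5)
Your proof is correct and follows essentially the same route as the paper, which omits the proof of this lemma precisely because it is the $n=3$ specialization of the block-diagonalization discussion preceding Lemma \ref{lemma7}: the vanishing of the third row and column from $\nabla^s X_{\!\eta}=0$, the relation $(a_j-a_i)\,\theta^s_{ij}=0$ together with the observation (stated explicitly in the paper just before the lemma) that $a\neq b$ in both the rank $1$ and rank $2$ cases, and skew-Hermitian symmetry of $\theta^s$ in a unitary frame. You have correctly identified $a\neq b$ as the crux, and your verification of it in both cases matches the paper's.
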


As a consequence, the connection matrix for the Riemannian connection $\nabla$ follows:

\begin{lemma}\label{lemma10}
Let $(M^3,g)$ be a non-K\"ahler SKL manifold. Then under a strictly admissible frame $e$, the Riemannian connection $\nabla$ takes the form:
\begin{eqnarray}
\nabla e_1 & = & \sigma_1' e_1 - \overline{a\varphi}_1 e_3 + a \,\overline{\varphi}_1 \overline{e}_3  \label{eq:e1}\\
\nabla e_2 & = & \sigma_2' e_2 - \overline{b\varphi}_2 e_3 + b \,\overline{\varphi}_2 \overline{e}_3 \label{eq:e2}\\
\nabla e_3 & = & a \varphi_1 e_1 + b \varphi_2 e_2  - a \overline{\varphi}_1 \overline{e}_1 - b \overline{\varphi}_2 \overline{e}_2 \label{eq:e3}
 \end{eqnarray}
 where $\sigma_1'= \sigma_1 - a\varphi_3 + \overline{a\varphi}_3$ and  $\sigma_2'= \sigma_2 - b\varphi_3 + \overline{b\varphi}_3$.
\end{lemma}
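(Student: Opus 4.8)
The plan is to feed three ingredients into the single master formula relating the Levi-Civita connection to the Strominger connection. Recall from the lemma following Theorem~\ref{thm2} that, under any unitary frame,
\[ \nabla e_r = \nabla^s e_r - \sum_k \gamma_{rk} e_k + \sum_{k,i} T^i_{rk}\,\overline{\varphi}_i\,\overline{e}_k, \qquad \gamma_{ij} = \sum_k \big\{ T^j_{ik}\varphi_k - \overline{T^i_{jk}}\,\overline{\varphi}_k \big\}. \]
The first ingredient is Lemma~\ref{lemma9}, which says that under a strictly admissible frame $\nabla^s e_1 = \sigma_1 e_1$, $\nabla^s e_2 = \sigma_2 e_2$ and $\nabla^s e_3 = 0$, so the $\nabla^s$-part feeds only into the diagonal. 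The second ingredient is the normalization already fixed in this proof of Theorem~\ref{thm8} (see also Remark~\ref{sadmissibleT}): the only non-vanishing components of the Chern torsion are $T^1_{13}=a$ and $T^2_{23}=b$, together with their images $T^1_{31}=-a$, $T^2_{32}=-b$ under the lower-index antisymmetry $T^j_{ik}=-T^j_{ki}$. Everything then reduces to evaluating the $\gamma_{rk}$ explicitly and assembling.

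First I would compute the $\gamma_{1k}$. Among the $T^1_{1\ast}$ only $T^1_{13}=a$ survives, giving $\gamma_{11}=a\varphi_3-\overline{a\varphi}_3$; one checks $\gamma_{12}=0$, and using $T^1_{31}=-a$ one finds $\gamma_{13}=\overline{a\varphi}_1$. The anti-holomorphic term $\sum_{k,i}T^i_{1k}\overline{\varphi}_i\overline{e}_k$ has the single surviving summand $a\,\overline{\varphi}_1\,\overline{e}_3$. Collecting these and absorbing the diagonal correction $-\gamma_{11}e_1$ into $\nabla^s e_1=\sigma_1 e_1$ produces exactly $\nabla e_1=\sigma_1' e_1-\overline{a\varphi}_1 e_3+a\,\overline{\varphi}_1\,\overline{e}_3$ with $\sigma_1'=\sigma_1-a\varphi_3+\overline{a\varphi}_3$, which is \eqref{eq:e1}. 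The derivation of \eqref{eq:e2} is verbatim the same with $(1,a)$ replaced by $(2,b)$. For \eqref{eq:e3} the diagonal $\nabla^s$-part drops out entirely; since $T^3_{\ast\ast}=0$, each $\gamma_{3k}$ collapses to $\sum_m T^k_{3m}\varphi_m$, yielding $\gamma_{31}=-a\varphi_1$, $\gamma_{32}=-b\varphi_2$, $\gamma_{33}=0$, while the anti-holomorphic term contributes $-a\,\overline{\varphi}_1\overline{e}_1-b\,\overline{\varphi}_2\overline{e}_2$ from $T^1_{31}=-a$ and $T^2_{32}=-b$. Assembling gives \eqref{eq:e3}.

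This is bookkeeping rather than a conceptual argument, so there is no genuine obstacle; the only place demanding care is the consistent use of the lower-index antisymmetry of the Chern torsion, which is precisely what converts $T^1_{13}=a$ into the factor $T^1_{31}=-a$ entering both $\gamma_{13}$ and the $\overline{e}_1$-term of $\nabla e_3$, together with the clean separation of the purely diagonal correction $\gamma_{ii}$ whose combination with $\sigma_i$ defines $\sigma_i'$. As a sanity check one verifies $\sigma_i'+\overline{\sigma_i'}=0$: this follows from $\sigma_i+\overline{\sigma}_i=0$ and the fact that the added pieces $-a\varphi_3+\overline{a\varphi}_3$ and $-b\varphi_3+\overline{b\varphi}_3$ are purely imaginary, consistent with $\nabla$ being a metric connection.
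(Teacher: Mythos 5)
Your proposal is correct and follows essentially the same route as the paper: the authors likewise deduce the three identities from $\nabla e = \theta^1 e + \overline{\theta^2}\,\overline{e}$ with $\theta^1 = \theta^s - \gamma$, using Lemma \ref{lemma9} for the diagonal $\theta^s$ and the fact that $T^1_{13}=a$, $T^2_{23}=b$ are the only nonzero torsion components. You have simply carried out explicitly the bookkeeping that the paper's one-line proof leaves to the reader, and your entries for $\gamma_{rk}$ and the anti-holomorphic part all check out.
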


\begin{proof}
The components of $T$ give us the expression for $\gamma$ and $\theta^2$. From $\nabla e = \theta^1e+ \overline{\theta^2} \overline{e}$ and $\theta^1=\theta^s-\gamma$, the above identities are established.
\end{proof}

Let us first analyze the case $a=0$. In this case, the distribution $E$ generated by $\{ e_1, \overline{e}_1\}$, which is globally defined, as it is contained in the eigenspace of $\phi$ with respect to the eigenvalue $0$ and orthogonal to $e_3$ and $\overline{e}_3$. From the first equation in Lemma \ref{lemma10}, we see that $E$ is parallel with respect to the Riemannian connection. Therefore if $M^3$ is complete, then its universal cover will split off a de Rham factor which is a K\"ahler curve and the other factor is a non-K\"ahler SKL surface.

Then let us concentrate on the case $ab \neq 0$. It is easy to verify that $\frac{a}{|a|}=i\frac{b}{|b|}$ in this case and we want to see a de Rham splitting into two Sasakian $3$-manifolds. For this purpose we need to identify the Reeb vector fields $\xi$ and $\xi'$. By (\ref{eq:e3}), we can form the global real vector fields with unit length as
$$ \xi = \frac{i}{\sqrt{2}|a|} ( a \overline{e}_3 -\overline{a} e_3), \ \ \ \ \xi' = \frac{i}{\sqrt{2}|b|} ( b \overline{e}_3 -\overline{b} e_3). $$
It is easy to check that $J\xi = \xi'$ and
\begin{eqnarray}
\nabla \xi & = & \sqrt{2} |a| i ( - \varphi_1 e_1 + \overline{\varphi}_1 \overline{e}_1)\\
\nabla \xi' & = & \sqrt{2} |b| i ( - \varphi_2 e_2 + \overline{\varphi}_2 \overline{e}_2)
\end{eqnarray}
In the mean time, by (\ref{eq:e1}) and (\ref{eq:e2}), it yields that
\begin{equation}\label{eq:e1e2}
\nabla e_1 = \sigma_1' e_1- i \sqrt{2}|a| \overline{\varphi}_1 \xi , \ \ \ \ \
\nabla e_2 = \sigma_2'e_2 - i \sqrt{2}|b| \overline{\varphi}_2 \xi'.
\end{equation}
Write $e_1=\frac{1}{\sqrt{2}}(Y-iZ)$ and $e_2=\frac{1}{\sqrt{2}}(Y'-iZ')$ and denote by $E$, $E'$ the distributions spanned by $\{ Y, Z, \xi\}$, $\{ Y', Z', \xi'\}$ respectively. Note that $E$ is globally defined, as $\mbox{span}\{ e_1 , \overline{e}_1\}$ is an eigenspace of $\phi$ and $e_3$ is clearly a global vector field. Similarly, $E'$ is also globally defined. The above equations says that both $E$ and $E'$ are parallel distributions with respect to the Riemannian connection $\nabla$ of $M^3$. Hence if $M^3$ is complete, they will give a de Rham decomposition on the universal cover level and each factor is a Sasakian $3$-manifold.

The $n=2$ case can be argued similarly. This completes the proof of Theorem \ref{thm8}.
\end{proof}

\begin{proof}[\textbf{Proof of Theorem \ref{thm9}}]
Let $(M^3,g)$ be a Hermitian manifold that is not balanced and its Strominger connection $\nabla^s$ has parallel torsion. We need to show that it always satisfies the LP condition in the sense of Belgun, namely, its torsion $1$-form $\eta$ obeys the equations
$$ \partial \eta = 0, \quad \partial \omega = c \eta \, \partial \overline{\eta}. $$
Fix any point $x\in M$ and let $e$ be a local unitary frame such that $\theta^s$ vanishes at $x$. Then at $x$ we have $\theta = -2\gamma$. Hence, by the equations \eqref{dvarphi} and \eqref{dbarvarphi}, it yields that $\partial \varphi = -\tau$ and $\overline{\partial}\varphi = -2 \overline{\gamma'} \,\varphi$ at $x$. The first Bianchi identity says that $d\tau = -\,^t\!\theta \tau + \,^t\!\Theta \varphi$, and taking the $(3,0)$-part at $x$, we get $\partial \tau = 2 \,^t\!\gamma' \tau $, which under the assumption $\nabla^sT=0$ leads us to the following equality
$$ \sum_r \{ T^r_{ij} T^{\ell}_{rk} + T^r_{ki} T^{\ell}_{rj} + T^r_{jk} T^{\ell}_{ri} \} = 0$$
for any indices. Take $\ell = k$ and sum up, which yields that $\sum_r \eta_r T^r_{ij}=0$ for any $i$, $j$.  Again at the point $x$, it follows that
$$ \partial \eta = \eta_r \partial \varphi_r = - \eta_r T^r_{ik} \varphi_i \varphi_k =0,$$
since $\sum_r \eta_rT^r_{ik}=0$. The first equality in the above line is due to $\nabla^sT=0$, which implies that $\nabla^s \eta =0$. Similarly, it yields that, at $x$,
$$ \partial \overline{\eta} = \sum_r \overline{\eta}_r \partial \overline{\varphi}_r = 2 \sum_{r,i,j} \overline{\eta}_r T^j_{ir} \varphi_i \overline{\varphi}_j. $$
Without loss of generality, we may assume that the frame $e$ at $x$ enjoys the property that $\eta_3\neq 0$ and $\eta_1=\eta_2=0$. Hence, at this point $x$, the equalities $T^3_{\ast \ast }=0$ and $T^{\ast }_{12}=0$ are established. The former is due to the equality $\sum_r \eta_rT^r_{ij}=0$ for any $i,j$, and the latter results from
$$ 0 = \eta_1 = T^2_{21} + T^3_{31} = T^2_{21},$$
with $T^1_{12}=0$ similarly shown. Based on this, it yields that, at $x$,
$$ \eta \partial \overline{\eta} = 2 |\eta_3|^2  \varphi_3 \sum_{i,j=1}^2  T^j_{i3} \varphi_i \overline{\varphi}_j  . $$
On the other hand, at $x$, it follows that
\[\begin{aligned}
-\sqrt{\!-\!1} \partial \omega & =\, ^t\!\tau \,\overline{\varphi} = \sum_{i,j,k} T^j_{ik} \varphi_i \varphi_k \overline{\varphi}_j \\
& =  2(T^j_{13} \varphi_1\varphi_3 + T^j_{23} \varphi_1\varphi_3)\overline{\varphi}_j \\
& = -2 \varphi_3 \sum_{i,j=1}^2 T^j_{i3} \varphi_i \overline{\varphi}_j.
\end{aligned}\]
Therefore, the LP condition is satisfied. This completes the proof of Theorem \ref{thm9}.
\end{proof}

\begin{proof}[\textbf{Proof of Theorem \ref{thm10}}]
For a non-K\"ahler SKL manifold, by \cite[Lemma 15]{ZZ}, it yields that $\eta$ satisfies
\[\partial \eta=0,\quad \partial \overline{\eta} = 2 \phi^j_i \varphi_i\overline{\varphi}_j,\]
under any unitary frame. When the admissible frame $e$ is applied, it follows that the matrix $(\phi^j_i)$ is diagonal, and by Remark \ref{admissibleT},
\[T^n_{\ast \ast}=0, \quad T^j_{in}=0\quad \text{for}\quad i \neq j.\]
It yields that
$$ \eta  \partial \overline{\eta}  = \lambda \varphi_n \wedge 2 \sum_{i=1}^{n-1} \lambda a_i \varphi_i \overline{\varphi}_i ,$$
where $\phi^i_i=\lambda a_i $ and $a_i$ is a globally defined constant. Similarly, it can be shown that
$$ -\sqrt{\!-\!1} \partial \omega = \, ^t\!\tau \, \overline{\varphi}
= \sum_{i,j,k} T^j_{ik} \varphi_i\varphi_k \overline{\varphi}_j
=\sum_{\begin{subarray}{c} i<k \\ j <n \end{subarray}}2T^j_{ik} \varphi_i\varphi_k \overline{\varphi}_j. $$
If the metric satisfies the LP condition, it follows that $\partial \omega = c \eta  \partial \overline{\eta} $ for some non-zero constant $c$, hence, the above formula implies that for any $j<n$, $T^j_{ik}=0$ unless $(i, k) = (j,n)$. In particular, $T^{\ast }_{ik}=0$ for any $i,k<n$. Conversely, if we have the degenerate torsion, it follows from Remark \ref{degenerateT} that the only possibly non-zero components of the torsion $T$ are $T^i_{in}$ for $i \leq n-1$ under any admissible frame $e$, and thus $\partial \omega$ is a non-zero constant multiple of $\eta \partial \overline{\eta}$, which is exactly the LP condition.

Let $(M^n,g)$ be a complete non-K\"ahler SKL manifold with degenerate torsion. We will show that its universal cover always splits off a K\"ahler de Rham factor, of complex codimension either $2$ or $3$. Under a strictly admissible frame $e$, the matrix $\phi$ is diagonal, which takes the special form as in Lemma \ref{lemma6}, while the connection matrix $\theta^s$ of $\nabla^s$ is block diagonal and takes the form as in Lemma \ref{lemma7}. As the notations between Lemma \ref{lemma6} and Definition \ref{sadmb}, $E \oplus \overline{E}$ is the distribution in $M$ spanned by $\{ e_1, \ldots e_{n-2};  \overline{e}_1, \ldots , \overline{e}_{n-2}\}$ in the rank $1$ case, or by $\{ e_1, \ldots e_{n-3}; \overline{e}_1, \ldots , \overline{e}_{n-3} \}$ in the rank $2$ case, with $m$ being $n-2$ in the former case and $n-3$ in the latter case. We claim that $E \oplus \overline{E}$ is parallel under the Riemannian connection $\nabla$.

By Remark \ref{sadmissibleT}, the non-zero components of the torsion are $T^i_{in}$ for $m<i<n$ under an strictly admissible frame. In particular, for any $i\leq m$, it yields that $\gamma_{ij}=0$ and $\theta^2_{ij}=0$ for any $j$. Therefore, for any $i\leq m$, it follows that
$$ \nabla e_i = \theta^1_{ij}e_j + \overline{\theta^2_{ij}} \overline{e}_j = (\theta^s_{ij} -\gamma_{ij}) e_j + \overline{\theta^2_{ij}} \overline{e}_j = \theta^s_{ij} e_j \in E, $$
since the connection matrix $\theta^s$ of $\nabla^s$ is block diagonal and takes the form as in Lemma \ref{lemma7}.
This gives us the desired de Rham splitting, and in the factor giving by $E \oplus \overline{E}$, the metric is K\"ahler, since the torsion vanishes there. Therefore we have completed the proof of Theorem \ref{thm10}.
\end{proof}

\vsv
\vsv

\noindent\textbf{Acknowledgments.} The second named author is grateful to the Mathematics Department of Ohio State University for the nice research environment and the warm hospitality during his stay. The third named author would like to thank his collaborators Qingsong Wang and Bo Yang in their previous works, which laid the foundation for the computation carried out in the present paper. We also want to thank the referees for helpful suggestions and comments which improved the readability of the manuscript.

\vs


\begin{thebibliography}{99}
\bibitem {AI} B. Alexandrov and S. Ivanov,  \emph{Vanishing theorems on Hermitian manifolds,}
Diff. Geom. Appl. {\bf 14} (2001),  251-265.

\bibitem  {AOUV}  D. Angella, A. Otal, L. Ugarte, R. Villacampa,  \emph{On Gauduchon connections with K\"ahler-like curvature,} arXiv:1809.02632, to appear in Comm. Anal. Geom.

\bibitem  {AU}  D. Angella and L. Ugarte,   \emph{Locally conformal Hermitian metrics on complex non-K\"ahler manifolds,} Mediterr. J. Math. {\bf 13} (2016) 2105-2145.


\bibitem  {Belgun} F. Belgun, \emph{On the metric structure of non-K\"ahler complex surfaces,}  Math. Ann. {\bf 317} (2000), 1-40.

\bibitem  {Belgun1} F. Belgun, \emph{Normal CR structures on $S^3$,}  Math. Z. {\bf 244} (2003), 125-151.

\bibitem  {Belgun2} F. Belgun, \emph{On the metric structure of some non-K\"ahler complex threefolds,}  arXiv: 1208.4021


\bibitem  {Bismut} J.-M. Bismut, \emph{A local index theorem for non-K\"ahler manifolds,}  Math. Ann. {\bf 284} (1989), no. 4, 681-699.

\bibitem {Boothby} W. Boothby, \emph{Hermitian manifolds with zero curvature,}
Michigan Math. J. {\bf 5} (1958), no. 2, 229-233.

\bibitem {EFV}  N. Enrietti, A. Fino, and L. Vezzoni, \emph{Tamed symplectic forms and strong K\"ahler
with torsion metrics,} J. Symplectic Geom. {\bf 10} (2012), no. 2, 203-223.


\bibitem {FinoTomassini} A. Fino and A. Tomassini, \emph{A survey on strong KT structures,} Bull. Math. Soc. Sci. Math. Roumanie, Tome 52 (100) No. 2, 2009, 99-116.

\bibitem {FV}  A. Fino, and L. Vezzoni, \emph{On the existence of balanced and SKT metrics on nilmanifolds,} Proc. Amer. Math. Soc., {\bf 144} (2016), no.6, 2455-2459.

\bibitem {Fu} J-X Fu,  {\em On non-K\"ahler Calabi-Yau threefolds with balanced metrics.} Proceedings of the International
Congress of Mathematicians. Volume II, 705-716, Hindustan Book Agency, New Delhi, 2010.



\bibitem {Fu-Li-Yau} J-X Fu, J. Li, and S-T Yau, {\em Constructing balanced metrics on some families of non-K\"ahler Calabi-Yau
threefolds.} J. Differential Geom. {\bf 90} (2012), no. 1, 81-129.


\bibitem {Fu-Wang-Wu} J-X Fu, Z. Wang, and D. Wu, {\em Form type Calabi-Yau equaitons.} Math. Res. Lett., {\bf 17} (2010), 887-903.

\bibitem {Fu-Wang-Wu1} J-X Fu, Z. Wang, and D. Wu, {\em Semilinear equaitons, the $\gamma_k$ function, and generalized Gaudchon metrics.} J. Eur. Math. Soc., {\bf 15} (2013), 659-680.

\bibitem {Fu-Yau} J-X Fu and S-T Yau,  {\em The theory of superstring with flux on non-K\"ahler manifolds and the complex
Monge-Amp\`ere equation.} J. Differential Geom. {\bf 78} (2008), no. 3, 369-428.


\bibitem {Fu-Zhou} J-X Fu and X. Zhou, \emph{Scalar curvatures in almost Hermitian geometry and some applications,} arXiv: 1901.10130


\bibitem  {GatesHR} S.J. Gates, C.M. Hull and M. Ro\u{c}ek, \emph{Twisted multiplets and new
supersymmetric nonlinear sigma models,} Nuc. Phys. B {\bf 248} (1984), 157-186.

\bibitem {Gauduchon} P. Gauduchon, \emph{La {$1$}-forme de torsion d'une
vari\'et\'e hermitienne compacte.} Math. Ann. {\bf 267} (1984), no. 4,
495-518.

\bibitem {Gauduchon1} P. Gauduchon, \emph{Hermitian connnections and Dirac operators,}
Boll. Un. Mat. It. {\bf 11-B} (1997) Suppl. Fasc., 257-288.

\bibitem  {Gray} A. Gray, \emph{Curvature identities for Hermitian and almost Hermitian manifolds,}  Tohoku Math. J. {\bf 28}
(1976), no. 4, 601-612.

\bibitem  {IvanovP}  S. Ivanov and G. Papadopoulos, \emph{Vanishing theorems and string backgrounds,}
Classical Quantum Gravity {\bf 18} (2001), 1089-1110.

\bibitem {KYZ} G. Khan, B. Yang, and F. Zheng, \emph{The set of all orthogonal complex strutures on the flat $6$-torus,} Adv. Math. {\bf 319} (2017), 451-471.

\bibitem {Kodaira} K. Kodaira, \emph{On the structure of compact complex analytic spaces I,}  Am. J. Math. {\bf 86} (1964),
751-798; II, ibid. {\bf 88} (1966), 682-721; III, ibid. {\bf 90} (1969), 55-83; IV, ibid. {\bf 90} (1969), 1048-1066.

\bibitem {Li-Yau} J. Li and S.-T. Yau, \emph{The existence of supersymmetric string theory with torsion,} J. Differential Geom. \textbf{70}
(2005), no. 1, 143--181.

\bibitem {Liu-Yang}  K-F Liu and X-K Yang, \emph {Geometry of Hermitian manifolds,} Internat. J. Math. {\bf 23} (2012) (40 page)

\bibitem {Liu-Yang1} K-F Liu and X-K Yang, \emph {Ricci cuvratures on Hermitian manifolds,}  Trans. Amer. Math. Soc. {\bf 369} (2017), no. 7, 5157-5196.

\bibitem {Liu-Yang2} K-F Liu and X-K Yang, \emph {Hermitian harmonic maps and non-degenerate curvatures,}  Math. Res. Lett. {\bf 21} (2014), no. 4, 831-862.

\bibitem {Matsuo} K. Matsuo,  \emph { Astheno-K\"ahler structures on Calabi-Eckmann manifolds,} Colloqium Math. {\bf 115} (2009), 33-39.


\bibitem {OV} L. Ornea and M. Verbitsky,  \emph {Locally conformally K\"ahler  manifolds with potential,}  Math. Ann. {\bf 348} (2010), 25-33.

\bibitem {Popovici}   D. Popovici, \emph{Limits of projective manifolds under holomorphic deformations: Hodge numbers and strongly Gauduchon metrics,} Invent. Math. {\bf 194} (2013), no. 3, 515-534.

\bibitem {Sal} S. Salamon, \emph{Complex structures on nilpotent Lie algebras,} J. Pure Appl. Algebra {\bf 157} (2001), 311-333.

\bibitem {S} H. Samelson, \emph{A class of complex analytic manifolds, } Portugaliae Math. {\bf 12} (1953) 129-132.

\bibitem  {S18} J. Streets, \emph{Pluriclosed flow and the geometrization of complex surfaces,} arXiv:1808.09490



\bibitem {Strominger} A. Strominger, \emph{Superstrings with Torsion,} Nuclear Phys. B {\bf 274} (1986), 253-284.

\bibitem {STW} G. Sz\'ekelyhidi, V. Tosatti and B. Weinkove, \emph {Gauduchon metrics with prescribed volume form,}  Acta Math. {\bf 219} (2017), no. 1, 181-211.

\bibitem {Tosatti} V. Tosatti,  \emph { Non-K\"ahler Calabi-Yau manifolds,} Analysis, complex geometry, and mathematical physics: in honor of Duong H. Phong, 261-277, Contemp. Math., 644, Amer. Math. Soc., Providence, RI, 2015. arXiv: 1401.4797.

\bibitem  {Tseng-Yau} L.-S. Tseng and S.-T. Yau, \emph{Non-K\"ahler Calabi-Yau
manifolds.} String-Math 2011, 241-254, Proc. Sympos. Pure Math.,
\textbf{85}, Amer. Math. Soc., Providence, RI, 2012.




\bibitem {VYZ} L. Vezzoni, B. Yang, and F. Zheng, \emph{ Lie groups with flat Gauduchon connections,}   Math. Zeit. {\bf 293} (2019), 597-608.

\bibitem {Wang} H.-C. Wang, \emph{Closed manifolds with homogeneous complex structure,} Amer. J. Math. {\bf 76}
(1954) 1-32.

\bibitem  {WYZ} Q. Wang, B. Yang, and F. Zheng, \emph{On Bismut flat manifolds,} Trans. Amer. Math. Soc. {\bf 373} (2020) no.8, 5747-5772.

\bibitem  {YZ} B. Yang and F. Zheng, \emph{On curvature tensors of Hermitian manifolds,} Comm. Anal. Geom. {\bf 26} (2018), no.5, 1193-1220.

\bibitem {YZ1} B. Yang and F. Zheng, \emph{On compact Hermitian manifolds with flat Gauduchon conmnections,} Acta Math. Sinica (English Series). {\bf 34} (2018), 1259-1268.

\bibitem  {Yano} K. Yano. \emph{Differential geometry on complex and almost complex spaces.} International Series of Monographs in Pure and Applied Mathematics, Vol \textbf{49}, A Pergamon Press Book. 1965.

\bibitem {ZZ} Q. Zhao and F. Zheng, \emph{Strominger connection and pluriclosed metrics,} arXiv:1904.06604

\bibitem {ZZ1} Q. Zhao and F. Zheng, \emph{Complex nilmanifolds and K\"ahler-like connections,}  J. Geom. Phys. {\bf 146} (2019), 103512

\bibitem {Zheng} F. Zheng, \emph{Complex differential geometry.} AMS/IP Studies in Advanced Mathematics, 18. American
Mathematical Society, Providence, RI; International Press, Boston, MA, 2000.

\bibitem {Zheng1} F. Zheng, \emph{Some recent progress in non-K\"ahler geometry,} Sci. China Math., {\bf 62} (2019), no.11, 2423-2434.

\end{thebibliography}
\end{document}